\documentclass[11pt]{article}

\usepackage{amsfonts}
\usepackage{amsthm}
\usepackage{amsmath}
\usepackage{amssymb}
\usepackage{amscd}
\usepackage{mathrsfs}
\usepackage[mathscr]{eucal}
\usepackage{graphicx}
\usepackage{epstopdf} 
\usepackage{pict2e}
\usepackage{epic}
\usepackage{xcolor}
\usepackage{float}
\usepackage{mathtools}
\usepackage{centernot}
\usepackage{bbm}

\usepackage{cite}
\usepackage{hyperref}
\hypersetup{colorlinks=true, urlcolor= black, linkcolor=black, citecolor=black}

\usepackage[utf8]{inputenc}
\usepackage[T1]{fontenc}
\usepackage{lmodern}
\usepackage{dsfont}
\usepackage{indentfirst}

\numberwithin{equation}{section}

\theoremstyle{plain}
\newtheorem{Thm}{Theorem}[section]
\newtheorem{Lem}[Thm]{Lemma}
\newtheorem{Coro}[Thm]{Corollary}
\newtheorem{Prop}[Thm]{Proposition}

\theoremstyle{definition}

\newtheorem{Rem}[Thm]{Remark}

\usepackage[margin=1 in]{geometry}

\newcommand\n{\mathbf{n}}
\newcommand\m{\mathbf{m}}

\newcommand\fg{\mathfrak{g}}
\newcommand{\connect}{\xleftrightarrow}

\newcommand{\Z}{\mathbb Z}
\newcommand{\N}{\mathbb N}

\newcommand{\lr}[4]{#3\xleftrightarrow[#1]{#2} #4}
\newcommand{\nlr}[4]{#3\mathrel{\mathop{\centernot\longleftrightarrow}_{#1}^{#2}} #4}

\title{Supercritical sharpness for Ising double random currents}

\begin{document}

\author{Trishen S. Gunaratnam\footnote{University College London, \url{trishen.gunaratnam@ucl.ac.uk}}\:, Christoforos Panagiotis\footnote{University of Bath, \url{cp2324@bath.ac.uk}}\:, Romain Panis\footnote{Université Lyon 1, 
\url{panis@math.univ-lyon1.fr}}\:, Franco Severo\footnote{CNRS and Sorbonne Université, \url{severo@lpsm.paris}}}
\date{}
\maketitle

\begin{abstract}
	We show that the percolation model induced by the double random current representation of the Ising model on $\mathbb Z^d$, with $d\geq 3$, satisfies local uniqueness of the macroscopic cluster with high probability for every $\beta>\beta_c$, uniformly in boundary conditions (i.e.~sources). Such a result is sometimes called supercritical sharpness in percolation theory and yields a detailed description of the model via standard renormalisation techniques. As applications, we show the exponential decay of truncated correlations for the Ising model and exponential mixing for the FK-Ising measure, results previously obtained by Duminil-Copin, Goswami and Raoufi. However, our proof is considerably simpler, as it replaces the use of a highly technical multi-valued map principle with a simple sprinkling argument. 
\end{abstract}

\section{Introduction}

The Ising model is one of the most studied models of statistical mechanics. We refer to \cite{FriedliVelenikIntroStatMech2017,DuminilLecturesOnIsingandPottsModels2019} for mathematical introductions and to \cite{DuminilreviewIsing} for a review. In dimensions $d\geq 2$, the model undergoes a \emph{phase transition} \cite{PeierlsIsing1936} at a \emph{critical parameter} $\beta_c$, which separates a \emph{disordered} or \emph{subcritical} phase $\beta<\beta_c$ from an \emph{ordered} or \emph{supercritical} one. While the model owes most of its physical relevance to the richness of its (near-)critical regime, the study of its off-critical behaviour has also been the subject of extensive work. A notable result is the proof of exponential decay of the two-point function for $\beta<\beta_c$ \cite{AizenmanBarskyFernandezSharpnessIsing1987} (see also \cite{DuminilTassionNewProofSharpness2016,duminil2019sharp} for alternative proofs), a property sometimes referred to as \emph{subcritical sharpness}. The supercritical regime is considerably more delicate to analyse: exponential decay of truncated correlations or \emph{supercritical sharpness} has recently been established by Duminil-Copin, Goswami, and Raoufi \cite{DGR20}, building on the supercritical sharpness result of Bodineau \cite{Bod05} for the closely related \emph{random cluster} (or \emph{Fortuin--Kasteleyn} (FK)) representation of the Ising model. 

Another geometric representation, central to several developments on the model (see e.g.~\cite{AizenmanGeometricAnalysis1982,AizenmanDuminilSidoraviciusContinuityIsing2015,AizenmanDuminilTriviality2021}), is the so-called \emph{random current representation}. First introduced by Griffiths, Hurst, and Sherman \cite{GriffithsHurstShermanConcavity1970} to prove concavity of the magnetisation in the presence of an external magnetic field, the full power of this representation emerged in the seminal work of Aizenman \cite{AizenmanGeometricAnalysis1982}. What makes it useful is a combinatorial identity called the \emph{switching lemma}, which provides probabilistic interpretations to product and differences of correlation functions of the Ising model. In particular, this identity offers a simple probabilistic interpretation of the truncated correlation of the model in terms of its \emph{double random current representation} (see Corollary \ref{coro:expression truncated switching}), suggesting that a detailed description of its supercritical behaviour could have many important consequences.

The purpose of this work is to study the supercritical regime of the percolation model induced by the double random current model. We prove a supercritical sharpness result for it, in the sense that \emph{local uniqueness} occurs with high probability, uniformly in the ``boundary conditions'' (which, in this context, are given by \emph{sources}). As a consequence, we obtain a simpler proof of the main result of \cite{DGR20}, and derive several statements of independent interest. 

Before stating the results of the paper, let us define the main protagonists here: the Ising model and its associated random current representation. 
We start by introducing some notation for subgraphs of $\mathbb{Z}^d$. Let $E(\mathbb Z^d):=\{xy: | x-y|_1=1\}$ where $|\cdot|_1$ denotes the $\ell^1$ norm on $\mathbb R^d$. We write $x\sim y$ when $xy\in E(\mathbb Z^d)$. If $\Lambda\subset \mathbb Z^d$, we define $E(\Lambda):=\{xy\in E(\mathbb Z^d): x,y\in \Lambda\}$, $\overline{E}(\Lambda):=\{xy\in E(\mathbb Z^d): x\in \Lambda \textup{ or }y\in \Lambda\}$, $\partial \Lambda:=\{x\in \Lambda: \exists y\notin \Lambda, \: x\sim y\}$, $\partial^{\textup{ext}}\Lambda:=\{y\in \mathbb Z^d\setminus \Lambda: \exists x\in \Lambda, \: x\sim y\}$ and $\overline{\Lambda}:=\Lambda\cup\partial^{\textup{ext}}\Lambda$.

\paragraph{The Ising model.} It will be convenient for us to define our models on \emph{weighted (multi-)graphs}. Let $G=(V,E,J)$ be a finite weighted graph with vertex set $V$, edge set $E$, and edge-weights $J:E\rightarrow \mathbb{R}^+$. 
Consider the Hamiltonian on $\Sigma_G\coloneqq \{-1,1\}^{V}$ given by
\begin{equation*}
	H_{G}(\sigma)\coloneqq - \sum_{xy\in \overline{E}} J_{xy}\sigma_x\sigma_y.
\end{equation*}
The Ising model on $G$ is defined as the probability measure $\nu_G$ on $\Sigma_G$ given by
\begin{equation*}
	\nu_G(\sigma) := \frac{1}{Z^{\textup{Ising}}_{G}} e^{-H_{G}(\sigma)},
\end{equation*}
where $Z^{\textup{Ising}}_{G}=\sum_{\sigma\in\Sigma_G} e^{- H_{G}(\sigma)}$ is the normalising \emph{partition function}. 
Expectations with respect to $\nu_G$ will be denoted by $\langle f(\sigma) \rangle_G$. 

For finite subsets $\Lambda\subset \Z^d$, we denote  by $\Lambda[\beta]$ the induced weighted graph with homogeneous weights equal to $\beta\geq 0$, i.e.~$\Lambda[\beta]=(\Lambda,E(\Lambda),\beta\mathbbm{1}_{E(\Lambda)})$. 
The measure $\nu_{\Lambda[\beta]}$ corresponds to the Ising model on $\Lambda$ with inverse temperature $\beta$ and \emph{free boundary conditions}.  
It is also convenient to consider the model with \emph{plus boundary condition}, which in this setting can be defined as follows. 
Let $\Lambda^\fg[\beta]$ denote the weighted (multi-)graph obtained from $\overline{\Lambda}[\beta]$ by collapsing all the vertices of $\partial^{\textup{ext}} \Lambda$ into a single \emph{ghost vertex} $\fg$. 
The Ising model on $\Lambda$ with inverse temperature $\beta$ and plus boundary conditions is then defined as 
\begin{equation*}
    \nu^+_{\Lambda[\beta]}:=\nu_{\Lambda^\fg[\beta]}(\ \cdot \mid \sigma_\fg=1).
\end{equation*}
It is classical \cite{GriffithsCorrelationsIsing1-1967,GriffithsCorrelationsIsing2-1967} that there exist corresponding infinite volume measures $\nu_\beta$ and $\nu_\beta^+$ respectively defined by
$$\nu_{\beta} \coloneqq \lim_{\Lambda \uparrow\Z^d} \nu_{\Lambda[\beta]},\qquad\nu^+_{\beta} \coloneqq \lim_{\Lambda \uparrow\Z^d} \nu^+_{\Lambda[\beta]}.$$
For $d\geq2$, the plus boundary condition \emph{breaks} the flip symmetry of the model for $\beta$ sufficiently large, thus leading to a phase transition. More precisely, the critical parameter defined as
$$\beta_c \coloneqq \inf\Big\{\beta\geq0 : m(\beta)\coloneqq \langle \sigma_0\rangle^+_\beta>0\Big\}$$
is \emph{non-trivial} (i.e.~$\beta_c\in(0,\infty)$) for every $d\geq2$, where $\langle\cdot\rangle_\beta^+$ denotes expectations with respect to $\nu_\beta^+$.

\paragraph{Random current representation.} Let $G=(V,E,J)$ be a finite weighted graph. Let $\Omega_{E}:= \mathbb{N}^{E}$ be the space of currents in $G$, where $\mathbb N=\lbrace 0,1,2,\ldots\rbrace$. Given $\n \in \Omega_{E}$, let 
$
\partial \n
:=
\lbrace x \in V : \Delta\n(x) \text{ is odd}\rbrace
$ be the set of sources of $\n$, where
$
\Delta\n(x):= \sum_{y\in V} \n_{xy}	
$
is the $\n$-degree of $x$.
Given $\n \in \Omega_E$, write 
\begin{equation*}
w_G(\n)=\prod_{xy\in E} \frac{J_{xy}^{\n_{xy}}}{\n_{xy}!}.
\end{equation*}
Given $A\subset V$ of even cardinality (we later say that $A$ is even), we define the probability measure $\mathbf{P}^A_{G}$ by the weights
\begin{equation*}
\mathbf{P}^A_{G}[\n]=\frac{\mathbbm{1}_{\partial \n=A} w_{G}(\n)}{Z^A_{G}},    
\end{equation*}
where $Z^A_{G}$ is a normalisation constant.
For two weighted graphs $G,H$ and source-sets $A,B$, we write $\mathbf{P}^{A,B}_{G,H}:=\mathbf{P}^{A}_{G}\otimes \mathbf{P}^{B}_{H}$ to denote the corresponding product measure (when $G=H$, we may omit $H$ from the subscript). We regard $\mathbf{P}^{A,B}_{G,H}$ as a percolation measure via the map $(\n,\m)\mapsto \widehat{\n+\m}$ defined by $(\widehat{\n+\m})_e:=\mathbbm{1}\{ \n_e+\m_e>0\}$. 
As observed in \cite{AizenmanDuminilSidoraviciusContinuityIsing2015}, when the graphs are subgraphs of $\mathbb Z^d$, it is possible to construct infinite volume limits of the above measures as $G,H\nearrow \mathbb Z^d$. In the setting with homogeneous edge-weight equal to $\beta\geq 0$, we denote the resulting double random current measure by $\mathbf{P}^{A,B}_{\beta}:=\lim_{\Lambda \uparrow \mathbb Z^d}\mathbf P^{A,B}_{\Lambda[\beta],\Lambda[\beta]}$.

Spin correlations of the Ising model can be expressed in terms of weighted sums over currents, see \eqref{eq: expression of correlation with currents}. One of the main interest of the random current representation lies in a simple but fundamental combinatorial identity called the switching lemma, see Lemma~\ref{lem:switching} below. It allows to express ratios and differences of correlation functions in terms of probability of connectivity events in the appropriate double random current measures. 
In particular, one can show that, for every $x,y\in \mathbb Z^d$,
\begin{equation*}
    \langle \sigma_x\sigma_y\rangle_{\beta}^2=\mathbf{P}^{\emptyset,\emptyset}_{\beta}[0\connect{\widehat{\n_1+\n_2}\:}x].
\end{equation*}
Combined with the tools developed in \cite{AizenmanDuminilSidoraviciusContinuityIsing2015}, this suffices to prove that the percolation phase transition of $\mathbf{P}^{\emptyset,\emptyset}_{\beta}$ occurs at $\beta_c$. 

Despite these properties, the double random current percolation measure remains difficult to analyse. Indeed, the measures $\mathbf P^{A,B}_{G,H}$ introduced above do not satisfy the FKG inequality, a fact that implies, in particular, that several monotonicity properties (in volume, edge-weight, etc.) are unavailable.

For $L\geq 1$, we let $\Lambda_L=[-L,\ldots,L]^d\cap \mathbb{Z}^d$. Our main result is a supercritical sharpness result for double random currents. Below, for $L\geq 1$, we let $U(L)$ denote the \emph{local uniqueness event at scale $L$}, that is, the event that there exists at least one cluster crossing $\Lambda_{8L}\setminus \Lambda_L$, and that any two clusters crossing $\Lambda_{4L}\setminus\Lambda_{2L}$ are connected in $\Lambda_{8L}\setminus \Lambda_L$.

\begin{Thm}[Supercritical sharpness for random currents]\label{thm:unique_currents}
Let $d\geq 3$ and $\beta>\beta_c$. Then,  \begin{equation}\label{eq:unique_currents}
        \lim_{L\to\infty} \inf_{A,B\subset \partial\Lambda_{10L}} \mathbf{P}^{A,B}_{\Lambda_{10L}[\beta], \Lambda_{10L}[\beta]} [U(L)] = 1,
    \end{equation}
    where the infimum is taken over all subsets of even cardinality.
\end{Thm}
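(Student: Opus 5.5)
The plan is to reduce local uniqueness for double random currents to a statement about the FK-Ising (random cluster) measure, where Bodineau's supercritical sharpness \cite{Bod05} is available, and to use a sprinkling argument to pass from the FK picture back to the currents. The starting point is the standard coupling between random currents and the random cluster model. Write $\n_1$ and $\n_2$ for the two currents, so that the percolation configuration is $\widehat{\n_1+\n_2}$. Let $\omega$ be obtained from $\widehat{\n_1}$ by adding each edge independently with probability $1-e^{-\beta}$. Then $\omega$ has the law of an FK-Ising configuration on $\Lambda_{10L}$ (with free boundary and ghost-type conditioning on sources). Thus a single current $\n_1$ is a thinning of an FK configuration, and $\widehat{\n_1+\n_2}$ is ``between'' one current and FK. Since currents are not monotone, I would not try to compare events directly. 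Instead I would work with the structure of the sources: by the switching lemma (Lemma~\ref{lem:switching}), $\mathbf P^{A,B}$ can be rewritten as $\mathbf P^{A\Delta B,\emptyset}$ conditioned on connectivity events between the sources. Combined with the fact that $A,B\subset\partial\Lambda_{10L}$ lie far from $\Lambda_{8L}$, this should let me handle the infimum over sources, either by a conditioning argument or by a Radon–Nikodym comparison restricted to $\Lambda_{8L}$ once the annulus is surrounded by a large cluster.

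The key steps, in order, are as follows.

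\textbf{Step 1 (existence of crossings).} Using $\beta>\beta_c$ and $d\ge3$, I show that with probability tending to one there are many disjoint $\widehat{\n_1+\n_2}$-crossings of every box of scale $L$ in the annulus, uniformly in $A,B$. For this I would use $\langle\sigma_x\sigma_y\rangle^2=\mathbf P^{\emptyset,\emptyset}[x\leftrightarrow y]$ together with positivity of $m(\beta)^2$, plus a second-moment or renormalisation estimate. The assumption $d\ge 3$ should enter here, through the fact that slabs percolate at every $\beta>\beta_c$.

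\textbf{Step 2 (uniqueness for FK at a slightly smaller parameter).} Choose $\beta_c<\beta'<\beta$ and split each current edge weight as $\beta=\beta'+(\beta-\beta')$. Because the current weights are Poisson, a current at $\beta$ is distributed as the sum of a current at $\beta'$ and an independent Poisson sprinkling at $\beta-\beta'$, conditioned on the total source constraint. This conditioning is only a bounded-density tilt on local events, controlled by ratios of partition functions.

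\textbf{Step 3 (sprinkling).} Any two macroscopic clusters crossing $\Lambda_{4L}\setminus\Lambda_{2L}$ in the $\beta'$-configuration come within distance $O(1)$ of each other at many disjoint places, by Bodineau's FK local uniqueness transferred through the coupling. The independent sprinkled edges then connect them with probability $1-e^{-cL}$.

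The main obstacle is Step 2: making the decomposition of a current at $\beta$ into a $\beta'$-current plus independent sprinkling rigorous despite the parity constraint $\partial\n=A$. My first attempt would be to sprinkle only on edges with even added multiplicity, which preserves the sources. I would take the added multiplicity distributed as $2\,\mathrm{Poisson}$ conditioned, i.e.\ weights proportional to $(\beta-\beta')^{2k}/(2k)!$. This gives a genuine independent Bernoulli sprinkling with density bounded below, and it is exactly independent of the $\beta'$ part because the cosh-type weights factorise. A second difficulty is the passage from FK uniqueness to the current clusters, since currents are thinner than FK. Here I would invoke Step 1 to guarantee that the current clusters themselves are dense, so that sprinkling suffices.
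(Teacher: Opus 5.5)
Your Step~2 has the right seed. Splitting $\beta=\beta'+(\beta-\beta')$ and letting the even part of the $(\beta-\beta')$-multiplicity act as sprinkling does give, conditionally, an independent $\mathrm{Ber}(1-1/\cosh(\beta-\beta'))$ percolation; this is exactly the quantity $p_1$ in the proof of Proposition~\ref{prop:coupling}. But what remains is \emph{not} a $\beta'$-configuration. The parity of the $(\beta-\beta')$-part cannot be discarded without violating $\partial\mathbf n=A$, so the unsprinkled object is a current carrying, on each bond, an extra $\{0,1\}$-valued edge of weight $\tanh(\beta-\beta')$. Nothing at parameter $\beta'$ applies to it directly. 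Every density estimate feeding the sprinkling must be proved for \emph{this} model, with two currents and arbitrary sources; your Step~1, stated for the $\beta$-configuration itself, does not supply that. To keep the switching lemma and the Ising/FK toolbox available for the unsprinkled model, the paper realises the parity edge as a path $x$--$z$--$y$ of two Poisson edges of weight $\xi$, with $\tanh(\xi)^2=\tanh(\beta-\beta')$. This gives a genuine current on $\tilde\Lambda[\beta,\beta']$, and one checks $p_1>(1-1/\cosh\xi)^2=p_2$. The parameter $\beta'$ then enters only through the FKG comparison of $\phi_{\widetilde{\Lambda_N}[\beta,\beta']}$ with $\phi_{\Lambda_N[\beta']}$.

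The genuine gap is in Steps~1 and~3. The identity $\langle\sigma_x\sigma_y\rangle^2=\mathbf P^{\emptyset,\emptyset}[x\leftrightarrow y]$ together with $m(\beta)>0$ only gives probabilities bounded below. Without FKG for currents, no second-moment or renormalisation argument upgrades them. Nor can Bodineau's FK uniqueness be transferred to current clusters: FK is the current \emph{plus} added edges, so distinct current clusters may be joined in FK only through long added paths.

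What is needed is the surface-order bound of Proposition~\ref{prop:uniform connectivity}: $\mathbf P^{A,B}[\Lambda_n(x)\leftrightarrow\partial\Lambda_{N-2n}]\ge1-e^{-cn^{d-1}}$ on the modified graph, uniformly in $A,B$. The paper proves it as follows. It collapses $\Lambda_{n/2}(x)$ and $\Lambda_{n/2}(y)$ into single vertices $\mathbf x,\mathbf y$, bounds $\mathbf P^{S,\emptyset}[\mathbf x\leftrightarrow\mathbf y]\ge\langle\sigma_{\mathbf x}\sigma_{\mathbf y}\rangle^2$ via switching and Griffiths, and controls the latter by FKG and Bodineau. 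It then undoes the collapse at cost $e^{\varepsilon n^{d-1}}$ via Lemma~\ref{lem:weak weak mix}. Sources of the second current are removed by the monotonicity $\mathbf P^{A,B}[\mathsf C=C]\le\mathbf P^{A,\emptyset}[\mathsf C=C]$, which is why box-to-boundary events are used.

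Your switching-lemma conditioning cannot replace this step. The conditioning probability $\mathbf P^{A\Delta B,\emptyset}[\mathcal F_A]=\langle\sigma_A\rangle\langle\sigma_B\rangle/\langle\sigma_{A\Delta B}\rangle$ can be of order $e^{-cL^{d-1}}$ (take $A=B$ spread over the boundary). That swamps any $e^{-cn^{d-1}}$ bound at scales $n\ll L$.

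With the surface-order bound in hand, a union bound makes every box of size $\ell_L=C(\log L)^{1/(d-1)}$ connected to $\partial\Lambda_{8L}$. The Benjamini--Tassion/Severo argument (Proposition~\ref{prop:uniqueness with sprinkling}) then converts $\ell_L$-density with $\ell_L=o(\log L)$ into $U(L)$. This, not slab percolation, is where $d\ge3$ is used. Your claim that two clusters come within distance $O(1)$ at many places is not available from your steps. Even if it were, it would not suffice without that lemma, which handles all ways of splitting the crossing clusters at once.
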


\begin{Rem}\label{rem:2D local uniqueness}
    It is folklore that \eqref{eq:unique_currents} also holds for $d=2$. In fact, a stronger result, involving only the single current measure $\mathbf P^A_{\Lambda_{10L}[\beta]}$ (with $A\subset \partial \Lambda_{10L}$ of even cardinality), holds.  
    We only sketch the argument here in the simpler case $A=\emptyset$, which follows an observation of \cite{hansen2023uniform}. By planarity, local uniqueness follows from the existence of a circuit that surrounds a dyadic annulus with probability tending to $1$. Since the single sourceless random current measure stochastically dominates the high-temperature expansion (see e.g.~\cite{AizenmanDuminilTassionWarzelEmergentPlanarity2019}), it suffices to prove the existence of a circuit for the latter. The high-temperature expansion at parameter $\beta$ is distributed as the interfaces separating the sign clusters of a wired Ising model on the dual lattice at parameter $\beta^*$. By the Kramers--Wannier duality, if $\beta>\beta_c$, then $\beta^*<\beta_c$. For the subcritical Ising model, the probability of a sign-cluster crossing the annulus decays exponentially, uniformly over the boundary conditions \cite{Higuchi93}. When such a crossing does not happen, we get the desired existence of a circuit. The general case, with source set $A$ arbitrary on the boundary, follows by similar considerations\footnote{Adding sources to the single random current measure induces spin boundary conditions when passing to the dual Ising model: letting $A=\{a_1,\ldots,a_{2k}\}$ be the sources listed in cyclic order, they induce alternating sign boundary conditions on the arcs connecting $a_i$ to $a_{i+1}$. Because the exponential decay of a sign-cluster crossing is uniform in the boundary conditions, this does not affect the previous argument.}. 
\end{Rem}

When combined with a standard coarse-graining argument, Theorem~\ref{thm:unique_currents} implies that, for every $\beta>\beta_c$ and every $N$ large enough, the percolation model $\mathbf{P}^{S,T}_{\Lambda_N[\beta],\Lambda_N[\beta]}$ behaves essentially like a highly supercritical Bernoulli percolation ``away'' from the sources $S$ and $T$. Indeed, fix $L$ sufficiently large (to be chosen later), and consider the renormalised site percolation process $\eta^L$ on $\Lambda_{\tilde{N}}$, with $\tilde{N}:=\lfloor N/L \rfloor-10$, defined by
\begin{equation*}
	\eta^L_z := \mathbbm{1}{\{\eta(Lx+\cdot)\in U(L)\}}, ~~~~~~ z\in  \Lambda_{\tilde{N}},
\end{equation*}
where $\eta=\widehat{\n+\m}$ is distributed as the percolation marginal of $\mathbf{P}^{S,T}_{\Lambda_N[\beta],\Lambda_N[\beta]}$. 
First, by the definition of the event $U(L)$, every cluster of $\eta^L$ deterministically induces a cluster of $\eta$.
Second, by the domain Markov property, one has 
\begin{equation*}
	\mathbf{P}^{S,T}_{\Lambda_N[\beta],\Lambda_N[\beta]}[\eta^L_{z}=1 \,|\, ( \eta^L_{w}, \, w\in \Lambda_{\tilde{N}} \setminus \Lambda_{20}(z) )]\geq  \inf_{A,B\subset \partial \Lambda_{10L}}\mathbf{P}^{A,B}_{\Lambda_{10L}[\beta],\Lambda_{10L}[\beta]}[U(L)],
\end{equation*}
for every $x\in \Lambda_{\tilde{N}}\setminus (S_L\cup T_L)$, where $S_L:= \{z\in \Lambda_{\tilde{N}} :\, \Lambda_{10L}(Lz)\cap S\neq\emptyset \}$ (and $T_L$ defined analogously). 
It then follows from Theorem~\ref{thm:unique_currents} and the main result of \cite{liggett1997domination} that $\eta^L$ stochastically dominates a Bernoulli site percolation on $\Lambda_{\tilde{N}}\setminus (S_L\cup T_L)$ whose parameter can be made arbitrarily close to $1$, provided that $L$ is chosen large enough.
Consequently, many properties that are known to hold for strongly supercritical Bernoulli percolation automatically transfer to $\mathbf{P}^{S,T}_{\Lambda_N[\beta],\Lambda_N[\beta]}$. For example, one can deduce that the convergence in \eqref{eq:unique_currents} is exponential, that is,
\begin{equation*}
\inf_{A,B\subset \partial\Lambda_{10L}} \mathbf{P}^{A,B}_{\Lambda_{10L}[\beta], \Lambda_{10L}[\beta]} [U(L)]\geq1-e^{-cL}.
\end{equation*}
Combined with the switching lemma, this implies several mixing results for the Ising model, which we state below.

\paragraph{Consequences.} 
The following are easy consequences of Theorem~\ref{thm:unique_currents}. We state these corollaries for $d\geq3$, but the proofs only use the fact that \eqref{eq:unique_currents} holds, and therefore also work for the planar case $d=2$ (see Remark~\ref{rem:2D local uniqueness}).
Below, for an Ising Gibbs measure $\langle\cdot\rangle$ and finite subsets $A,B\subset \mathbb Z^d$, we let $\sigma_A:=\prod_{x\in A}\sigma_x$ and
\begin{equation*}
    \langle \sigma_A;\sigma_B\rangle:=\langle \sigma_A\sigma_B\rangle -
    \langle \sigma_A\rangle \langle \sigma_B\rangle.
\end{equation*}

\begin{Coro}[Exponential decay of truncated correlations]\label{coro: truncated} 
For every $d\geq 3$ and $\beta>\beta_c$, there exists $c=c(d,\beta)>0$ such that the following holds. For every $n\geq1$ and every finite subsets $A\subset \Lambda_n$ and $B\subset \Z^d\setminus\Lambda_{2n}$ 
    \begin{equation}\label{eq:truncated}
        \langle \sigma_A ; \sigma_B \rangle_\beta^+ \leq 
        e^{-cn} \langle \sigma_A\rangle_{\beta}^+\langle \sigma_B\rangle_{\beta}^+.
    \end{equation}
In particular, 
for every $x\in\Z^d$,
    \begin{equation}\label{eq:truncated_pt_to_pt}
        \langle \sigma_0 ; \sigma_x \rangle_\beta^+ \leq 
        e^{-c|x|}.
    \end{equation} 
\end{Coro}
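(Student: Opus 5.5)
We work in finite volume and pass to the limit at the end. Fix $N\gg n$ and consider the plus measure on $\Lambda_N$, realised through the ghost graph $G:=\Lambda_N^\fg[\beta]$, so that $\langle \sigma_A\rangle^+_{\Lambda_N[\beta]}=\langle\sigma_A\sigma_\fg^{|A|}\rangle_G$. Write $A^\fg:=A\,\triangle\,\{\fg\}$ if $|A|$ is odd and $A^\fg:=A$ otherwise, and define $B^\fg$ similarly. By the switching lemma (in the form of Corollary~\ref{coro:expression truncated switching}), I expect the following identity:
\begin{equation*}
\frac{\langle \sigma_A;\sigma_B\rangle^+_{\Lambda_N[\beta]}}{\langle\sigma_A\rangle^+_{\Lambda_N[\beta]}\langle\sigma_B\rangle^+_{\Lambda_N[\beta]}}=\mathbf P^{A^\fg,B^\fg}_{G,G}\big[\mathcal E\big],
\end{equation*}
where $\mathcal E$ is an appropriate event. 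Concretely, $\langle\sigma_A\sigma_B\rangle$ corresponds to $\mathbf P^{A\triangle B,\emptyset}$ after switching, and the difference with the product equals the probability, under the product measure with sources $(A^\fg,B^\fg)$, of an event $\mathcal E$ that forces $\widehat{\n+\m}$ to connect $\Lambda_n$ (containing $A$) to $\Z^d\setminus\Lambda_{2n}$ (containing $B$) in a way that does not pass through the ghost. More precisely, $\mathcal E$ should be the event that some vertex of $A$ is connected to some vertex of $B$ in $\widehat{\n+\m}$ restricted to $G\setminus\{\fg\}$, or a closely related event. I expect that the exact form of $\mathcal E$ is what Corollary~\ref{coro:expression truncated switching} supplies, and that it is contained in the event that the cluster structure near $\Lambda_n$ is disconnected from $\fg$ but reaches distance $n$.

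The main step is then to show $\mathbf P^{A^\fg,B^\fg}_{G,G}[\mathcal E]\le e^{-cn}$ uniformly in $N$, $A$ and $B$. For this I use the renormalisation described after Theorem~\ref{thm:unique_currents}. Choose $L$ large and cover the annulus $\Lambda_{2n}\setminus\Lambda_n$, which contains no sources other than the ghost, by boxes of scale $L$. By the domain Markov property and Theorem~\ref{thm:unique_currents}, with the Liggett--Schonmann--Stacey domination, the good-box process $\eta^L$ dominates Bernoulli site percolation with parameter close to $1$ there. The ghost vertex plays the role of the boundary $\partial^{\mathrm{ext}}\Lambda_N$ and lies far away, but I must make sure that the argument of the paragraph after Theorem~\ref{thm:unique_currents} still applies with sources at $\fg$. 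Since the conditioning is on the configuration outside a box, and the induced sources then lie on the box boundary, it does. In highly supercritical site percolation, with probability $1-e^{-cn}$ there is a unique good cluster that contains a closed separating surface in every sub-annulus and is connected to distance $\gg n$ (hence to $\fg$, once we also control the region outside $\Lambda_{2n}$, or by letting $N\to\infty$ with uniqueness). Any path from $\Lambda_n$ to $\Lambda_{2n}^c$ must cross this surface. By local uniqueness, it then merges with the macroscopic cluster, which is connected to the ghost, so $\mathcal E$ fails.

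Taking $N\to\infty$ gives \eqref{eq:truncated}. The point-to-point bound \eqref{eq:truncated_pt_to_pt} follows by taking $A=\{0\}$, $B=\{x\}$ and $n\asymp|x|$, using $\langle\sigma_0\rangle^+\le1$ and translation invariance for the general position. The main obstacle is the first step: pinning down precisely which event the switching lemma produces, and verifying that a path avoiding the ghost is ruled out by local uniqueness plus connection of the unique macroscopic cluster to $\fg$. In particular, I need to check that "connected to $\fg$" is implied with high probability, perhaps by adding an extra sprinkling or a Peierls-type argument on the renormalised lattice outside $\Lambda_{2n}$.
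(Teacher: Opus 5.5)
There is a genuine gap. It sits at the step you flagged as the main obstacle, and the tools you suggest do not close it.

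First, a minor point. Corollary~\ref{coro:expression truncated switching} gives $\langle\sigma_A;\sigma_B\rangle^+_{\Lambda_N[\beta]}=\langle\sigma_A\sigma_B\rangle^+_{\Lambda_N[\beta]}\,\mathbf P^{A^\fg\Delta B^\fg,\emptyset}_{G,G}[\mathcal F^c_{A^\fg}]$. So the ratio with the product is $p/(1-p)$, not a probability under $\mathbf P^{A^\fg,B^\fg}$; this is harmless.

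The real issue appears when you split according to the parity of $|A|$.
\begin{itemize}
\item If $|A|$ is even, the source constraints force \emph{two disjoint} clusters joining $A$ to $B\cup\{\fg\}$, both crossing $\Lambda_{2n}\setminus\Lambda_n$. An $\eta^L$-open blocking surface in this source-free annulus rules this out directly. No connection to $\fg$ is needed, and the bound is uniform in $B$; this is the paper's Step~1.
\item If $|A|$ is odd, local uniqueness leaves one bad scenario: a single crossing cluster joins $A$ to $B$ and avoids $\fg$. Meanwhile the cluster of $\fg$ may carry points of $B$ and stay far from $\Lambda_{2n}$.
\end{itemize}
Excluding the odd scenario requires the macroscopic cluster of the annulus to reach $\partial\Lambda_N$ and hook up to $\fg$ with probability $1-e^{-cn}$, uniformly in $N$ and $B$. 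A Peierls argument on the renormalised lattice outside $\Lambda_{2n}$ does not provide this. The domination by Bernoulli percolation holds only off $B_L$, and $B$ is an arbitrary finite subset of $\Z^d\setminus\Lambda_{2n}$. For example, $B$ could be a thick shell around $\Lambda_{2n}$, leaving you no control on the region the cluster must traverse.

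The missing ingredient is the Lebowitz inequality (Proposition~\ref{prop: lebowitz} with Remark~\ref{rem:twist of lebowitz}). The paper uses it to avoid connecting to $\fg$ for general $B$.
\begin{itemize}
\item The ghost argument is run only for the two-point function (Step~2). There the sources are just $\{0,x\}$, so the renormalised process is controlled everywhere else. The blocking surface then meets $\partial\Lambda_{\tilde N}$ in many points, and insertion tolerance joins it to $\fg$.
\item For odd $|A|$ and $x\in A$, Lebowitz yields $\langle\sigma_A\sigma_B\rangle\langle\sigma_x\rangle\le\langle\sigma_{A\setminus\{x\}};\sigma_B\rangle+\langle\sigma_B\sigma_x\rangle\langle\sigma_A\rangle$.
\item The first term is handled by the even case together with Griffiths' inequality.
\item The bound $\langle\sigma_B\sigma_x\rangle\le(1+e^{-cn})\langle\sigma_B\rangle\langle\sigma_x\rangle$ follows from a second application of Lebowitz with an auxiliary $y\in\Lambda_n$ satisfying $|y-x|\ge n$. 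That step uses the even case for $\{x,y\}$, the two-point bound, and $\langle\sigma_x\rangle^+\ge m(\beta)>0$.
\end{itemize}
So the logical order is the reverse of yours: \eqref{eq:truncated_pt_to_pt} is an input to the odd case of \eqref{eq:truncated}, not a corollary of it.
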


\begin{Rem}\label{rem: free even mixing}
    Our proof of \eqref{eq:truncated} also implies that the same inequality holds for the free measure $\langle\cdot\rangle_\beta$ if  $A$ and $B$ are assumed to have even cardinality. 
\end{Rem}

\begin{Coro}[Exponential mixing for Ising]\label{cor: ising mixing}
	Let $d\geq 3$ and $\beta>\beta_c$. There exists $c=c(d,\beta)>0$ such that, for every $n\geq1$ and all events $E$ and $F$ depending on $\Lambda_n$ and $\mathbb Z^d\setminus \Lambda_{2n}$, respectively, 
	\begin{equation*}
		\Big| \nu^+_{\beta}[E\cap F]-\nu^+_{\beta}[E]\nu^+_{\beta}[F] \Big| \leq e^{-cn}.
	\end{equation*}
\end{Coro}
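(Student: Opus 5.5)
We deduce the mixing statement from Corollary~\ref{coro: truncated}. The route goes through spin correlations and uses the fact that indicator functions of events are polynomials in the spins. Both $E$ and $F$ are arbitrary events, so we cannot simply expand them in the Fourier--Walsh basis: that expansion has about $2^{|\Lambda_n|}$ terms, and $F$ may depend on infinitely many spins. We therefore first reduce to a more tractable form.

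\textbf{Step 1: reduction to local events.} By approximation it suffices to treat $F$ depending on $\Lambda_M\setminus\Lambda_{2n}$ for some $M$, with bounds uniform in $M$. We write $\mathbbm{1}_E=\sum_{S\subset\Lambda_n}\hat e_S\sigma_S$, where $\hat e_S=2^{-|\Lambda_n|}\sum_\sigma \mathbbm 1_E(\sigma)\sigma_S$, so that $|\hat e_S|\le 1$. We do the same for $F$. Linearity then gives
\begin{equation*}
\nu^+_\beta[E\cap F]-\nu^+_\beta[E]\nu^+_\beta[F]=\sum_{S,T}\hat e_S\hat f_T\,\langle\sigma_S;\sigma_T\rangle^+_\beta .
\end{equation*}
Bounding term by term with \eqref{eq:truncated} loses a factor $2^{|\Lambda_n|}\cdot 2^{|\Lambda_M|}$, which is not affordable.

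\textbf{Step 2: conditional formulation, which is the main obstacle.} To avoid this loss, we use the random current proof of Corollary~\ref{coro: truncated} directly rather than its statement. The switching-lemma representation presumably gives $\langle\sigma_S;\sigma_T\rangle^+$ as $\langle\sigma_S\rangle^+\langle\sigma_T\rangle^+$ times the $\mathbf P^{S\Delta\fg,T\Delta\fg}$-probability of a connection event. That event forces a double-current cluster joining the two regions without passing through the ghost, which by local uniqueness costs $e^{-cn}$. Rather than summing, we aim to show that, conditionally on the spins in $\Lambda_{2n}^c$, the law of $\sigma|_{\Lambda_n}$ is close in total variation to $\nu^+_\beta|_{\Lambda_n}$. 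That is, we show
\begin{equation*}
\sup_{\xi}\Big|\nu^+_\beta[E\mid \sigma|_{\Lambda_{2n}^c}=\xi]-\nu^+_\beta[E]\Big|\le e^{-cn}
\end{equation*}
outside a set of boundary conditions $\xi$ of probability at most $e^{-cn}$. Integrating this bound against $\mathbbm 1_F$ yields the claim.

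\textbf{Step 3: proving the conditional estimate.} We would pass to the coupled FK-Ising measure. By the Edwards--Sokal coupling, conditionally on the FK configuration, spins are constant on clusters, and each cluster not connected to infinity receives an independent fair sign. Local uniqueness and the renormalisation above transfer to the FK measure: the FK configuration stochastically dominates the trace of the double current, or we can alternatively use Bodineau's result. Consequently, with probability at least $1-e^{-cn}$ there is a unique macroscopic cluster crossing $\Lambda_{2n}\setminus\Lambda_n$, it is connected to infinity and hence plus, and it separates $\Lambda_n$ from $\Lambda_{2n}^c$ in the sense that every other cluster meeting $\Lambda_n$ stays inside $\Lambda_{3n/2}$. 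We then explore from the outside the clusters intersecting $\Lambda_{2n}^c$, using the domain Markov property of FK. On the good event, the configuration inside is a wired FK measure on a random domain containing $\Lambda_{3n/2}$. Comparing such wired measures for different domains via the same local uniqueness, by coupling the two explorations from inside, gives total variation $e^{-cn}$ on $\Lambda_n$. The spins on $\Lambda_n$ are a function of the FK configuration there together with independent signs, so this yields the conditional estimate.

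The delicate point is Step 3's coupling of wired FK measures on different random domains. This is essentially the FK exponential mixing statement, so we would prove that first, using the renormalised percolation $\eta^L$ and a standard ``disagreement percolation'' argument, and then deduce Corollary~\ref{cor: ising mixing} from it.
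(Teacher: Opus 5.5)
Your proposal has a genuine gap, at exactly the point you flag as delicate. Step 3 reduces everything to comparing wired FK (equivalently, plus Ising) measures on different random domains, which is essentially exponential mixing for supercritical FK-Ising. You propose to obtain this from local uniqueness by a ``standard disagreement percolation argument'', but no such standard argument exists in this regime.

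\begin{itemize}
\item Local uniqueness (Bodineau's result, or Theorem~\ref{thm:unique_currents}) does not by itself control the influence of the boundary, which propagates through the giant cluster. Nothing in it shows that the disagreement set in a monotone coupling of two FK measures with different boundary conditions is subcritical.
\item Supplying this input is precisely the hard part of \cite{DGR20}: their super-polynomial bound on the difference between wired and free edge densities, proved via the multi-valued map argument. In the present paper, FK mixing is only obtained afterwards, in Corollary~\ref{cor: perco ratio mixing}, from ratio mixing of random currents.
\item $\eta^L$ coarse-grains the double random current, not the FK configuration, so it cannot be plugged into an FK disagreement argument as is.
\item Step 2 conditions on the spins in $\Lambda_{2n}^c$, whereas Step 3 conditions on an FK exploration from outside. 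Even in $d\geq 3$, extracting from such an exploration a stopping domain containing $\Lambda_{3n/2}$ with wired boundary condition is not straightforward.
\end{itemize}

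The idea you are missing is that the $2^{|\Lambda_n|}$ loss of Step 1 can be avoided with no conditioning at all, using only the FKG inequality. Flipping one spin from $-1$ to $+1$ increases $\tfrac12\sum_{x\in\Lambda_n}\sigma_x$ by $1$ and changes $\mathbbm{1}_E$ by at most $1$. Hence the functions $\pm\mathbbm{1}_E+\tfrac12\sum_{x\in\Lambda_n}\sigma_x$ and $\pm\mathbbm{1}_F+\tfrac12\sum_{y\in S}\sigma_y$ are increasing, where $F$ depends on a finite $S\subset\mathbb Z^d\setminus\Lambda_{2n}$. Apply FKG to the two pairs with opposite signs, and separately to the two pairs with equal signs. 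This gives
\[
|\textup{Cov}(\mathbbm{1}_E,\mathbbm{1}_F)|\leq \tfrac14\sum_{x\in\Lambda_n}\sum_{y\in S}\langle\sigma_x;\sigma_y\rangle^+_\beta .
\]
By \eqref{eq:truncated_pt_to_pt}, the right-hand side is at most $Cn^{d}\sum_{r\geq n}r^{d-1}e^{-cr}$, uniformly in $S$. This is at most $e^{-c'n}$ for a suitable $c'>0$, since small $n$ are covered by the trivial bound $1/4$. General $F$ then follows by approximation. This is the paper's proof: it needs only the two-point bound of Corollary~\ref{coro: truncated}, and it loses a polynomial factor rather than an exponential one.
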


\begin{Rem}
The stronger \emph{exponential ratio mixing} statement (i.e.~$|\nu^+_{\beta}[E\cap F]-\nu^+_{\beta}[E]\nu^+_{\beta}[F]|\leq e^{-cn}\nu^+_{\beta}[E]\nu^+_{\beta}[F]$), as derived for other measures in Corollary \ref{cor: perco ratio mixing} below, is not true here due to the fact that there are several infinite volume Gibbs measures. To see this, consider the events $E=\{\sigma_0=1\}$ and $F=\{\sigma_x=-1 \, \forall x\in \partial^{\textup{ext}} \Lambda_{2n}\}$ and observe that, by the domain Markov property, $\nu^+[E\: |\: F]/\nu^+[E]=\nu_{\Lambda_{2n}[\beta]}^-[E]/\nu^+[E]$ (where $\nu^-_{\Lambda[\beta]}=\nu_{\Lambda^\fg[\beta]}(\ \cdot \mid \sigma_\fg=-1)$), which does not converge to $1$ since $\nu^+_\beta\neq \nu^-_\beta$ when $\beta>\beta_c$. Note however that by summing \eqref{eq:truncated}, ratio mixing holds for random variables of the form $X=\sum_S a_S \sigma_S$, with $a_S\geq 0$. In particular, this includes\footnote{Indeed, observe that $\mathds{1}_{\sigma_x=1 \: \forall x\in A}=2^{-|A|}\prod_{x\in A}(\sigma_x+1)$.} events of the form $\{\sigma_x=1 \,\, \forall x\in A\}$.
\end{Rem}

Our last corollary concerns the ratio mixing of two percolation representations of the Ising model. 
Let $\mathbf{P}_\beta$ be the (unique) infinite-volume random current measure, given by the weak limit of $\mathbf{P}^S_{\Lambda[\beta]}$ as $\Lambda\uparrow\Z^d$, for arbitrary (even) boundary sources $S\subset \partial\Lambda$. The existence and uniqueness of this measure follow from \cite{AizenmanDuminilSidoraviciusContinuityIsing2015} and \cite{BodineauTranslationGibbsIsing2006} (see also \cite{RaoufiGibbsMeasures}), respectively. 
Analogously, one can also define a (unique) infinite volume random cluster (or FK) measure $\phi_\beta$ as the weak limit of the finite-volume measures $\phi^\xi_{\Lambda[\beta]}$ as $\Lambda\uparrow \Z^d$, for arbitrary boundary partitions $\xi\in \textup{Part}(\partial \Lambda)$ (the set of partitions of $\partial \Lambda$).  We refer the reader to Section~\ref{sec:preliminaries} for more details (including the definition of $\phi^\xi_{\Lambda[\beta]}$).

\begin{Coro}[Exponential ratio mixing for the percolation representations]\label{cor: perco ratio mixing} 
Let $d\geq 3$ and $\beta>\beta_c$. There exists $c=c(d,\beta)>0$ such that the following properties hold. For every $n\geq 1$ and every events $E$ and $F$ depending on edges in $\Lambda_n$ and $\mathbb Z^d\setminus \Lambda_{2n}$, respectively,
	\begin{equation}\label{eq:ratio-mixing-currents}
		\Big| \mathbf{P}_{\beta}[E\cap F]-\mathbf{P}_{\beta}[E]\mathbf{P}[F] \Big| \leq e^{-cn}\mathbf{P}_{\beta}[E]\mathbf{P}_{\beta}[F],
	\end{equation}
    and
    	\begin{equation}\label{eq:ratio-mixing-FK}
		\Big| \phi_{\beta}[E\cap F]-\phi_{\beta}[E]\phi_{\beta}[F] \Big| \leq e^{-cn}\phi_{\beta}[E]\phi_{\beta}[F].
	\end{equation}
\end{Coro}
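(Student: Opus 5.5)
We derive both estimates from Theorem~\ref{thm:unique_currents}, upgraded by the renormalisation argument above to the exponential bound $\inf_{A,B}\mathbf{P}^{A,B}_{\Lambda_{10L}[\beta],\Lambda_{10L}[\beta]}[U(L)]\geq 1-e^{-cL}$. The common mechanism is a \emph{decoupling through a boundary}. For the random current measure, we work in a large box $\Lambda_N$. We condition on the current restricted to $\overline{E}(\Lambda_{2n})^c$. The law of the current inside $\Lambda_{2n}$ is then a single current measure $\mathbf{P}^{S}_{\Lambda_{2n}[\beta]}$ (with some $\beta$-weighted boundary), where the source set $S\subset\partial\Lambda_{2n}$ is determined by the parities of the outside current. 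By uniqueness of $\mathbf{P}_\beta$, it therefore suffices to show that, for $E$ depending on $\Lambda_n$, uniformly over even $S,S'\subset\partial \Lambda_{2n}$,
\begin{equation*}
\big|\mathbf{P}^{S}_{\Lambda_{2n}[\beta]}[E]-\mathbf{P}^{S'}_{\Lambda_{2n}[\beta]}[E]\big|\leq e^{-cn}\,\mathbf{P}^{S'}_{\Lambda_{2n}[\beta]}[E].
\end{equation*}
Integrating this over the outside configuration gives \eqref{eq:ratio-mixing-currents}.

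\textbf{Step 1: comparing sources via switching.} Write $\mathbf{P}^S[E]=Z^S[E]/Z^S$, so the ratio $\mathbf{P}^S[E]/\mathbf{P}^{S'}[E]$ is a ratio of partition functions. Multiply numerator and denominator by $Z^{\emptyset}$, or more conveniently by $Z^{S\Delta S'}$. The switching lemma (Lemma~\ref{lem:switching}) then turns $Z^{S}[E]Z^{S'}$ into a double current sum with sources $(S,S')$, in which the sources are rearranged as long as the sources of $S\Delta S'$ can be paired up inside the cluster structure of $\widehat{\n_1+\n_2}$. Switching is only allowed for events measurable with respect to the first current and the union structure, so we must make sure that $E$ enters only through $\n_1$ restricted to $\Lambda_n$. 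This gives
\begin{equation*}
\frac{\mathbf{P}^{S}[E]\,\mathbf{P}^{S'}[\cdot]}{\mathbf{P}^{S'}[E]\,\mathbf{P}^{S}[\cdot]}=\frac{\mathbf{P}^{S,S'}[E\cap\mathcal C]}{\mathbf{P}^{S',S}[E\cap\mathcal C]}\cdot(\text{normalisation}),
\end{equation*}
where $\mathcal C$ is the event that the boundary sources are connected in a way not passing through $\Lambda_n$.

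\textbf{Step 2: local uniqueness makes the connection event essentially certain.} In $\Lambda_{2n}\setminus\Lambda_n$ we cover a separating shell by $L$-boxes with $L\asymp n$, or alternatively use many dyadic scales. On the event that $U(L)$ holds in all of these boxes, which by the exponential bound and a union bound has probability at least $1-e^{-cn}$ uniformly in sources, all sources on $\partial\Lambda_{2n}$ that reach the shell are joined through a single cluster in the annulus. The switching can then be performed entirely outside $\Lambda_n$, which yields the ratio bound. This is the main obstacle. We need the probability of the bad event conditionally on $E$ (ratio mixing, not merely additive mixing), and $E$ can have tiny probability. To handle this, we use that $U(L)$ in $\Lambda_{2n}\setminus\Lambda_{n}$ is controlled uniformly in the sources and in whatever happens inside $\Lambda_n$: we apply the domain Markov property for double currents, conditioning on the current in $\Lambda_{n+1}$, which only changes the source sets. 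The bound of Theorem~\ref{thm:unique_currents} is uniform in sources, so it survives conditioning on $E$.

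\textbf{Step 3: FK.} For \eqref{eq:ratio-mixing-FK}, the same scheme applies with boundary partitions $\xi$ in place of sources. It suffices to compare $\phi^\xi_{\Lambda_{2n}}[E]$ and $\phi^{1}_{\Lambda_{2n}}[E]$ up to a factor $1+e^{-cn}$. We use the coupling of FK with random currents, in which $\phi$ is obtained from a sourceless current plus independent Bernoulli$(1-e^{-\beta})$ sprinkling, or alternatively the Edwards--Sokal/Lupu--Werner correspondence. Under this coupling, a wired-like connection of all boundary pieces of $\partial\Lambda_{2n}$ through the annulus occurs with probability $1-e^{-cn}$ by Step 2, uniformly in the state inside $\Lambda_n$. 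On that event, the boundary condition seen by $\Lambda_n$ is identical for $\xi$ and for the wired condition, and the Markov property yields the ratio bound.
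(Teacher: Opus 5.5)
Your treatment of the current measure is sound in substance, but Step~1 misstates the switching lemma. Lemma~\ref{lem:switching} and its general form apply to functions of $\n_1+\n_2$, not to events on the first current. So $E$ can only be carried along after conditioning on $(\n_1,\n_2)$ restricted to $E(\Lambda_n)$, and then switching on the annulus $E(\Lambda_{2n})\setminus E(\Lambda_n)$ with respect to $S\Delta S'$. This gives the exact identity $\mathbf{P}^{S,S'}[E\cap\mathcal C]=\mathbf{P}^{S',S}[E\cap\mathcal C]$, with $E$ evaluated on $\n_1$. Here $\mathcal C$ is the event that every cluster of $\widehat{\n_1+\n_2}$ in the annulus meets $S\Delta S'$ an even number of times. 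The annulus sources are $S\cup A$ and $S'\cup\tilde A$, with $A,\tilde A\subset\partial\Lambda_n$, and $|S\Delta S'|$ is even. Hence $\mathcal C^c$ forces two disjoint annulus clusters joining $\partial\Lambda_n$ to $\partial\Lambda_{2n}$. Your good shell excludes this with probability $1-e^{-cn}$ uniformly in $(A,\tilde A)$, which gives the ratio comparison. This is a mild variant of the paper's route. The paper instead controls the cross-ratio of annulus partition functions through $Z^{A\cup B}Z^{\emptyset}/(Z^AZ^B)-1=\langle\sigma_A;\sigma_B\rangle/(\langle\sigma_A\rangle\langle\sigma_B\rangle)$, together with the bound from Corollary~\ref{coro: truncated} and Remark~\ref{rem: free even mixing}.

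The genuine gap is Step~3. Suppose that on a good event the boundary condition induced on $\Lambda_n$ is the same for $\xi$ and for the wired condition. The Markov property still does not give a ratio bound. Indeed, $\phi^\xi_{\Lambda_{2n}}[E]=\sum_{\omega}\phi^\xi_{\Lambda_{2n}}[\omega_{\mathrm{ann}}=\omega]\,\phi^{\zeta(\omega,\xi)}_{\Lambda_n}[E]$. Even if $\zeta(\omega,\xi)=\zeta(\omega,1)$ on the good event, the laws of the annulus configuration under $\phi^\xi$ and $\phi^1$ differ, and nothing in your argument compares them up to a factor $1+e^{-cn}$. Moreover, ``probability $1-e^{-cn}$ uniformly in the state inside $\Lambda_n$'' would be a statement about the FK measure conditioned on its inner configuration. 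Step~2 only controls the double current conditioned on its inner currents, and under the sprinkling coupling these two conditionings do not correspond. This FK-side boundary-insensitivity argument is exactly where ratio mixing is hard.

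The repair is to use the coupling at the level of measures, with no new connection event. The measure $\phi^\xi_{\Lambda_{2n}}$ is the free FK measure on $\Lambda_{2n}^\xi$. It is therefore the law of $\hat{\n}\cup\omega$, with $\n\sim\mathbf{P}^\emptyset_{\Lambda_{2n}^\xi}$ and $\omega$ an independent Bernoulli percolation. Viewed on $E(\Lambda_{2n})$, $\mathbf{P}^\emptyset_{\Lambda_{2n}^\xi}$ is a mixture of the measures $\mathbf{P}^S_{\Lambda_{2n}[\beta]}$. The mixture runs over the sets $S\subset\partial\Lambda_{2n}$ meeting every class of $\xi$ an even number of times. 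Your comparison of $\mathbf{P}^S_{\Lambda_{2n}[\beta]}$ on $\Lambda_n$ is uniform in $S$, and $\omega$ restricted to $\Lambda_n$ is independent of everything. So the comparison transfers at once to $\phi^\xi_{\Lambda_{2n}}$. This is what the paper does, in infinite volume: it combines the configuration-wise ratio bound for $\mathbf{P}_\beta$ with the fact that $\phi_\beta$ is a Bernoulli sprinkling of $\mathbf{P}_\beta$.
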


\begin{Rem} The above ratio mixing property also holds for the so-called \emph{high-temperature expansion} of the Ising model (also known as loop $O(1)$ model), since it has the same distribution as the set of edges where a current sampled from $\mathbf{P}_{\beta}$ takes an odd value  (see e.g.~\cite{AizenmanDuminilTassionWarzelEmergentPlanarity2019}). 
 Mixing for this representation was recently derived in \cite{hansen2026supercritical} using different methods, but relying on the results of \cite{DGR20}. 
\end{Rem}

\begin{Rem}
    By the domain Markov property of the respective measures, Corollary~\ref{cor: perco ratio mixing} readily implies that if $f_n^{S}$ (resp. $g_n^\xi$) denotes the Randon--Nikodym derivative between the measures $\mathbf{P}^S_{\Lambda_{2n}[\beta]}$ and $\mathbf{P}_\beta$ (resp. $\phi^\xi_{\Lambda_{2n}[\beta]}$ and $\phi_\beta$) restricted to $\Lambda_n$, then $\sup_{S\subset \partial\Lambda_{2n}} \|1-f^S_n\|_{\infty} \leq e^{-cn}$ (resp. $\sup_{\xi\in \mathrm{Part}({\partial\Lambda_{2n}})} \|1-g^\xi_n\|_{\infty} \leq e^{-cn}$). In particular, this also implies a corresponding exponential upper bound on their total variation distance.
\end{Rem}

\begin{Rem}
    Let us note that appropriate finite-volume versions of Corollaries~\ref{coro: truncated}, \ref{cor: ising mixing},  \ref{cor: perco ratio mixing} follow readily from our proofs, see Section~\ref{sec:corollaries}.
\end{Rem}

\noindent\textbf{Description of the proof.} Let us now describe the main novelties of our approach. To this end, we first recall the strategy of \cite{DGR20}. 

The starting point in \cite{DGR20} is the observation that \eqref{eq:truncated_pt_to_pt} follows easily from exponential mixing for the FK-Ising model (see \eqref{eq:ratio-mixing-FK}), together with the supercritical sharpness result established in \cite{Bod05}. Then, they show that \eqref{eq:ratio-mixing-FK} follows\footnote{In fact, their proof yields a non-ratio mixing statement (see \cite[Theorem 1.3]{DGR20}), from which ratio mixing follows (see \cite[Corollary 1.4]{DGR20}) via an abstract theorem from \cite{Ale98}. Here, we obtain ratio mixing directly.} from a coarse-graining argument once a super-polynomial bound on the difference between the edge densities of the free and wired FK-Ising measures is established (see \cite[Proposition 1.5]{DGR20}). 
Most of the work in \cite{DGR20} is therefore devoted to proving \cite[Proposition 1.5]{DGR20}.
Thanks to the switching lemma (see Lemma~\ref{lem:switching} below) one can express the difference between the marginals of the wired and free FK-Ising measures at an edge $e=\{x,y\}$ in terms of random currents. Proposition 1.5 in \cite{DGR20} follows from showing that, in a double random current with sources $A=\{x,y\}$ and $B=\emptyset$, the probability that $x$ and $y$ are both connected to the boundary of a ball of radius $n$, but not to each other, decays super-polynomially fast. This is achieved in two steps. The first consists in establishing a surface-order exponential upper bound on the probability that two large boxes are not connected in the double random current, see \cite[Proposition 4.1]{DGR20}. Such a bound is known for the random cluster model by supercritical sharpness, and is transferred to double random currents in \cite{DGR20} through a box-merging trick. The second step relies on a highly technical multi-valued map argument to show that, if large boxes are connected to each other with very high probability (as established in the first step), then two long paths starting from $x$ and $y$ must also be connected with very high probability.

Our overall strategy differs substantially from that of \cite{DGR20}, beginning with the fact that our main result establishes supercritical sharpness for double random currents, a result not obtained in \cite{DGR20}. Once this is established, all the consequences discussed above (including the results of \cite{DGR20}, as well as some new ones) follow rather easily. Nevertheless, our proof of Theorem~\ref{thm:unique_currents} shares some similarities with the proof of \cite[Proposition 1.5]{DGR20} outlined above. Indeed, our argument also proceeds in two steps. The first consists in establishing a surface-order exponential bound on disconnection probabilities. The proof is essentially the same as that of \cite[Proposition 4.1]{DGR20}. We note, however, that the proof in \cite{DGR20} only applies to double random currents in which at least one of the currents is sourceless (i.e.~$B=\emptyset$). Since we need to handle arbitrary pairs of boundary sources, we make use of a monotonicity argument from \cite{duminil2021conformal}.

The second step, on the other hand, is very different from that of \cite{DGR20}. Instead of relying on tedious multi-valued map argument, we build on an observation of \cite{SeveroSlabIsing2023}, introduced to give a short proof of supercritical sharpness for the FK-Ising model. The idea, inspired by \cite{benjaminitassion}, is that surface-order exponential bounds imply local uniqueness in the presence of ``sprinkling'', that is, when one is allowed to open additional edges independently with small probability (see Proposition~\ref{prop:uniqueness with sprinkling}). For percolation models that are monotone in the parameter $\beta$ (such as the FK-Ising model), the sprinkled model can be compared to the original model at a slightly larger parameter $\beta'>\beta$. However, as discussed above, the random current model lacks such monotonicity properties. To overcome this difficulty, we show in Proposition~\ref{prop:coupling} that the random current model can be represented as a sprinkled random current model on a \emph{modified weighted graph}. The argument from the first step readily extends to this modified graph (see Proposition~\ref{prop:uniform connectivity}), and the desired result then follows by combining these ingredients.

\hfill

\noindent \textbf{Acknowledgements.} CP was supported by an EPSRC New Investigator Award (UKRI1019). RP acknowledges support from the Swiss National Science Foundation and from the ERC grant
Vortex (No.\ 101043450).

\hfill

\noindent\textbf{Statement of AI use.} We acknowledge the use of LLMs for grammatical corrections and language editing. They were not used to generate ideas or proofs, or to suggest references.

\section{Preliminaries}\label{sec:preliminaries}

In this section, we recall classical results regarding the Ising model and its random cluster representation. Below, when working with plus boundary conditions on a weighted graph $G=(V,E,J)$, we additionally assume that $V\subset \mathbb Z^d$.

\subsection{Correlation inequalities}

Recall that if $A\subset \mathbb Z^d$ is finite,  $\sigma_A=\prod_{x\in A} \sigma_x$.

\begin{Prop}[\hspace{1pt}{\cite{GriffithsCorrelationsIsing1-1967}}]\label{prop: Griffiths} Let $G=(V,E,J)$ be a weighted graph and $\#$ be either free or plus boundary conditions. For every finite $A,B\subset V$, one has
\begin{equation*}
	\langle \sigma_A\rangle_{G}^\#\geq 0, \qquad \langle \sigma_A;\sigma_B\rangle_{G}^\#
\geq 0.
\end{equation*}
\end{Prop}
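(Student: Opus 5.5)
The plan is to reduce everything to the free-boundary Ising model on a finite weighted (multi-)graph with nonnegative couplings. For that model we prove both inequalities through expansions with nonnegative coefficients.

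\emph{Reduction of the plus case.} Let $\Lambda^\fg$ be the graph with the ghost vertex. The measure $\nu_{\Lambda^\fg}$ is invariant under the global flip $\sigma\mapsto-\sigma$, so for any function $f$ we have
\[
\langle f(\sigma)\rangle^+=\langle f(\sigma_\fg\sigma)\rangle_{\Lambda^\fg}.
\]
This holds because the two conditional measures given $\sigma_\fg=\pm1$ are images of each other under the flip. Taking $f=\sigma_A$ gives
\[
\langle\sigma_A\rangle^+=\langle\sigma_{A'}\rangle_{\Lambda^\fg},
\]
where $A'=A$ if $|A|$ is even and $A'=A\cup\{\fg\}$ if $|A|$ is odd. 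Since $\sigma_\fg^2=1$, the product $\sigma_{A'}\sigma_{B'}$ equals $\sigma_{(A\cup B)'}$ with multiplicities reduced mod $2$. The same identity therefore turns $\langle\sigma_A;\sigma_B\rangle^+$ into $\langle\sigma_{A'};\sigma_{B'}\rangle_{\Lambda^\fg}$, a free-boundary quantity. Hence it suffices to treat the free model on an arbitrary finite weighted graph $G$.

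\emph{First inequality.} Expand each Boltzmann factor as
\[
e^{J_{xy}\sigma_x\sigma_y}=\sum_{n\ge0}\frac{J_{xy}^n}{n!}(\sigma_x\sigma_y)^n.
\]
Sum over $\sigma$ and use that $\sum_{\sigma}\prod_x\sigma_x^{k_x}$ equals $2^{|V|}$ if every $k_x$ is even and $0$ otherwise. This gives the current expansion
\[
\sum_\sigma\sigma_Ae^{-H_G(\sigma)}=2^{|V|}\sum_{\partial\n=A}w_G(\n)=2^{|V|}Z^A_G,
\]
so $\langle\sigma_A\rangle_G=Z^A_G/Z^\emptyset_G\ge0$, since all weights are nonnegative. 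More generally, the same computation shows that $\sum_\sigma F(\sigma)e^{-\tilde H(\sigma)}\ge0$ whenever $F$ is a monomial with nonnegative coefficient and $\tilde H$ is minus a nonnegative combination of products $\sigma_x\sigma_y$.

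\emph{Second inequality.} Use the duplicate-variable trick. Take two independent copies $\sigma,\sigma'$, so that
\[
\langle\sigma_A;\sigma_B\rangle=\tfrac12\big\langle(\sigma_A-\sigma'_A)(\sigma_B-\sigma'_B)\big\rangle_{G}^{\otimes2}.
\]
Change variables $\sigma'_x=\sigma_x\tau_x$ with $\tau\in\{\pm1\}^V$. This is a bijection, and under it:
\begin{itemize}
\item $\sigma_A-\sigma'_A=\sigma_A(1-\tau_A)$;
\item the joint weight becomes $\exp\big(\sum_{xy}J_{xy}(1+\tau_x\tau_y)\sigma_x\sigma_y\big)$.
\end{itemize}
For fixed $\tau$, the couplings $J_{xy}(1+\tau_x\tau_y)$ are nonnegative, and the prefactor $(1-\tau_A)(1-\tau_B)$ is nonnegative. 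The unnormalised quantity is
\[
\sum_\tau(1-\tau_A)(1-\tau_B)\sum_\sigma\sigma_A\sigma_B\,e^{\sum J_{xy}(1+\tau_x\tau_y)\sigma_x\sigma_y}.
\]
The inner sum is $\ge0$ by the first step applied with these modified couplings. Dividing by the positive partition function gives $\langle\sigma_A;\sigma_B\rangle_G\ge0$.

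There is no real obstacle. The only points needing care are two bookkeeping steps: the flip-symmetry identity that removes the conditioning on $\sigma_\fg=1$, including the parity reduction $\sigma_\fg^2=1$, and checking that multigraph edges to $\fg$ are handled identically by the current expansion.
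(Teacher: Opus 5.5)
Your argument is correct and complete. The paper gives no proof of this proposition: it is quoted from Griffiths as a classical input, so there is no internal argument to compare against. What you supply is the standard self-contained route, in three steps:
\begin{itemize}
\item The flip-symmetry identity $\langle f\rangle^+=\langle f(\sigma_\fg\sigma)\rangle_{\Lambda^\fg}$ reduces plus boundary conditions to the free measure on $\Lambda^\fg$ with even sets $A',B'$. This is consistent with how the paper later writes plus correlations as current ratios on $\Lambda^\fg$ with even source sets in $V\cup\{\fg\}$.
\item The current expansion gives $\langle\sigma_A\rangle_G=Z^A_G/Z^\emptyset_G\ge 0$.
\item The duplicate-variable substitution $\sigma'_x=\sigma_x\tau_x$ handles the truncated correlation.
\end{itemize}

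If you wanted to stay within the paper's own toolbox, the switching lemma (Lemma~\ref{lem:switching}) gives the second inequality directly. For even $A,B$,
\[
Z^{A\Delta B}_G Z^\emptyset_G - Z^A_G Z^B_G=\sum_{\partial\n=A\Delta B,\ \partial\m=\emptyset} w_G(\n)\,w_G(\m)\,\mathbbm{1}\{\widehat{\n+\m}\notin\mathcal{F}_A\}\ge 0 .
\]
The odd cases cause no trouble. In the free model, an odd $A$ or $B$ forces $\langle\sigma_A\rangle_G\langle\sigma_B\rangle_G=0$, so the claim reduces to the first inequality. In the plus model, your ghost reduction makes both sets even anyway.

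The switching route buys an exact probabilistic formula for the truncated correlation; this is precisely Corollary~\ref{coro:expression truncated switching}, which the paper relies on later. Your duplicate-variable argument is more elementary, since it needs no combinatorial switching, and it extends verbatim to general ferromagnetic many-body interactions.
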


\begin{Prop}[\hspace{1pt}{\cite{GriffithsCorrelationsIsing1-1967,kelly1968general}}]\label{prop:consequencegriffiths} Let $G=(V,E,J)$ and $H=(V',E',J')$ be two weighted graphs such that $G\subset H$ (in the sense that $V\subset V', E\subset E'$, and $J\leq J'$). For every finite $A\subset V$, one has
\begin{equation}\label{eq:monot Griffiths}
	\langle \sigma_A\rangle_{G}\leq \langle \sigma_A\rangle_{H}.
\end{equation}
\end{Prop}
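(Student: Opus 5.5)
The statement to prove is Proposition~\ref{prop:consequencegriffiths}: $\langle\sigma_A\rangle_G$ is monotone under enlarging the weighted graph. I will reduce it to Proposition~\ref{prop: Griffiths} by differentiating along a path of weights. Extend $J$ by zero to the edges of $E'\setminus E$. An isolated vertex of $V'\setminus V$ (no edge of positive weight) only contributes a factor $2$ to both numerator and denominator, so it does not change $\langle\sigma_A\rangle$. Hence $\langle\sigma_A\rangle_G=\langle\sigma_A\rangle_{(V',E',J)}$, and we may assume $V=V'$, $E=E'$ and $0\le J\le J'$ pointwise.

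For $t\in[0,1]$ set $J(t):=J+t(J'-J)$ and $f(t):=\langle\sigma_A\rangle_{(V',E',J(t))}$. Since $V'$ is finite, $f$ is a ratio of finite sums of exponentials with positive denominator, hence smooth. Differentiating $\sum_\sigma\sigma_A e^{-H(\sigma)}/Z$ with respect to a single weight $J_{xy}$ gives
\[
\frac{\partial}{\partial J_{xy}}\langle\sigma_A\rangle=\langle\sigma_A\sigma_x\sigma_y\rangle-\langle\sigma_A\rangle\langle\sigma_x\sigma_y\rangle=\langle\sigma_A;\sigma_{\{x,y\}}\rangle .
\]
By the chain rule,
\[
f'(t)=\sum_{xy\in E'}(J'_{xy}-J_{xy})\,\langle\sigma_A;\sigma_{\{x,y\}}\rangle_{(V',E',J(t))}.
\]

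Each truncated correlation above is nonnegative by the second Griffiths inequality in Proposition~\ref{prop: Griffiths}, applied with free boundary conditions. That inequality requires nonnegative couplings, which holds since $J(t)\ge 0$. Each coefficient $J'_{xy}-J_{xy}$ is also nonnegative. Therefore $f'\ge0$ on $[0,1]$, and so $f(0)\le f(1)$, which is exactly \eqref{eq:monot Griffiths}.

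Two points need care; neither is a real obstacle. First, the convention for $\sigma_A$ when a vertex appears with multiplicity: in that case $\sigma_x^2=1$, so it reduces to the symmetric difference, and Proposition~\ref{prop: Griffiths} covers all finite $A,B$ in any case. Second, in the zero-extension step the positivity of the weights is what allows Proposition~\ref{prop: Griffiths} to apply along the whole path $J(t)$.
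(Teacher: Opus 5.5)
Your proof is correct. The paper gives no proof of its own and only cites Griffiths and Kelly--Sherman, and your argument is the classical one behind that citation: extend $J$ by zero, interpolate linearly, and use $\partial_{J_{xy}}\langle\sigma_A\rangle=\langle\sigma_A;\sigma_x\sigma_y\rangle\geq 0$, which is the second Griffiths inequality of Proposition~\ref{prop: Griffiths}.
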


We now recall the FKG inequality for the Ising model.

\begin{Prop}\label{prop:FKG}
Let $G=(V,E,J)$ be a weighted graph and $\#$ be either free or plus boundary conditions. Let $f,g:\{-1,1\}^V\rightarrow \mathbb R$ be increasing and bounded functions. Then,
\begin{equation*}
    \langle f; g\rangle_G^\#:=\langle fg\rangle_G^\#-\langle f\rangle^\#_G\langle g\rangle^\#_G\geq 0.
\end{equation*}
\end{Prop}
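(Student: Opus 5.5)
The plan is to derive the statement from the classical FKG theorem for strictly positive measures on the distributive lattice $\{-1,1\}^V$ with the coordinatewise partial order. The key step is to check that the relevant Ising measure satisfies the \emph{FKG lattice condition}
\begin{equation*}
\mu(\sigma\vee\tau)\,\mu(\sigma\wedge\tau)\geq \mu(\sigma)\,\mu(\tau)\qquad\text{for all }\sigma,\tau,
\end{equation*}
where $\vee$ and $\wedge$ denote the coordinatewise maximum and minimum.

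First consider free boundary conditions, i.e.\ $\mu=\nu_G$. Since $\nu_G(\sigma)\propto e^{-H_G(\sigma)}$ is strictly positive, the lattice condition is equivalent to
\begin{equation*}
\sum_{xy}J_{xy}\Big[(\sigma\vee\tau)_x(\sigma\vee\tau)_y+(\sigma\wedge\tau)_x(\sigma\wedge\tau)_y\Big]\geq \sum_{xy}J_{xy}\big[\sigma_x\sigma_y+\tau_x\tau_y\big].
\end{equation*}
Because $J_{xy}\geq 0$, it suffices to prove the inequality edge by edge. If $\sigma_x\le\tau_x$ and $\sigma_y\le\tau_y$, or if both reverse inequalities hold, then the two sides coincide. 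Otherwise, say $\sigma_x<\tau_x$ and $\sigma_y>\tau_y$, so that $\sigma_x=-1$, $\tau_x=1$, $\sigma_y=1$, $\tau_y=-1$. Then the left side equals $1+1=2$, while the right side equals $-1-1=-2$. The inequality therefore holds in every case.

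For plus boundary conditions, $\nu^+_G=\nu_{G^\fg}(\cdot\mid\sigma_\fg=1)$ is a strictly positive measure on $\{-1,1\}^{V}$. Its Hamiltonian is the same nearest-neighbour interaction plus a linear term $\sum_x h_x\sigma_x$, where $h_x\ge 0$ collects the weights of the edges from $x$ to $\fg$. Linear terms satisfy $(\sigma\vee\tau)_x+(\sigma\wedge\tau)_x=\sigma_x+\tau_x$, so they cancel in the lattice condition, and the edge-by-edge argument above goes through unchanged.

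It remains to deduce positive correlations from the lattice condition. I would do this by induction on $|V|$. For $|V|=1$, the inequality $\langle fg\rangle\ge\langle f\rangle\langle g\rangle$ is Chebyshev's sum inequality for two increasing functions of a single totally ordered variable. For the inductive step, pick a vertex $v$ and condition on $\sigma_v$. The conditional measures on the remaining coordinates again satisfy the lattice condition, so by induction the conditional covariance of $f$ and $g$ is nonnegative. The lattice condition also implies that $\mu(\sigma_v=1\mid\sigma_{V\setminus v})$ is increasing in $\sigma_{V\setminus v}$. Using this monotonicity, one shows that the conditional expectations $\mathbb E[f\mid\sigma_v]$ and $\mathbb E[g\mid\sigma_v]$ are increasing functions of $\sigma_v$. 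Applying the one-variable case to these two functions and adding the conditional covariance gives the claim.

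The only delicate point is this monotonicity of the conditional expectations. It is cleanest to obtain it from Holley's inequality, which compares $\mu(\cdot\mid\sigma_v=1)$ with $\mu(\cdot\mid\sigma_v=-1)$. Holley's inequality itself can be proved by constructing a monotone coupling of the two measures, for instance via a monotone heat-bath dynamics. Everything else in the argument is routine.
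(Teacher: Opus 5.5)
Your proposal is correct. The paper gives no proof of this proposition; it recalls it as the classical FKG inequality. Your argument is the standard textbook proof: check the FKG lattice condition edge by edge, observe that the field coming from the plus boundary condition cancels (its sign is in fact irrelevant), and then invoke the FKG theorem for strictly positive measures satisfying the lattice condition.

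Two small remarks on the second half.

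First, if you invoke Holley's inequality anyway, the induction on $|V|$ is superfluous. Take $g$ increasing and, after adding a constant, positive, and set $\mu_1=g\mu/\mu(g)$ and $\mu_2=\mu$. This pair satisfies Holley's condition $\mu_1(\sigma\vee\tau)\mu_2(\sigma\wedge\tau)\geq\mu_1(\sigma)\mu_2(\tau)$, by monotonicity of $g$ and the lattice condition. The resulting domination $\mu_1(f)\geq\mu_2(f)$ is exactly $\mu(fg)\geq\mu(f)\mu(g)$.

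Second, your induction can be closed without Holley or any coupling. Write $\mu_\pm:=\mu(\cdot\mid\sigma_v=\pm1)$ on $\{-1,1\}^{V\setminus\{v\}}$. Then $\mu_+$ is the tilt of $\mu_-$ by $\eta\mapsto\mu(\eta,+1)/\mu(\eta,-1)$. This function is increasing, by the lattice condition applied to $(\eta,+1)$ and $(\eta',-1)$ with $\eta\leq\eta'$. So the induction hypothesis applied to $\mu_-$ already gives $\mu_+(\phi)\geq\mu_-(\phi)$ for increasing $\phi$. Hence $\mu_+(f(\cdot,+1))\geq\mu_+(f(\cdot,-1))\geq\mu_-(f(\cdot,-1))$, which is the monotonicity of $\mathbb{E}[f\mid\sigma_v]$ that you flagged as the delicate point.
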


Finally, we will need an inequality due to Lebowitz \cite{Lebowitz1977CoexistencePhasesIsing} (see e.g.~\cite{gunaratnam2026supercritical} for a modern proof). To state it, observe that we can classically enhance the definition of the Ising model from above by adding an external magnetic field term: if $G=(V,E,J)$ is a finite weighted graph and $\mathsf{h}\in \mathbb R^V$ is a (non-homogeneous) external magnetic field, we may define $H_{G,\mathsf{h}}(\sigma):=H_G(\sigma)+\sum_{x\in V}\mathsf{h}_x\sigma_x$. The resulting Gibbs measure is denote $\nu_{G,\mathsf{h}}$ (or $\langle \cdot \rangle_{G,\mathsf{h}}$).

\begin{Prop} \label{prop: lebowitz} 
Fix a finite weighted graph $G = (V,E,J)$. Consider two external magnetic fields $\mathsf{h}$ and $\mathsf{h}'$ satisfying $|\mathsf{h}'_x|\le \mathsf{h}_x$ for every $x\in V$. For every $A, B\subset V$, one has
\begin{equation*}
\langle \sigma_A \sigma_B \rangle - \langle \sigma_A \sigma_B \rangle' \ge |\langle \sigma_A \rangle \langle\sigma_B \rangle' - \langle \sigma_A \rangle' \langle \sigma_B \rangle|,
\end{equation*}
where $\langle \cdot \rangle$ and $\langle \cdot \rangle'$ stand for the expectations under the measures $\nu_{G,\mathsf{h}}$ and $\nu_{G,\mathsf{h}'}$ respectively.
\end{Prop}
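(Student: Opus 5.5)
We prove the Lebowitz inequality by duplicating the system and applying a Griffiths-type positivity argument in rotated variables. Let $\langle\cdot\rangle\otimes\langle\cdot\rangle'$ be the product measure on pairs $(\sigma,\sigma')\in\Sigma_G\times\Sigma_G$, where $\sigma\sim\nu_{G,\mathsf h}$ and $\sigma'\sim\nu_{G,\mathsf h'}$ are independent. Expanding the products gives
\begin{align*}
\langle(\sigma_A-\sigma'_A)(\sigma_B+\sigma'_B)\rangle_{\otimes}&=\langle\sigma_A\sigma_B\rangle-\langle\sigma_A\sigma_B\rangle'+\langle\sigma_A\rangle\langle\sigma_B\rangle'-\langle\sigma_A\rangle'\langle\sigma_B\rangle,\\
\langle(\sigma_A+\sigma'_A)(\sigma_B-\sigma'_B)\rangle_{\otimes}&=\langle\sigma_A\sigma_B\rangle-\langle\sigma_A\sigma_B\rangle'-\langle\sigma_A\rangle\langle\sigma_B\rangle'+\langle\sigma_A\rangle'\langle\sigma_B\rangle.
\end{align*}
The claimed inequality, with both choices of sign in the absolute value, is therefore equivalent to the nonnegativity of these two expectations. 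That is the goal.

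\textbf{Rotated variables.} Introduce $q_x:=(\sigma_x+\sigma'_x)/2$ and $r_x:=(\sigma_x-\sigma'_x)/2$. Then $(q_x,r_x)\in\{(\pm1,0),(0,\pm1)\}$, and $\sigma_x=q_x+r_x$, $\sigma'_x=q_x-r_x$.
\begin{itemize}
\item Expanding $\prod_{x\in A}(q_x+r_x)\pm\prod_{x\in A}(q_x-r_x)$ gives twice the sum of the monomials containing an even (respectively odd) number of $r$'s. Both are polynomials in $(q,r)$ with nonnegative coefficients, so the observables $\sigma_A\pm\sigma'_A$ and $\sigma_B\pm\sigma'_B$, and hence the products in question, have nonnegative coefficients.
\item The joint Boltzmann weight is $\exp\big(\sum_{xy}J_{xy}(\sigma_x\sigma_y+\sigma'_x\sigma'_y)+\sum_x(\mathsf h_x\sigma_x+\mathsf h'_x\sigma'_x)\big)$, with the sign convention in which a positive field favours $+$ spins; this is the convention under which the statement is meant. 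In the new variables it reads
\[
\exp\Big(\sum_{xy}2J_{xy}(q_xq_y+r_xr_y)+\sum_x\big((\mathsf h_x+\mathsf h'_x)q_x+(\mathsf h_x-\mathsf h'_x)r_x\big)\Big).
\]
The hypothesis $|\mathsf h'_x|\le \mathsf h_x$ is exactly what makes both field coefficients nonnegative, and $J\ge0$ handles the couplings.
\end{itemize}

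\textbf{GKS-type positivity.} Expand the exponential as a power series, which has nonnegative coefficients, and multiply by the positive polynomial observable. The unnormalised expectation becomes a nonnegative combination of a priori moments $\prod_x\mathbb E_0[q_x^{a_x}r_x^{b_x}]$. Here $\mathbb E_0$ is the uniform measure on the four single-site values, so these are moments under a product measure. Each single-site distribution is invariant under $q\mapsto-q$ and under $r\mapsto-r$ separately. Hence the moment vanishes unless $a_x,b_x$ are both even, in which case it is nonnegative. Dividing by the positive partition function gives both expectations $\ge0$, which concludes the proof.

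The only point requiring care is the middle step: checking the sign convention for the field and that the hypothesis on $\mathsf h,\mathsf h'$ gives nonnegative coefficients for both $q$ and $r$. The remaining steps are routine bookkeeping.
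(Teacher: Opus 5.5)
Your proof is correct and complete, and there is no gap. The paper gives no proof of this proposition: it quotes it from Lebowitz \cite{Lebowitz1977CoexistencePhasesIsing} and points to \cite{gunaratnam2026supercritical} for a modern proof, so there is no in-paper argument to compare against.

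What you wrote is the classical Ginibre duplicated-variable argument, which is the standard route to this inequality. The two expansions reduce the claim to nonnegativity of $\langle(\sigma_A\mp\sigma'_A)(\sigma_B\pm\sigma'_B)\rangle_\otimes$. The identity $\sigma_x\sigma_y+\sigma'_x\sigma'_y=2(q_xq_y+r_xr_y)$, together with $\mathsf h_x\pm\mathsf h'_x\ge0$, makes the joint weight a series with nonnegative coefficients in $(q,r)$. The single-site moments $\mathbb E_0[q^ar^b]$ are nonnegative by the two reflection symmetries. The argument covers arbitrary (not necessarily even) $A,B$, multigraphs and overlapping $A\cap B$ without change.

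Your caution about the sign convention is substantive, not cosmetic. The paper's displayed $H_{G,\mathsf h}=H_G+\sum_x\mathsf h_x\sigma_x$, with weight $e^{-H}$, literally makes a positive field favour $-$ spins. Under that reading the statement fails whenever $|A|+|B|$ is odd. For example, take one vertex, $A=\{x\}$, $B=\emptyset$ and $\mathsf h'_x=0<\mathsf h_x$: the inequality would read $-\tanh\mathsf h_x\ge\tanh\mathsf h_x$. Remark~\ref{rem:twist of lebowitz}, where $\mathsf h_x\to\infty$ conditions on $\sigma_x=1$, confirms that the intended convention is the one you used.
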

\begin{Rem}\label{rem:twist of lebowitz} Fix $x\in V$ and consider an external magnetic field $h$ such that $h_y=0$ for all $y\in V\setminus \{x\}$. If we take the limit $\mathsf{h}_x\rightarrow \infty$, then $\nu_{G,\mathsf{h}}\rightarrow \nu_{G}(\ \cdot \mid \sigma_x=1)$. In particular, the above result holds with $\langle \cdot\rangle=\langle\ \cdot \mid  \sigma_x=1\rangle_{G}$ and $\langle \cdot \rangle'=\langle \cdot\rangle_{G}$.
\end{Rem}

\subsection{Random cluster model}

Let $G=(V,E,J)$ be a finite weighted (multi-)graph. The \emph{free} FK-Ising measure on $G$ is given, for $\omega\in \{0,1\}^E$, by 
\[
\phi_{G}[\omega]=\frac{1}{Z^{\rm FK}_G}\prod_{e\in E} (1-e^{-2J_e})^{\omega_e} (e^{-2J_e})^{1-\omega_e} \; 2^{k(\omega)},
\]
where $Z^{\rm FK}_G$ is a normalisation constant and $k(\omega)$ denotes the number of connected components of the graph with vertex set $V$ and edge set $\{e\in E: \omega_e=1\}$.

Given $H$ a finite weighted subgraph of $\mathbb Z^d$ and $\xi$ a partition of $\partial V$, we write $H^{\xi}$ to denote the multigraph obtained from $H$ after identifying vertices in the same partition class of $\xi$ to a single vertex. 
When $G=H^{\xi}$, we will simply write $\phi^{\xi}_{H}$.

\paragraph{Edwards-Sokal coupling.} 

We now recall how to couple the Ising model with its random cluster measure.

\vspace{3pt}

\noindent\textit{From percolation to spin model.} Given $\omega\sim \phi_{G}$, we can sample a spin configuration $\sigma\sim \langle \cdot \rangle_{G}$ as follows.
\begin{enumerate}
\item[(i)] Consider a sequence of independent random variables $\sigma_{\mathcal{C}}\in \{\pm 1\}$, indexed by the connected components $\mathcal{C}$ of $\omega$, such that $\mathbb{P}[\sigma_{\mathcal{C}}=1]=1/2$.
\item[(ii)] For every cluster $\mathcal{C}$ and $x\in \mathcal{C}$, set $\sigma_x =\sigma_{\mathcal{C}}$.
\end{enumerate}

\noindent \textit{From spin model to percolation.} Conversely, given a spin configuration $\sigma\sim \langle \cdot \rangle_{G}$, we can sample a percolation configuration $\omega\sim \phi_G$ as follows.

\begin{enumerate}
\item[(i)] For each edge $xy\in E$ such that $\sigma_x\neq \sigma_y$, set $\omega_{xy}=0$. 
\item[(ii)] For each edge $xy\in E$ such that $\sigma_x=\sigma_y$, let $\omega_{xy}=1$ with probability equal to $1-e^{-2J_e}$, independently of the other edges.
\end{enumerate}
The above procedure allows to relate the two-point function of the Ising model to that of its associated random cluster representation.

\begin{Coro}\label{cor: ES correlations}
Let $G=(V,E,J)$ be a finite weighted graph. For every $x,y\in V$, 
\begin{align}\label{eq: es cor: 3}
\langle \sigma_x \sigma_y \rangle_{G}
&=
\phi_{G}[x\longleftrightarrow y]. 
\end{align}
More generally, given any $A \subset V$ of even cardinality,
\begin{equation} \label{eq: edwards-sokal-genA}
    \langle \sigma_A \rangle_G = \phi_G[\mathcal F_A],
\end{equation}
where $\mathcal F_A$ is the event that every connected component of $\omega$ intersects $A$ an even number of times.

\end{Coro}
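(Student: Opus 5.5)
The statement to prove is Corollary~\ref{cor: ES correlations}. We use the Edwards--Sokal coupling described just above: sample $\omega\sim\phi_G$, then assign i.i.d.\ uniform signs $\sigma_{\mathcal C}\in\{\pm1\}$ to the clusters $\mathcal C$ of $\omega$. Since the resulting $\sigma$ has law $\langle\cdot\rangle_G$, we may write $\langle\sigma_A\rangle_G=\mathbb E[\sigma_A]$, where $\mathbb E$ is the expectation under the joint coupling. The plan is to condition on $\omega$ and compute the conditional expectation of $\sigma_A$ explicitly.

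Given $\omega$, group the points of $A$ according to the cluster containing them:
\[
\sigma_A=\prod_{x\in A}\sigma_x=\prod_{\mathcal C}\sigma_{\mathcal C}^{|A\cap\mathcal C|}.
\]
By conditional independence of the cluster signs,
\[
\mathbb E[\sigma_A\mid\omega]=\prod_{\mathcal C}\mathbb E\big[\sigma_{\mathcal C}^{|A\cap \mathcal C|}\big].
\]
Each factor equals $1$ if $|A\cap\mathcal C|$ is even, since then $\sigma_{\mathcal C}^{|A\cap\mathcal C|}=1$. It equals $0$ if $|A\cap\mathcal C|$ is odd, since $\sigma_{\mathcal C}$ is a symmetric $\pm1$ variable. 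Only clusters meeting $A$ matter, and all other clusters contribute a factor $1$. Hence
\[
\mathbb E[\sigma_A\mid\omega]=\mathbbm 1_{\mathcal F_A}(\omega),
\]
and taking expectations over $\omega$ gives \eqref{eq: edwards-sokal-genA}.

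For \eqref{eq: es cor: 3}, take $A=\{x,y\}$. If $x=y$, both sides equal $1$. If $x\neq y$, the event $\mathcal F_{\{x,y\}}$ requires that the cluster of $x$ contain $y$ as well. Indeed, otherwise the clusters of $x$ and of $y$ would each intersect $A$ exactly once. Conversely, if $x\leftrightarrow y$, then one cluster meets $A$ twice and every other cluster meets it zero times. So $\mathcal F_{\{x,y\}}=\{x\leftrightarrow y\}$.

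The only point needing care is the validity of the coupling itself, namely that the first marginal is $\langle\cdot\rangle_G$. We take this from the description above. If a check were wanted, it would come from the joint weight
\[
\prod_e\big[(1-p_e)\mathbbm 1_{\omega_e=0}+p_e\mathbbm 1_{\omega_e=1}\mathbbm 1_{\sigma_x=\sigma_y}\big],\qquad p_e=1-e^{-2J_e}.
\]
Summing this weight over $\sigma$ gives $2^{k(\omega)}$ times the FK weight. Summing it over $\omega$ gives $\prod_e e^{J_e(\sigma_x\sigma_y-1)}$, which is proportional to the Ising weight. There is no real obstacle beyond this.
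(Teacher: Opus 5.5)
Your proof is correct and follows the route the paper intends. The paper gives no written proof and presents the corollary as an immediate consequence of the Edwards--Sokal coupling; conditioning on $\omega$ to obtain $\mathbb E[\sigma_A\mid\omega]=\mathbbm{1}_{\mathcal F_A}(\omega)$, with the joint-weight check of the coupling's marginals, is exactly that standard derivation.
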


Recall the definition of $U(n)$ from above Theorem \ref{thm:unique_currents}. The following is the analogue of Theorem~\ref{thm:unique_currents}, but for the FK-Ising model, which was proved by Bodineau~\cite{Bod05} (see also \cite{SeveroSlabIsing2023}). 
\begin{Prop}[Supercitical sharpness for FK-Ising] \label{prop: supercritical sharpness FK-Ising}
Let $d\geq 2$ and $\beta>\beta_c$. Then, 
\begin{equation*}
\lim_{n\to \infty} \inf_{\xi}\phi^{\xi}_{\Lambda_{10n}[\beta]}[U(n)]=1,
\end{equation*}
where the infimum is taken over all partitions of $\partial\Lambda_{10n}$.
\end{Prop}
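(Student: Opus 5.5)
Since the statement concerns the FK-Ising model alone, I would prove it with the sprinkling strategy the introduction attributes to \cite{SeveroSlabIsing2023}, in three steps. Throughout, fix $\beta_c<\beta'<\beta$. The main tool is the monotonicity of FK-Ising in the edge-weight and in the boundary partition $\xi$. Two consequences drive the plan. First, for increasing events the free boundary condition is the worst case. Second, one can couple $\phi^\xi_{\Lambda[\beta]}$ so that it dominates $\omega'\vee\eta$, where $\omega'\sim\phi^\xi_{\Lambda[\beta']}$ and $\eta$ is an independent Bernoulli$(\delta)$ edge configuration with $\delta=\delta(\beta,\beta')>0$. This coupling follows by comparing the conditional probabilities of opening an edge given the rest, which lie between $1-e^{-2\beta}$ and $(1-e^{-2\beta})/(2-(1-e^{-2\beta}))$.

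\textbf{Step 1 (uniform connection bounds at $\beta'$).} Take an intermediate $\beta''\in(\beta_c,\beta')$. I would show that under the free measure on a box of size comparable to $\ell$, two sub-boxes of size $\ell$ at distance $O(\ell)$ are connected with probability bounded below uniformly in $\ell$. The input is $m(\beta'')>0$, converted into a lower bound on the free two-point function $\langle\sigma_x\sigma_y\rangle_{\Lambda[\beta'']}$ via Griffiths (Proposition~\ref{prop:consequencegriffiths}) and Lebowitz's inequality (Proposition~\ref{prop: lebowitz} with Remark~\ref{rem:twist of lebowitz}). Corollary~\ref{cor: ES correlations} then turns this into connection probabilities.

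\textbf{Step 2 (surface-order bound; the main obstacle).} Using Step 1, I would then upgrade to a surface-order estimate at $\beta'$:
\[
\phi^{\xi}_{\Lambda_{10n}[\beta']}\big[\text{two given clusters of diameter}\ge n\text{ are not joined by open paths in }\Lambda_{8n}\setminus\Lambda_n\big]\le e^{-cn^{d-1}}\ \text{(suitably formulated)},
\]
uniformly in $\xi$. This is the genuinely hard input, essentially Bodineau's slab result \cite{Bod05}: one must show that percolation occurs already in thick slabs for every $\beta'>\beta_c$, and then run a Pisztora-type renormalisation. I would first try the renormalisation driven by the free-boundary lower bounds of Step 1, using that the free measure is the worst case for the increasing crossing events. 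I expect the delicate point to be producing enough disjoint connection attempts across an interface of size $n^{d-1}$, because FK lacks independence.

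\textbf{Step 3 (sprinkling gives uniqueness).} Sample $\omega'\vee\eta$ via the coupling above. Existence of a crossing cluster of $\Lambda_{8n}\setminus\Lambda_n$ is increasing, so it follows from Step 1 and Step 2 under the free measure and hence under every $\xi$. For uniqueness, I would follow \cite{benjaminitassion}. Step 2 implies that, outside an event of probability at most $e^{-cn^{d-1}}$, any two $\omega'$-clusters crossing $\Lambda_{4n}\setminus\Lambda_{2n}$ come within distance $O(1)$ of each other along many disjoint locations in the annulus, at least $cn^{d-1}$ up to polylogarithmic losses. Each such location is closed by $\eta$ with probability bounded below independently, so a given pair fails to merge with probability at most $e^{-c'n^{d-1}/\log^C n}$. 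A union bound over pairs of crossing clusters then completes the argument: there are at most polynomially many distinct pairs after grouping them by their traces on a crossing sphere. Since everything is uniform in $\xi$, taking $n\to\infty$ yields the proposition.
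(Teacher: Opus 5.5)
The paper does not prove this proposition; it imports it from \cite{Bod05} (see also \cite{SeveroSlabIsing2023}). Your outline does not replace that citation. Its core, Step 2, is precisely the content of the statement, and you leave it open; the route you suggest for it is circular. Step 1 only gives connection probabilities bounded away from $0$ uniformly in the scale. A Pisztora-type renormalisation needs block events whose probability tends to $1$ uniformly in the boundary condition. The relevant block events (existence \emph{and uniqueness} of crossing clusters) are not increasing, so the reduction to free boundary conditions does not apply to them. Establishing such block estimates is the proposition itself. Conversely, if you take slab percolation from \cite{Bod05} as the input of Step 2, then combined with Pisztora's coarse-graining it already yields the local-uniqueness statement, and Step 3 becomes superfluous. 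Note also that the display in Step 2 is ill-posed: distinct clusters are never joined. The meaningful input is a box-to-box or box-to-boundary disconnection bound, as in Remark~\ref{rem: connectivity fk}.

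Step 3 has two further gaps, even granting such a surface-order bound. First, a union bound only gives that the boundary clusters are \emph{collectively} $O((\log n)^{1/(d-1)})$-dense, i.e.\ an event like $\mathcal{A}_L$. It does not give that a \emph{given pair} of crossing clusters comes within distance $O(1)$ at $\gtrsim n^{d-1}$ disjoint places, since two crossing clusters may be separated by other clusters. So your per-pair ``independent attempts'' bound has no basis. Handling merging through intermediate clusters is exactly what the Benjamini--Tassion argument does (Proposition~\ref{prop:uniqueness with sprinkling}, from \cite{SeveroSlabIsing2023}), and it requires $\ell_L=o(\log L)$.

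Second, ``all $\omega'$-crossing clusters merge in $\omega'\vee\eta$'' is uniqueness \emph{with} sprinkling, not $U(n)$. It does not rule out new crossing clusters made of sprinkled edges. Moreover, since $U(n)$ is not monotone, the domination $\phi^\xi_{\Lambda[\beta]}\succeq\omega'\vee\eta$ cannot transfer it to $\phi^\xi_{\Lambda[\beta]}$: the extra edges of $\omega$ may form a separate crossing cluster. This is exactly the obstacle the paper handles for currents, using an \emph{equality in law} (Proposition~\ref{prop:coupling}) together with a second sprinkling and an exploration (Section~\ref{sec:proof uniqueness with sprinkling}).

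For FK you would need an analogue. For instance, write $\phi^\xi_{\Lambda[\beta]}$ exactly as the union projection of the FK measure on the multigraph in which each edge is split into parallel edges of weights $\beta',\beta_1,\beta_2$ with $\beta'+\beta_1+\beta_2=\beta$. Then use that each copy, conditionally on the others, has finite energy and hence dominates Bernoulli percolation. The same splitting plus Holley's criterion is also the correct justification of your domination coupling. Comparing single-edge conditional probabilities of $\omega'\vee\eta$ directly does not work, since that law has no tractable conditionals.
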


\begin{Rem}\label{rem: connectivity fk} Using classical renormalisation arguments (as described below Theorem \ref{thm:unique_currents}), one can use the above local uniqueness result to prove the following estimate, which we will use in the paper: for every $d\geq 2$ and $\beta>\beta_c$, there exists $c>0$ such that for every $N\geq n\geq0$, and every $x,y\in \Lambda_N$, one has
\begin{equation*}
    \phi_{\Lambda_N[\beta]}[\Lambda_{n}(x)\leftrightarrow \Lambda_{n}(y)]\geq 1-e^{-cn^{d-1}}.
\end{equation*}
\end{Rem}

\subsection{Switching lemma}

The following results are classical properties of random currents. We refer to \cite{DuminilLecturesOnIsingandPottsModels2019,Panis2024applications} for proofs.

Fix $G=(V,E,J)$ a finite weighted graph. By  Taylor expanding the exponential, one can show that $\sum_\sigma \sigma_A e^{-H_{G}(\sigma)} = 2^{|V|} Z^A_G$, where we recall that $Z^A_G:=\sum_{\partial\n=A} w_G(\n)$. In particular,
\begin{equation}\label{eq: expression of correlation with currents}
    \langle \sigma_A\rangle_{G}=\frac{Z^A_G}{Z^\emptyset_G}.
\end{equation}
The sums on the numerator and denominator range over distinct subsets of $\Omega_E$. As a result, it is not clear how to give a probabilistic interpretation of the above ratio. This problem can be solved using the following fundamental combinatorial identity.

\begin{Lem}[Switching lemma]\label{lem:switching} Let $G=(V,E,J)$ be a finite weighted graph. For every $A,B\subset V$ even, one has
   \begin{equation*}
      Z^A_G Z^B_G = \sum_{\substack{\partial \n=A\Delta B \\ \partial\m=\emptyset}} w_G(\n)w_G(\m) \mathbbm{1}\{\widehat{\n+\m}\in \mathcal{F}_A\},
   \end{equation*}
   where $\mathcal{F}_A$ denotes the event that $|C\cap A|$ is even for every cluster $C$ of $\widehat{\n+\m}$, and $A\Delta B$ denotes the symmetric difference of the sets $A$ and $B$. 
\end{Lem}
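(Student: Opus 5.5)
The plan is the classical combinatorial proof via multigraphs. The idea is to rewrite both sides as weighted counts of sub-multigraphs of a single multigraph, and to compare the two counts using a symmetric-difference bijection. Throughout, fix $A,B\subset V$ even and write $\n,\m\in\Omega_E$.

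\emph{Step 1 (change of variables).} For a pair $(\n,\m)$ set $\mathbf N:=\n+\m$. Since
\[
\frac{J_e^{\n_e}}{\n_e!}\cdot\frac{J_e^{\m_e}}{\m_e!}=\frac{J_e^{\mathbf N_e}}{\mathbf N_e!}\binom{\mathbf N_e}{\n_e},
\]
we have $w_G(\n)w_G(\m)=w_G(\mathbf N)\prod_e\binom{\mathbf N_e}{\n_e}$. Let $\mathcal M_{\mathbf N}$ be the multigraph on $V$ with $\mathbf N_e$ labelled copies of each edge $e$. Then $\prod_e\binom{\mathbf N_e}{\n_e}$ counts the sub-multigraphs $K\subset\mathcal M_{\mathbf N}$ that use $\n_e$ copies of each $e$. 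Sources add mod $2$, so $\partial\n=A$ and $\partial\m=B$ force $\partial\mathbf N=A\Delta B$. Hence
\[
Z^A_GZ^B_G=\sum_{\partial\mathbf N=A\Delta B} w_G(\mathbf N)\,\#\{K\subset\mathcal M_{\mathbf N}:\partial K=A\}.
\]
The same computation applied to the right-hand side of the lemma, with $\m$ sourceless and $\partial\n=A\Delta B$, gives
\[
\sum_{\partial\mathbf N=A\Delta B} w_G(\mathbf N)\,\mathbbm 1\{\widehat{\mathbf N}\in\mathcal F_A\}\,\#\{K\subset\mathcal M_{\mathbf N}:\partial K=\emptyset\}.
\]
Here $K$ plays the role of $\m$, and $\widehat{\n+\m}=\widehat{\mathbf N}$ depends only on $\mathbf N$. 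It therefore suffices to prove, for each fixed $\mathbf N$,
\[
\#\{K\subset\mathcal M_{\mathbf N}:\partial K=A\}=\mathbbm 1\{\widehat{\mathbf N}\in\mathcal F_A\}\,\#\{K\subset\mathcal M_{\mathbf N}:\partial K=\emptyset\}.
\]

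\emph{Step 2 (necessity of $\mathcal F_A$).} Suppose some $K\subset\mathcal M_{\mathbf N}$ has $\partial K=A$. Restrict $K$ to a cluster $C$ of $\widehat{\mathbf N}$. This restriction is a multigraph whose odd-degree vertices are exactly $A\cap C$. The number of odd-degree vertices of any multigraph is even (handshake lemma), so $|A\cap C|$ is even. Thus the left-hand side vanishes when $\widehat{\mathbf N}\notin\mathcal F_A$.

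\emph{Step 3 (bijection when $\mathcal F_A$ holds).} Assume $\widehat{\mathbf N}\in\mathcal F_A$. Inside each cluster $C$, pair up the points of $A\cap C$ arbitrarily. Connect each pair by a simple path in $\mathcal M_{\mathbf N}$, using one labelled copy per edge. Let $k_0$ be the mod-$2$ symmetric difference, as a set of labelled edges, of all these paths. Each path has boundary equal to its two endpoints, and boundaries add mod $2$, so $\partial k_0=A$. Now define $K\mapsto K\Delta k_0$, the symmetric difference of sets of labelled edges. Degrees mod $2$ are additive under $\Delta$, so this map sends $\{\partial K=\emptyset\}$ into $\{\partial K=A\}$. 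It is an involution on subsets of the edge set of $\mathcal M_{\mathbf N}$, hence a bijection. This proves the identity of Step 1 and hence the lemma.

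The main point requiring care is Step 3. One must work with labelled copies of edges, so that ``sub-multigraph'' and ``symmetric difference'' are well defined and the binomial counts in Step 1 match subsets exactly. The construction of $k_0$ is also exactly where the hypothesis $\mathcal F_A$ enters. The rest is bookkeeping.
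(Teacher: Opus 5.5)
Your proposal is correct and is the classical multigraph proof of the switching lemma. The paper gives no proof of its own and defers to the standard references, where exactly this argument appears: writing $w_G(\n)w_G(\m)=w_G(\n+\m)\prod_e\binom{\n_e+\m_e}{\n_e}$, then using the involution $K\mapsto K\Delta k_0$ with $\partial k_0=A$, where such a $k_0$ exists if and only if $\mathcal F_A$ holds. Letting $K$ represent $\n$ on the left-hand side and $\m$ on the right-hand side handles the paper's $\mathcal F_A$ formulation directly. This agrees with the more common $\mathcal F_B$ version, since the two events coincide on $\{\partial(\n+\m)=A\Delta B\}$.
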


Let $\Lambda\subset \mathbb Z^d$ and $\beta\geq 0$. We can also represent correlation functions under the measure $\langle \sigma\rangle_{\Lambda[\beta]}^+$ in terms of currents on $\Lambda^\fg[\beta]$. Letting $\Omega^\fg_{E(\Lambda)}$ denote the set of currents on $\Lambda^\fg[\beta]$, one has, for every set $A\subset V\cup \{\fg\}$ even
\begin{equation*}
    \langle\sigma_A\rangle_{\Lambda[\beta]}^+=\frac{\sum_{\n \in \Omega_{E(\Lambda)}^\fg: \:\partial \n=A}w_{\Lambda^\fg[\beta]}(\n)}{\sum_{\n \in \Omega_{E(\Lambda)}^\fg: \:\partial \n=\emptyset}w_{\Lambda^\fg[\beta]}(\n)}.
\end{equation*}
The following result is a direct consequence of Lemma \ref{lem:switching}.
\begin{Coro}\label{coro:expression truncated switching} Let $d\geq 2$ and $\beta\geq 0$. For every $\Lambda\subset \mathbb Z^d$ finite and every finite $A,B\subset \Lambda$,
\begin{equation*}
    \langle \sigma_A;\sigma_B\rangle_{\Lambda[\beta]}^+=\langle \sigma_A\sigma_B\rangle_{\Lambda[\beta]}^+\mathbf{P}^{A\Delta B,\emptyset}_{\Lambda^\fg[\beta],\Lambda^\fg[\beta]}[\mathcal{F}_A^c].
\end{equation*}
\end{Coro}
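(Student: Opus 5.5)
We derive Corollary~\ref{coro:expression truncated switching} from the current representation of the plus measure together with the switching lemma (Lemma~\ref{lem:switching}) applied on the graph $G=\Lambda^\fg[\beta]$. The only subtlety is parity. Sets $A,B\subset\Lambda$ of odd cardinality are not admissible source sets by themselves, so we first replace them by the even sets
\[
\tilde A:=A\ \text{or}\ A\cup\{\fg\}, \qquad \tilde B:=B\ \text{or}\ B\cup\{\fg\},
\]
choosing the version of even cardinality in each case. Since we condition on $\sigma_\fg=1$, we have $\sigma_A=\sigma_{\tilde A}$ and $\sigma_B=\sigma_{\tilde B}$ under the plus measure, and $\sigma_A\sigma_B=\sigma_{\tilde A\Delta\tilde B}$.

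\textbf{Step 1: express the truncated correlation.} By the displayed current formula for $\langle\cdot\rangle^+_{\Lambda[\beta]}$, writing $Z^S:=Z^S_{\Lambda^\fg[\beta]}$, we get
\[
\langle\sigma_A\rangle^+\langle\sigma_B\rangle^+=\frac{Z^{\tilde A}Z^{\tilde B}}{(Z^\emptyset)^2},
\qquad
\langle\sigma_A\sigma_B\rangle^+=\frac{Z^{\tilde A\Delta\tilde B}}{Z^\emptyset}.
\]

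\textbf{Step 2: apply the switching lemma.} Lemma~\ref{lem:switching} with the even sets $\tilde A,\tilde B$ gives
\[
Z^{\tilde A}Z^{\tilde B}=\sum_{\partial\n=\tilde A\Delta\tilde B,\ \partial\m=\emptyset}w(\n)w(\m)\,\mathbbm 1\{\widehat{\n+\m}\in\mathcal F_{\tilde A}\}.
\]
Dividing by $Z^{\tilde A\Delta\tilde B}Z^\emptyset$, we obtain
\[
\langle\sigma_A\rangle^+\langle\sigma_B\rangle^+=\langle\sigma_A\sigma_B\rangle^+\,\mathbf P^{\tilde A\Delta\tilde B,\emptyset}_{\Lambda^\fg[\beta],\Lambda^\fg[\beta]}[\mathcal F_{\tilde A}].
\]
Subtracting this from $\langle\sigma_A\sigma_B\rangle^+$ yields
\[
\langle\sigma_A;\sigma_B\rangle^+=\langle\sigma_A\sigma_B\rangle^+\,\mathbf P^{\tilde A\Delta\tilde B,\emptyset}_{\Lambda^\fg[\beta],\Lambda^\fg[\beta]}[\mathcal F_{\tilde A}^c].
\]

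\textbf{Step 3: match notation.} It remains to see that this is the statement. Taking symmetric differences with $\fg$ doesn't change the source set modulo the ghost, so $\tilde A\Delta\tilde B$ is exactly $A\Delta B$ read as an even set with $\fg$ adjoined when needed. For the event, $|C\cap\tilde A|$ differs from $|C\cap A|$ only for the cluster containing $\fg$. On the parity event coming from the sources, that cluster's parity is then determined consistently. The statement's $\mathcal F_A$ is therefore meant with the ghost-augmented set $\tilde A$, and with this reading Step~2 is exactly the claim.

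This bookkeeping around $\fg$ is the only real obstacle. Otherwise the corollary is immediate from the switching lemma, with the plus boundary condition encoded through the ghost vertex.
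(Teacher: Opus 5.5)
Your proof is correct, and it is the argument the paper has in mind when it calls the corollary a direct consequence of the switching lemma on $\Lambda^\fg[\beta]$: adjoin $\fg$ to make $A$ and $B$ even, switch, divide by $Z^{\tilde A\Delta\tilde B}Z^\emptyset$, and take complements. Your reading of $\mathcal F_A$ as $\mathcal F_{\tilde A}$ is the right one, and indeed forced, since for $|A|$ odd the literal event $\mathcal F_A$ is empty; the vague phrase in your Step 3 should be replaced by the simple reason that $|\tilde A|$ is even, so the parity of $|C\cap\tilde A|$ on the cluster of $\fg$ is determined by the other clusters.
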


\section{Proof of Theorem~\ref{thm:unique_currents}}

The proof of Theorem~\ref{thm:unique_currents} will follow rather easily from three key propositions, which we state below. The first one allows us to represent the random current measure on $\Lambda\subset \mathbb{Z}^d$ as a Bernoulli sprinkling of another random current measure on a slightly different (weighted) graph. This is the key novel property that ultimately allows us to avoid the use of the technical multi-valued map principle that appears in \cite{DGR20}, and the proof is very simple. 
The second proposition shows that disconnection probabilities for the double random currents on this modified graph satisfy a surface-order exponential decay. The proof is an adaptation of \cite[Proposition 4.1]{DGR20}, combined with a simple monotonicity statement which allows us to treat arbitrary boundary conditions, i.e.\ sources. This proposition implies, by a simple union bound, that the boundary clusters in $\Lambda_L$ are $C_p(\log L)^{1/(d-1)}$-dense. 
The last proposition, which is imported from \cite{SeveroSlabIsing2023}, says that any Bernoulli sprinkling of such a dense cloud of clusters leads to the local uniqueness event $U(L)$.

For any $\Lambda \subset\mathbb{Z}^d$ finite, we define $\tilde{\Lambda}$ to be the graph obtained by duplicating each edge $xy\in E(\Lambda)$, adding a ``midpoint'' $z=z(xy)$ to one of the copies, and denoting $xy$, $xz$ and $yz$ the edges of $\tilde{\Lambda}$ then defined. Given $0<\beta'<\beta$, we also define the weighted graph $\tilde{\Lambda}[\beta,\beta']$ given by $J_{xy}=\beta'$ and $J_{xz}=J_{yz}=\xi$, where $\xi=\xi(\beta-\beta')$ is such that 
$$\tanh(\xi)^2=\tanh(\beta-\beta').$$ 
\begin{figure}[htb]
    \centering
    \includegraphics[width=0.35\linewidth]{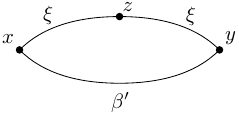}
    \caption{An illustration of (part of) the weighted graph $\tilde{\Lambda}[\beta,\beta']$.}
    \label{fig:doubleedge}
\end{figure}
Recall that for any current $\n$ in $\Lambda$, we define the percolation configuration $\hat{\n}$ in $\Lambda$ by setting $\hat{\n}_{xy}= \mathbbm{1}\{\n(xy)>0\}$. Similarly, for any current $\m$ in $\tilde{\Lambda}$, we can naturally associate the percolation configuration $\hat{\m}$ in  $\Lambda$ by setting $\hat{\m}_{xy}= \mathbbm{1}\{\m(xy)>0 \text{ or } \m(xz)\m(yz)>0\}$.  
The following proposition establishes that $\mathbf{P}^{S}_{\Lambda[\beta]}$ is a sprinkling of $\mathbf{P}^{S}_{ \tilde{\Lambda}[\beta,\beta']}$. This is the main new technical input of our proof, which ultimately allows us to apply the ``uniqueness with sprinkling'' approach of \cite{SeveroSlabIsing2023} to (double) random currents despite its lack of monotonicity in $\beta$. 
The proof is presented in Section~\ref{sec:coupling}.

\begin{Prop}\label{prop:coupling}
Let $0\leq \beta'<\beta$. There exists $\varepsilon=\varepsilon(\beta,\beta')>0$ such that the following holds. For every finite subsets $S\subset \Lambda\subset \mathbb{Z}^d$, if $\n\sim \mathbf{P}^{S}_{\Lambda[\beta]}$ and $\m\sim\mathbf{P}^{S}_{ \tilde{\Lambda}[\beta,\beta']}$ then $\hat{\n}$ is distributed as the union of $\hat{\m}$ with an independent \textup{Ber}$(\varepsilon)$ percolation on $\Lambda$.
\end{Prop}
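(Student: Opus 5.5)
The plan is to compare the two measures edge by edge after conditioning on the \emph{parity field}. For $\n$ on $\Lambda[\beta]$, the parity of edge $xy$ is $\n_{xy}\bmod 2$. For $\m$ on $\tilde\Lambda[\beta,\beta']$, it is the parity of the effective flow between $x$ and $y$, namely $\m_{xy}+\m_{xz}\bmod 2$. Since the midpoint $z=z(xy)\notin S$, the constraint $\partial\m=S$ forces $\m_{xz}\equiv\m_{yz}\pmod 2$, so this is the same as $\m_{xy}+\m_{yz}\bmod 2$. In both models the constraint $\partial(\cdot)=S$ depends only on the parity field, together with evenness at each midpoint. So I will (i) show that the parity fields have the same law, and (ii) show that, conditionally on the parity field, the local configurations on distinct edges are independent in both models, with explicit per-edge conditional laws. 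I will then (iii) build the coupling edge by edge.

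\emph{Step 1: equal laws of the parity fields.} For a single edge of weight $J$, summing $J^k/k!$ over even or odd $k$ gives $\cosh J$ or $\sinh J$. For $\Lambda[\beta]$, the weight of a parity field $p\in\{0,1\}^{E(\Lambda)}$ with $\partial p=S$ is therefore $\prod_e \cosh\beta\,\tanh(\beta)^{p_e}$.

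In $\tilde\Lambda$, the path $x\!-\!z\!-\!y$ with both edge parities equal to $q$ contributes $\cosh^2\xi$ if $q=0$ and $\sinh^2\xi$ if $q=1$. It thus behaves like a single edge of weight $\gamma:=\beta-\beta'$, up to a multiplicative constant, because $\tanh^2\xi=\tanh\gamma$. Combining it in parallel with the edge of weight $\beta'$ gives the following weights for the total parity:
\begin{itemize}
\item even: $\cosh\beta'\cosh\gamma+\sinh\beta'\sinh\gamma=\cosh\beta$;
\item odd: $\sinh\beta$.
\end{itemize}
Both are multiplied by the same constant $\cosh^2\xi/\cosh\gamma$. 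Hence the parity fields have the same law, both being proportional to $\prod_e\tanh(\beta)^{p_e}\mathbbm 1\{\partial p=S\}$.

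\emph{Step 2: conditional independence.} Given the parity field, the weights factorise over edges of $\Lambda$, so the local configurations (the value $\n_{xy}$, respectively the triple $(\m_{xy},\m_{xz},\m_{yz})$) are independent across edges. Each one is distributed according to its own weight restricted to the prescribed parity.

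\emph{Step 3: the one-edge coupling.} If the parity of $xy$ is odd, then $\n_{xy}$ is odd, so $\hat\n_{xy}=1$. Likewise either $\m_{xy}$ is odd, or $\m_{xz},\m_{yz}$ are both odd; in both cases $\hat\m_{xy}=1$. So $\hat\n_{xy}=\hat\m_{xy}\vee\omega_{xy}=1$ for any $\omega_{xy}$.

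If the parity is even, then $\mathbf P[\hat\n_{xy}=0\mid\text{even}]=1/\cosh\beta$. On the $\m$ side, $\hat\m_{xy}=0$ requires $\m_{xy}=0$ and the path contribution to be even with at least one of $\m_{xz},\m_{yz}$ equal to zero. The total weight of such configurations is $2\cosh\xi-1$. The total even weight is
\[
\cosh\beta'\cosh^2\xi+\sinh\beta'\sinh^2\xi=\cosh\beta\,\frac{\cosh^2\xi}{\cosh\gamma}=\cosh\beta\sqrt{\cosh 2\xi},
\]
using $\cosh\gamma=(1-\tanh^4\xi)^{-1/2}$. Hence
\[
q:=\mathbf P[\hat\m_{xy}=0\mid\text{even}]=\frac{2\cosh\xi-1}{\cosh\beta\,\sqrt{2\cosh^2\xi-1}}.
\]

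We need $q> 1/\cosh\beta$. This is equivalent to $(2c-1)^2>2c^2-1$ with $c=\cosh\xi$, i.e.\ to $2(c-1)^2>0$, which holds since $\xi>0$ when $\beta'<\beta$. Set
\[
\varepsilon:=1-\frac{1}{q\cosh\beta}\in(0,1),
\]
which depends only on $\beta$ and $\beta'$. Then $\mathbf P[\hat\m_{xy}\vee\omega_{xy}=0\mid\text{even}]=q(1-\varepsilon)=1/\cosh\beta$, where $\omega_{xy}\sim\mathrm{Ber}(\varepsilon)$ is independent.

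\emph{Conclusion and main obstacle.} Since the conditional laws match for every parity value, with the same $\varepsilon$, combining Steps 1 to 3 yields the claimed equality in law. The only delicate point is the explicit computation in Step 3 and checking that the resulting inequality is strict. The rest is bookkeeping of the factorisation, together with the observation that midpoints carry no sources.
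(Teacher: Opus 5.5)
Your proof is correct and is essentially the paper's argument. You reduce to parities via $\tanh(\xi)^2=\tanh(\beta-\beta')$, use that the edge-local configurations are conditionally independent given the parities, and compare per-edge closure probabilities; the only cosmetic difference is that the paper first splits each edge into a $\beta'$-copy and a $(\beta-\beta')$-copy and couples the $\beta'$-currents exactly, whereas you condition only on the total parity of each edge, and your $1-\varepsilon=\sqrt{2\cosh^2\xi-1}/(2\cosh\xi-1)$ indeed coincides with the paper's $(1-p_1)/(1-p_2)$.
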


The next proposition is similar to \cite[Proposition 4.1]{DGR20} and establishes a surface-order exponential bound for the probability that a box is connected to far away. The crucial differences are the following: (1) here we work on the modified graph $\widetilde{\Lambda_N}[\beta,\beta']$, (2) our statement involves box-to-boundary events instead of box-to-box, (3) we allow for arbitrary pairs of boundary sources. We note that (2) is precisely what allows us to harvest a monotonicity in boundary sources to obtain (3). The proof is presented in Section~\ref{sec:surface connection}.

\begin{Prop}
    \label{prop:uniform connectivity}
 	For every $\beta'>\beta_c$, there exists $c=c(\beta')>0$ such that, for every $\beta>\beta'$, every $N \geq 2n$, every $x \in \Lambda_{N-n}$, and every even $A,B\subset \partial \Lambda_{N}$, we have
 	$$\mathbf{P}^{A, B}_{\widetilde{\Lambda_N}[\beta,\beta'], 
    \widetilde{\Lambda_N}[\beta,\beta']}\Big[\lr{}{\widehat{\n_1 + \n_2}\:}{\Lambda_n(x)}{\partial \Lambda_{N-2n}}\Big] \geq 1 - e^{-cn^{d-1}}\,.$$
\end{Prop}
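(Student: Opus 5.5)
Our plan has three steps: reduce arbitrary boundary sources to the sourceless case, compare the sourceless double current with the FK-Ising model, and then transfer the FK surface-order bound of Remark~\ref{rem: connectivity fk}.

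\emph{Step 1 (removing the sources).} Write $G=\widetilde{\Lambda_N}[\beta,\beta']$ and let $E$ be the event $\lr{}{\widehat{\n_1 + \n_2}\:}{\Lambda_n(x)}{\partial \Lambda_{N-2n}}$. This is a box-to-boundary event. The key observation is that, for sources $A\subset\partial\Lambda_N$, the sources can only help the crossing. I will use the monotonicity argument of \cite{duminil2021conformal}, which I expect to be proved with the switching lemma. Condition on the cluster $\mathcal C$ of $\Lambda_n(x)$ in $\widehat{\n_1+\n_2}$. On $E^c$, $\mathcal C$ does not reach $\partial\Lambda_{N-2n}$, so it contains no source. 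Hence the restriction of $(\n_1,\n_2)$ to the edges touching $\mathcal C$ has no sources, and the conditional law given the outside configuration has a Radon--Nikodym factor of the form $Z^{A}_{G\setminus\mathcal C}Z^B_{G\setminus \mathcal C}/(Z^A_G Z^B_G)$ times sourceless weights inside. Griffiths' inequality (Proposition~\ref{prop:consequencegriffiths}) gives
\[
\frac{Z^A_{G\setminus\mathcal C}}{Z^\emptyset_{G\setminus\mathcal C}}=\langle\sigma_A\rangle_{G\setminus\mathcal C}\le\langle\sigma_A\rangle_G=\frac{Z^A_G}{Z^\emptyset_G},
\]
and the same holds for $B$. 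Summing over the possible explored clusters $\mathcal C$ should therefore give
\[
\mathbf P^{A,B}_{G,G}[E^c]\le \mathbf P^{\emptyset,\emptyset}_{G,G}[E^c].
\]
Writing this exploration cleanly is the first place where care is needed.

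\emph{Step 2 (sourceless case via FK and a box-merging trick).} I will follow \cite[Proposition 4.1]{DGR20}. Cover the annulus between $\Lambda_n(x)$ and $\partial\Lambda_{N-2n}$ by boxes of size $n$ chained to the boundary. By the switching lemma, the probability that $\Lambda_n(x)$ and $\Lambda_n(y)$ are disconnected in the sourceless double current can be expressed as follows. Merge each box into a single vertex, which increases couplings and so is controlled by Griffiths' inequalities. The disconnection probability is then bounded by the relation
\[
\mathbf{P}^{\emptyset,\emptyset}\bigl[\mathfrak a\not\leftrightarrow\mathfrak b\bigr]=1-\langle\sigma_{\mathfrak a}\sigma_{\mathfrak b}\rangle^2
\]
on the merged graph. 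The two-point function on the merged graph is at least the probability that the boxes are connected in FK, using Corollary~\ref{cor: ES correlations} and monotonicity. The FK graph contains $\Lambda[\beta']$, since the $xy$ edges carry weight $\beta'$ and the other edges only increase couplings. Remark~\ref{rem: connectivity fk} at $\beta'>\beta_c$ then gives a bound of $e^{-cn^{d-1}}$, with $c$ depending only on $\beta'$, uniformly in $\beta>\beta'$.

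\emph{Step 3 (from box-to-box to box-to-boundary).} It remains to pass from the box-to-box estimate to the box-to-boundary event $E$. Pick a box $\Lambda_n(y)$ adjacent to $\partial\Lambda_{N-2n}$; this requires $N\ge 2n$ and $x\in\Lambda_{N-n}$. If $\Lambda_n(x)$ is connected to $\Lambda_n(y)$ by a path, does the path reach $\partial\Lambda_{N-2n}$? Not necessarily, so I will instead choose $y$ so that $\Lambda_n(y)$ straddles $\partial\Lambda_{N-2n}$. Then any path to $\Lambda_n(y)$ starting inside must cross the boundary, or else $\Lambda_n(x)$ itself intersects it, in which case $E$ is trivial.

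\emph{Main obstacle.} The hardest step is the merging in Step 2. The merged graph's double current must be compared with the original one, and DGR's trick must be adapted to the midpoint graph, where a connection through $z$ requires both half-edges. I would first handle the midpoint graph by noting that $\widehat{\cdot}$ treats a path through $z$ as an open edge of $\Lambda$.
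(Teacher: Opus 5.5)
Your Step 1 is correct. It is the paper's source-monotonicity argument ($\langle\sigma_A\rangle_{G\setminus C}\le\langle\sigma_A\rangle_G$ for the explored cluster $C$), applied to both currents instead of only the second one. That is legitimate, and it even removes the need for the sources to avoid the target box.

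The genuine gap is in Step 2, exactly where you locate the main obstacle. Griffiths' inequalities show that merging increases spin correlations. They therefore give $\mathbf P^{\emptyset,\emptyset}_{\Lambda_N^{\bullet\bullet},\Lambda_N^{\bullet\bullet}}[\mathbf x\not\leftrightarrow\mathbf y]=1-\langle\sigma_{\mathbf x}\sigma_{\mathbf y}\rangle^2_{\Lambda_N^{\bullet\bullet}}\le 2e^{-cn^{d-1}}$ on the merged graph. They say nothing, however, about the double-current percolation on the original graph, because double currents are not monotone in the couplings; this is the very difficulty the paper is built around. So your implicit inequality "disconnection in the original graph $\le$ disconnection in the merged graph" is unjustified, and nothing in your proposal replaces it.

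What is needed is a change of measure, and here is how it goes. Merge only $\Lambda_{n/2}(x)$ and $\Lambda_{n/2}(y)$. Note that $\mathcal E=\{\Lambda_n(x)\not\leftrightarrow\Lambda_n(y)\}$ depends only on edges outside $\Lambda_n(x)\cup\Lambda_n(y)$, and that $\mathcal E\subset\{\mathbf x\not\leftrightarrow\mathbf y\}$ on the merged graph. Decomposing on the currents outside the boxes, the Radon--Nikodym derivative becomes a ratio of boundary correlations, and Griffiths' inequality yields
\[
\mathbf P^{\emptyset,\emptyset}_{\widetilde{\Lambda_N}[\beta,\beta'],\widetilde{\Lambda_N}[\beta,\beta']}[\mathcal E]\le\Big(\max_{D\subset\partial\Lambda_n,\ |D|\text{ even}}\frac{\langle\sigma_D\rangle_{\Lambda_n^\bullet}}{\langle\sigma_D\rangle_{\Lambda_n[\beta]}}\Big)^4\,\mathbf P^{\emptyset,\emptyset}_{\Lambda_N^{\bullet\bullet},\Lambda_N^{\bullet\bullet}}[\mathcal E].
\]
A Hamiltonian comparison bounds this prefactor only by $e^{Cn^{d-1}}$, with a constant $C$ that has no reason to be smaller than the FK rate $c(\beta')$. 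The missing key input is Lemma~\ref{lem:weak weak mix} (from \cite{DGR20}), which improves the prefactor to $e^{\varepsilon n^{d-1}}$ for every $\varepsilon>0$, using uniqueness of the infinite-volume FK measure. That lemma needs the buffer annulus $\Lambda_n\setminus\Lambda_{n/2}$: merging the whole box on which the event lives would make the ratio genuinely exponential in $n^{d-1}$.

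By contrast, the midpoint structure you worry about is harmless. Summing out $\tau_z$ shows that the Ising marginal of $\widetilde{\Lambda}[\beta,\beta']$ on $\Lambda$ is exactly $\Lambda[\beta]$; this is what $\tanh(\xi)^2=\tanh(\beta-\beta')$ encodes. So Lemma~\ref{lem:weak weak mix} applies verbatim.

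Two smaller fixes:
\begin{itemize}
\item In Step 3, the target box must lie outside $\Lambda_{N-2n-1}$ (e.g.\ centred at $(N-n)e_1$), not straddle $\partial\Lambda_{N-2n}$. A path can reach the inner part of a straddling box without ever touching $\partial\Lambda_{N-2n}$.
\item Drop the chain of boxes. Two clusters meeting a common box need not be connected, since that would be local uniqueness. A single box-to-box estimate suffices, because Remark~\ref{rem: connectivity fk} is uniform in the positions.
\end{itemize}
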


The final ingredient of our proof is Proposition~\ref{prop:uniqueness with sprinkling} below. In words, it says that if a configuration has $\ell_L$-dense boundary-clusters in $\Lambda_{8L}$ for $\ell_L=o(\log L)$ (in the sense that they touch every box of size $\ell_L$), then local uniqueness holds with high probability if we independently add edges with arbitrarily small probability. Such a result was first obtained by Benjamini and Tassion \cite{benjaminitassion} for $\ell_L = 0$, and later used in the present form by Severo \cite{SeveroSlabIsing2023} to prove local uniqueness for the FK-Ising model. In fact, Proposition~\ref{prop:uniqueness with sprinkling} is a small modification of \cite[Proposition 2.2]{SeveroSlabIsing2023}. We explain in Section~\ref{sec:proof uniqueness with sprinkling} how to deduce the former for the latter. 

Given $0\leq\ell\leq L$, consider the set of bond percolation configurations
\begin{equation*}
\mathcal{A}_L=\mathcal{A}_L(\ell)=\{\omega \subset E(\Lambda_{8L}): \forall\, x\in \ell\mathbb{Z}^d \cap \Lambda_{8L}, \Lambda_{\ell}(x) \text{ is connected to } \partial\Lambda_{8L} \text{ in } \omega\}. 
\end{equation*}
Below, $(\mathbb{P}_{\varepsilon},\gamma)$ denotes a Bernoulli bond percolation on $\mathbb{Z}^d$ of parameter $\varepsilon$. 

\begin{Prop}\label{prop:uniqueness with sprinkling}
Let $d\geq 2$ and let $(\ell_L)_{L\geq 1}$ be a sequence such that $\ell_L=o(\log L)$ as $L\to\infty$. Then, for every $\varepsilon>0$, one has
\begin{equation*}
\lim_{L\to \infty} \inf_{\omega \in \mathcal{A}_L(\ell_L)}\mathbb{P}_{\varepsilon}[\omega\cup \gamma \in U(L)]=1.    
\end{equation*}
\end{Prop}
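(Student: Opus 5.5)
The plan is to deduce Proposition~\ref{prop:uniqueness with sprinkling} from \cite[Proposition 2.2]{SeveroSlabIsing2023}, which I take to state: for every $\varepsilon>0$, uniformly over configurations $\omega\subset E(\Lambda_{8L})$ whose clusters touching $\partial\Lambda_{8L}$ intersect every box $\Lambda_{\ell_L}(x)$, $x\in\ell_L\mathbb Z^d\cap\Lambda_{8L}$, the sprinkled configuration $\omega\cup\gamma$ lies in $U(L)$ with probability tending to one, provided $\ell_L=o(\log L)$. If that version already matches $\mathcal{A}_L(\ell_L)$ up to harmless changes of scale (e.g.\ $\Lambda_{\ell}$ versus $\Lambda_{2\ell}$, or the precise annuli in $U(L)$), I will only check that the definitions agree after renaming constants. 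Otherwise I will rerun the Benjamini--Tassion argument \cite{benjaminitassion} directly, as follows.

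\emph{Existence of a crossing cluster.} Since $\omega\in\mathcal{A}_L$, the box $\Lambda_{\ell}(0)$ is connected to $\partial\Lambda_{8L}$ in $\omega$, and this path crosses $\Lambda_{8L}\setminus\Lambda_L$. So the first part of $U(L)$ holds deterministically.

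\emph{Merging clusters.} Let $\mathcal C_1,\mathcal C_2$ be two clusters of $\omega\cup\gamma$ crossing $\Lambda_{4L}\setminus\Lambda_{2L}$. The key deterministic input is a separating-set (Menger-type) estimate. Consider the clusters of $\omega$ restricted to the annulus $\Lambda_{8L}\setminus\Lambda_L$. I will show that any partition of these crossing clusters into two classes that are not linked by $\omega$ can be linked by many disjoint short connectors. Each connector has length $O(\ell)$ and lies in the annulus: it starts at a box $\Lambda_\ell(x)$ touched by one class, ends at an adjacent box touched by the other, and meets neither class elsewhere. Because every $\ell$-box is touched by a boundary cluster, a crossing cluster from the first class is flanked by boxes touched by the second class along a surface of size $\gtrsim (L/\ell)^{d-1}$. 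This yields at least $c(L/\ell)^{d-1}$ disjoint candidate connectors, each made fully open by $\gamma$ with probability at least $\varepsilon^{C\ell}$, independently. Hence the probability that no connector opens is at most
\[
\exp\bigl(-c(L/\ell)^{d-1}\varepsilon^{C\ell}\bigr).
\]

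\emph{Union bound and choice of scale.} A union bound over the ways of partitioning must be controlled. Following Benjamini--Tassion, I will merge iteratively rather than sum over all partitions. At each stage I consider the current clusters, of which there are at most $(L/\ell)^d$ (each contains a box's touch point), and show that with high probability every cluster merges with some other one. This makes the number of clusters drop geometrically, so $O(\log L)$ rounds suffice. To keep the rounds independent, each round uses an independent thinning $\gamma^{(i)}$ of intensity $\varepsilon/\log L$. The per-round failure probability is then at most
\[
(L/\ell)^d\exp\bigl(-c(L/\ell)^{d-1}(\varepsilon/\log L)^{C\ell}\bigr).
\]
Since $\ell=o(\log L)$, the quantity $(\log L/\varepsilon)^{C\ell}=L^{o(1)}$, so the exponent is $L^{d-1-o(1)}$. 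The total failure probability therefore tends to zero, uniformly in $\omega$.

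\emph{Main obstacle.} The hard step is the deterministic geometric lemma producing many disjoint connectors, all confined to $\Lambda_{8L}\setminus\Lambda_L$, between two classes of clusters. The clusters can be tangled, and one must ensure the connectors avoid the inner box. I would handle this by working at the coarse scale $\ell$ and using the connectivity of $\ast$-boundaries in $\mathbb Z^d$ (Timár's lemma) to find the separating surface. This is precisely the content imported from \cite[Proposition 2.2]{SeveroSlabIsing2023}, so in practice I would cite that step and only adapt the boundary and annulus conventions.
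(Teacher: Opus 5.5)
Your existence step is fine, but the merging step has a genuine gap, located exactly where you planned to ``only check that the definitions agree''. Proposition~2.2 of \cite{SeveroSlabIsing2023} does not conclude $\omega\cup\gamma\in U(L)$. It gives the \emph{uniqueness with sprinkling} event: all clusters of $\omega$ crossing $\Lambda_{4L}\setminus\Lambda_{2L}$ belong to one cluster of $\omega\cup\gamma$. Your fallback argument proves the same type of statement, since after naming two crossing clusters $\mathcal C_1,\mathcal C_2$ of $\omega\cup\gamma$ you only ever work with partitions of crossing clusters of $\omega$.

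Neither route controls a cluster of $\omega\cup\gamma$ that crosses $\Lambda_{4L}\setminus\Lambda_{2L}$ but contains no crossing cluster of $\omega$. The condition $\omega\in\mathcal A_L(\ell_L)$ does not exclude this. Take two $\omega$-segments, one starting on $\partial\Lambda_{2L}$ and one ending on $\partial\Lambda_{4L}$, separated by a single closed edge and avoiding all clusters that reach $\partial\Lambda_{8L}$. With probability at least $\varepsilon$ the sprinkling joins them into a crossing cluster of $\omega\cup\gamma$, about which the uniqueness-with-sprinkling event says nothing. Since $U(L)$ depends only on $\omega\cup\gamma$, this is not a harmless change of conventions.

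The paper bridges this with a second, independent sprinkling. Write $\gamma=\gamma^1\cup\gamma^2$ with $(1-\delta)^2=1-\varepsilon$, and use $\gamma^1$ and Severo's result to merge the crossing $\omega$-clusters into a single cluster $C_*$. Then, for each $x\in\Lambda_{2L}$, explore the cluster of $x$ in $\omega\cup\gamma$, revealing $\gamma^2$ one edge at a time. If this cluster crosses, it enters each of the $\lfloor L/\ell_L\rfloor$ sub-annuli $\Lambda_{2L+2i\ell_L}\setminus\Lambda_{2L+2(i-1)\ell_L}$. There, since $\omega\in\mathcal A_L$, it comes within distance $2\ell_L$ of $C_*$ and hooks onto it with conditional probability at least $\delta^{2\ell_L}$ using unrevealed edges. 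The failure probability $(1-\delta^{2\ell_L})^{\lfloor L/\ell_L\rfloor}\le e^{-L^{1-o(1)}}$ survives a union bound over $x$.

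Separately, the quantitative part of your fallback does not hold under the stated hypothesis. With $O(\log L)$ rounds of intensity $\varepsilon/\log L$, a connector of length $C\ell$ opens with probability $\exp(-C\ell\log\log L-C\ell\log(1/\varepsilon))$. The hypothesis $\ell=o(\log L)$ makes the second term $o(\log L)$, but the first is $o(\log L)$ only when $\ell=o(\log L/\log\log L)$. For instance, for $\ell_L=\log L/\sqrt{\log\log L}$ your per-round bound is useless. Your version would still cover the application in the proof of Theorem~\ref{thm:unique_currents}, where $\ell_L=C_0(\log L)^{1/(d-1)}$ with $d\geq 3$, but not the proposition as stated.

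Also, your claim of $c(L/\ell)^{d-1}$ disjoint connectors is unjustified: a class consisting of a thin crossing tube has an interface of only order $L/\ell$ boxes. That weaker count would still suffice for your union bound. However, since you defer the geometric lemma to Severo's paper anyway, your fallback collapses onto your first route and inherits the gap above.
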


We are now ready to prove Theorem~\ref{thm:unique_currents}.

\begin{proof}[Proof of Theorem~\textup{\ref{thm:unique_currents}}]
Let $\beta>\beta'>\beta_c$. Let $c>0$ be the constant of Proposition~\ref{prop:uniform connectivity} and for $L>0$, let $C_0=(2d/c)^{\frac{1}{d-1}}$  and \begin{equation*}
\ell=\ell_L:=C_0 (\log L)^{\frac{1}{d-1}}.
\end{equation*}
Take $L$ large enough so that $\ell_L\leq L$. By Proposition~\ref{prop:uniform connectivity} and a union bound,    
\begin{equation*}
\inf_{A,B \subset \partial \Lambda_{10L}}\mathbf{P}^{A, B}_{\widetilde{\Lambda_{10L}}[\beta,\beta'],\widetilde{\Lambda_{10L}}[\beta,\beta']}[\mathcal{A}_L]\geq 1-L^{-d}.    
\end{equation*}
Hence, by Proposition~\ref{prop:coupling},
\begin{equation*}
\inf_{A,B \subset \partial \Lambda_{10L}}\mathbf{P}^{A, B}_{\Lambda_{10L}[\beta],\Lambda_{10L}[\beta]}[U(L)]\geq (1-L^{-d})\inf_{\omega\in \mathcal{A}_L}\mathbb{P}_{\varepsilon}[\omega\cup \gamma \in U(L)],   
\end{equation*}
which tends to $1$ as $L$ tends to infinity by Proposition~\ref{prop:uniqueness with sprinkling}. This concludes the proof.
\end{proof}

\section{Proof of the sprinkling property}\label{sec:coupling}
Before starting the proof, we introduce one notation. Let $\Lambda\subset \mathbb Z^d$ and $0<\beta'<\beta$. Define the weighted graph $\Lambda[\beta,\beta']=(\Lambda,E,J)$ as follows: $E$ is made of two copies of $E(\Lambda)$ which, given $xy\in E(\Lambda)$, we call $xy(\beta')$ and $xy(\beta-\beta')$ respectively; for each $xy\in E(\Lambda)$, we let $J_{xy(\beta')}=\beta'$ and $J_{xy(\beta-\beta')}=\beta-\beta'$. Below, we use the observation that the law of $\hat{\n}_1$ where $\n_1$ is sampled according to $\mathbf{P}^S_{\Lambda[\beta]}$ is the same as that of $\hat{\n}_2$, where $\n_2$ is the projection onto $E(\Lambda)$ of a sample of $\mathbf{P}^S_{\Lambda[\beta,\beta']}$.
\begin{proof}[Proof of Proposition~\textup{\ref{prop:coupling}}]
Let $\beta,\beta', \Lambda, S$ be as in the statement and recall that $\xi>0$ is chosen so that $\tanh(\xi)^2=\tanh(\beta-\beta')$. We begin by defining two maps $f$ and $g$ on $\tilde{\Lambda}[\beta,\beta']$ and $\Lambda[\beta,\beta']$ respectively.
\begin{enumerate}
    \item[$\bullet$] Fix $\m\in \Omega_{\tilde{\Lambda}[\beta,\beta']}$. Let $\tilde{\m}=f(\m)$ be the current on $\tilde{\Lambda}[\beta,\beta']$ defined as follows: for every $xy\in E(\Lambda)$, $\tilde{\m}(xy)=\m(xy)$; and (recall that $z=z(xy)$) $\tilde{\m}(xz)=\tilde{\m}(yz)=0$ if $\m(xz)$ and $\m(yz)$ are even, and $\tilde{\m}(xz)=\tilde{\m}(yz)=1$ if $\m(xz)$ and $\m(yz)$ are odd (note that $\m(xz)$ and $\m(yz)$ must have the same parity due to sources constraints).
    \item[$\bullet$] Fix $\n\in \Omega_{\Lambda[\beta,\beta']}$. Let $\tilde{\n}=g(\n)$ be the current on $\Lambda[\beta,\beta']$ defined as follows: for every $xy\in E(\Lambda)$, $\tilde{\n}(xy(\beta'))=\n(xy(\beta'))$, and $\tilde{\n}(xy(\beta-\beta'))=\mathds{1}\{\n(xy(\beta-\beta'))\textup{ is odd}\}$. 
\end{enumerate}
Consider the normalised weights
$$
w(\tilde{\n})=\frac{\sum_{\n\in g^{-1}(\tilde{\n})}w(\n)}{\cosh(\beta-\beta')^{|E(\Lambda)|}},
$$
and 
$$
w(\tilde{\m})=\frac{\sum_{\m\in f^{-1}(\tilde{\m})}w(\m)}{\cosh(\xi)^{2|E(\Lambda)|}}
$$
of $\tilde{\n}$ and $\tilde{\m}$, and note that we can rewrite them in the form
$$
w(\tilde{\n})=\prod_{xy\in E(\Lambda)} \frac{(\beta')^{\tilde{\n}(xy(\beta'))}}{\tilde{\n}(xy(\beta'))!} \tanh(\beta-\beta')^{\tilde{\n}(xy(\beta-\beta'))},
$$
and 
$$
w(\tilde{\m})=\prod_{xy\in E(\Lambda)} \frac{(\beta')^{\tilde{\m}(xy(\beta'))}}{\tilde{\m}(xy(\beta'))!} \tanh(\xi)^{\tilde{\m}(xz(\xi))} \tanh(\xi)^{\tilde{\m}(yz(\xi))}.
$$
By the definition of $f$ above and due to the source constraints (the midpoints $z=z(xy)$ of $\tilde{\Lambda}[\beta,\beta']$ are not sources), we have $\tilde{\m}(xz)=\tilde{\m}(yz)$. Hence, our choice $\tanh(\xi)^2=\tanh(\beta-\beta')$ implies that if 
\begin{equation}\label{eq:currents equality}
\tilde{\n}(xy(\beta'))=\tilde{\m}(xy(\beta')) \quad \text{and} \quad \tilde{\n}(xy(\beta-\beta'))=\tilde{\m}(xz)(=\tilde{\m}(yz)) \quad \forall\, xy\in E(\Lambda),    
\end{equation}
then $w(\tilde{\n})=w(\tilde{\m})$. This in turn implies that we can find a coupling $(\tilde{\n},\tilde{\m},\mathbb{Q})$ between the measures $f(\mathbf{P}^{S}_{\Lambda[\beta,\beta']})$ and $g(\mathbf{P}^{S}_{\overline{\Lambda}[\beta,\beta']})$ such that \eqref{eq:currents equality} holds $\mathbb{Q}$-almost surely. 

We now obtain a coupling between the associated percolation configurations $\hat{\n}$ and $\hat{\m}$. To this end, note that conditionally on $\tilde{\n}$, the collection of random variables $(\mathbbm{1}\{\n(xy(\beta-\beta'))>0\})_{xy\in E(\Lambda)}$ is a sequence of independent Bernoulli random variables that take the value $1$ if $\tilde{\n}(xy(\beta-\beta'))=1$ and are otherwise equal to $1$ with a certain probability $p_1$, which does not depend on the edge $xy$. Similarly, conditionally on $\tilde{\m}$, the collection of random variables $(\mathbbm{1}\{\m(xz(xy))\m(yz(xy))>0\})_{xy\in E(\Lambda)}$ is a sequence of independent Bernoulli random variables that take the value $1$ if $\tilde{\m}(xz(xy))=1$ and otherwise take the value $1$ with a certain probability $p_2$. It suffices to show that 
\begin{equation}\label{eq: p_1>p_2}
p_1>p_2.
\end{equation}
Indeed, assume \eqref{eq: p_1>p_2}. Using the coupling between $\tilde{\n}$ and $\tilde{\m}$ that satisfies \eqref{eq:currents equality}, we can couple the random variables $(\mathbbm{1}\{\n(xy(\beta-\beta'))>0\})_{xy\in E(\Lambda)}$ and $(\mathbbm{1}\{\m(xz(\xi))\m(yz(\xi))>0\})_{xy\in E(\Lambda)}$ so that the former is equal to the union of the latter with an independent Bernoulli percolation on $E(\Lambda)$ of parameter $\varepsilon=\frac{p_1-p_2}{1-p_2}$. The existence of the desired coupling between $\hat{\n}$ and $\hat{\m}$ then follows.

Let us now prove \eqref{eq: p_1>p_2}. Using the identity $\rm{sech}^2(\xi):=\frac{1}{\cosh(\xi)^2}=1-\tanh(\xi)^2$, we get
\[
p_2=\left(1-\frac{1}{\cosh(\xi)}\right)^2=1-2\sqrt{1-\tanh(\xi)^2}+1-\tanh(\xi)^2.
\]
Let $t:=\beta-\beta'$. One has
\[
p_1=1-\frac{1}{\cosh(t)}.
\]
Since $\tanh(\xi)^2=\tanh(t)$ by our choice of $\xi$, $1-\tanh(t)=e^{-t}/\cosh(t)$, and $4e^{-t}\cosh(t)=2+2e^{-2t}$,
\[
p_1-p_2=-\frac{1}{\cosh(t)}+\sqrt{\frac{4e^{-t}}{\cosh(t)}}-\frac{e^{-t}}{\cosh(t)}=\frac{1}{\cosh(t)}\left(\sqrt{2+2e^{-2t}}-1-e^{-t}\right).
\]
Now, for $t>0$,
\[
2+2e^{-2t}-(1+e^{-t})^2=1-2e^{-t}+e^{-2t}>0,
\]
which implies that $p_1>p_2$. This completes the proof.
\end{proof}

\section{Proof of the surface-order connectivity}\label{sec:surface connection}

We start by proving the Proposition~\ref{prop:uniform connectivity} for $B=\emptyset$. 
We emphasise that for the argument to work, the first current can have essentially any set of sources, but the second current needs to have no sources. We will later see how to deal with the sources of the second current.

\begin{Prop}
 	\label{prop:connectivity_old}
 	Let $\beta>\beta'>\beta_c$. There exists $c>0$ such that, for every $N \geq 2n$, every $x, y \in \Lambda_{N}$ and every $S\subset \Lambda_{N}$ of even cardinality with $S\cap (\Lambda_n(x) \cup \Lambda_n(y))=\emptyset$, we have
 	$$\mathbf{P}^{S, \emptyset}_{\widetilde{\Lambda_N}[\beta,\beta'], \widetilde{\Lambda_N}[\beta,\beta']}\Big[\lr{}{\widehat{\n_1 + \n_2}\:}{\Lambda_n(x)}{\Lambda_n(y)}\Big] \geq 1 - e^{-cn^{d-1}}\,.$$
 \end{Prop}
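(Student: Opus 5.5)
The plan is to follow the box-merging strategy of \cite[Proposition 4.1]{DGR20}. We use the switching lemma to turn the disconnection event into a statement about Ising correlations on an auxiliary graph. We then use the Griffiths inequalities to compare with the Ising model at $\beta'$ on $\Lambda_N$, and finally invoke the surface-order FK estimate of Remark~\ref{rem: connectivity fk} at $\beta'>\beta_c$. The reason for working on $\widetilde{\Lambda_N}[\beta,\beta']$ is exactly this comparison step. Its Ising model contains $\Lambda_N[\beta']$ as a weighted subgraph: the $xy$ edges carry weight $\beta'$, and the midpoint paths only add positive couplings. Hence, by Proposition~\ref{prop:consequencegriffiths}, every correlation $\langle\sigma_A\rangle_{\widetilde{\Lambda_N}[\beta,\beta']}$ dominates the corresponding one on $\Lambda_N[\beta']$.

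\textbf{Step 1 (merging).} Write $D$ for the event that $\Lambda_n(x)$ is not connected to $\Lambda_n(y)$ in $\widehat{\n_1+\n_2}$. For $K>0$, let $G_K$ be $G=\widetilde{\Lambda_N}[\beta,\beta']$ with extra edges of weight $K$ added between neighbouring vertices inside $\Lambda_n(x)$ and inside $\Lambda_n(y)$. As $K\to\infty$, the Ising model on $G_K$ converges to the Ising model on $G$ conditioned on $\sigma$ being constant on each of the two boxes. Equivalently, it is the Ising model on the graph $G^{\mathrm{m}}$ in which each box is collapsed to a single vertex, $a$ and $b$ respectively. Since $S$ avoids both boxes, sources are unaffected by the collapse.

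The key identity, obtained via Lemma~\ref{lem:switching} (as in \cite{DGR20}), is that on $D$ every cluster touches at most one box. Consequently, the configurations of $(\n_1,\n_2)$ on $D$ can be re-read as currents on $G^{\mathrm{m}}$ with the same sources in which $a$ and $b$ lie in different clusters. Switching the pair of sources $\{a,b\}$ between the two currents then yields an inequality of the form
\begin{equation*}
\mathbf{P}^{S,\emptyset}_{G,G}[D]\;\le\; C\,\frac{Z^{S}_{G^{\mathrm{m}}}Z^{\emptyset}_{G^{\mathrm{m}}}}{Z^{S}_{G}Z^{\emptyset}_{G}}\Big(1-\langle\sigma_a\sigma_b\rangle_{G^{\mathrm{m}}}\frac{\langle \sigma_S\sigma_a\sigma_b\rangle_{G^{\mathrm{m}}}}{\langle\sigma_S\rangle_{G^{\mathrm{m}}}}\Big).
\end{equation*}
Here the prefactor records the cost of the merging. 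I expect this step to be the main obstacle, specifically controlling the prefactor. A naive count of how many clusters meet each box loses a factor $2^{|\Lambda_n|}$ of volume order, which would swamp a surface-order bound. The way around it, following \cite{DGR20}, is that on $D$ the collapse only identifies vertices that the switching already treats identically. The ratio of partition functions can then be absorbed into the ratio $\mathbf{P}^{S,\emptyset}_{G_K}[D]/\mathbf{P}^{S,\emptyset}_{G}[D]$, which stays bounded. I would first try to set this up directly with the finite-$K$ graphs $G_K$ and pass to the limit at the end.

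\textbf{Step 2 (correlations).} By Proposition~\ref{prop: Griffiths} (Griffiths II), $\langle\sigma_S\sigma_a\sigma_b\rangle\ge\langle\sigma_S\rangle\langle\sigma_a\sigma_b\rangle$. The bracket above is therefore at most $1-\langle\sigma_a\sigma_b\rangle_{G^{\mathrm{m}}}^2\le 2(1-\langle\sigma_a\sigma_b\rangle_{G^{\mathrm{m}}})$. Next, by Corollary~\ref{cor: ES correlations} applied on $G^{\mathrm{m}}$,
\begin{equation*}
1-\langle\sigma_a\sigma_b\rangle_{G^{\mathrm{m}}}=\phi_{G^{\mathrm{m}}}[a\not\leftrightarrow b]=\phi_{G}[\Lambda_n(x)\not\leftrightarrow\Lambda_n(y)\mid \text{boxes wired}].
\end{equation*}
By monotonicity of FK in boundary conditions and weights, this is at most $\phi_{\Lambda_N[\beta']}[\Lambda_n(x)\not\leftrightarrow\Lambda_n(y)]$, using that the FK model on $G$ dominates the one on $\Lambda_N[\beta']$ after projecting onto $E(\Lambda_N)$.

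\textbf{Step 3 (conclusion).} Remark~\ref{rem: connectivity fk} at $\beta'>\beta_c$ bounds the last quantity by $e^{-cn^{d-1}}$, uniformly in $N\ge 2n$ and in $x,y$. Combined with the bounded prefactor from Step~1, this gives the claim after adjusting $c$. The condition $S\cap(\Lambda_n(x)\cup\Lambda_n(y))=\emptyset$ is used exactly so that merging the boxes does not interact with the sources. The sourcelessness of $\n_2$ is what makes the switching in Step~1 produce the product $\langle\sigma_a\sigma_b\rangle\langle\sigma_S\sigma_a\sigma_b\rangle/\langle\sigma_S\rangle$, to which Griffiths' inequality applies.
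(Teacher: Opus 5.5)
Your Steps 2 and 3 are fine and coincide with the paper's: switching on the collapsed graph, Griffiths to reduce to $\langle\sigma_a\sigma_b\rangle^2$, then Edwards--Sokal and FKG to compare with $\phi_{\Lambda_N[\beta']}$, and finally Remark~\ref{rem: connectivity fk}. The gap is in Step~1, at the point you flag yourself. The argument needs
$\mathbf{P}^{S,\emptyset}_{G,G}[D]\le e^{c'n^{d-1}}\,\mathbf{P}^{S,\emptyset}_{G^{\mathrm m},G^{\mathrm m}}[a\not\leftrightarrow b]$
with $c'$ strictly smaller than half the FK surface-order constant $c$. Your claim that the prefactor ``stays bounded'' because the collapse only identifies vertices the switching treats identically is not an argument. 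With your choice of collapsing the \emph{whole} boxes, the natural comparison goes as follows. Decompose both measures according to the currents on the edges outside $\Lambda_n(x)\cup\Lambda_n(y)$, on which $D$ is measurable. The Radon--Nikodym factor is then only controlled by $\max_A \langle\sigma_A\rangle_{\text{collapsed box}}/\langle\sigma_A\rangle_{\Lambda_n}$ over even $A\subset\partial\Lambda_n$. The numerator equals $1$. The denominator $\langle\sigma_A\rangle_{\Lambda_n}=\phi[\mathcal F_A]$ is of order $e^{-Cn^{d-1}}$ when $A$ is a positive fraction of $\partial\Lambda_n$, since an isolated vertex of $A$ already violates $\mathcal F_A$ and finite energy makes such vertices appear at positive density. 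The constant $C$ bears no relation to $c$, and nothing in your proposal improves on this. So the cost of merging is surface order with an uncontrolled constant, which swamps the gain from Step~3.

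The missing idea is to separate two scales.
\begin{itemize}
\item Collapse only the inner boxes $\Lambda_{n/2}(x)$ and $\Lambda_{n/2}(y)$, giving $\Lambda_N^{\bullet\bullet}$. Keep $D$ as the disconnection of the larger boxes $\Lambda_n(x)$ and $\Lambda_n(y)$; it still implies $\mathbf x\not\leftrightarrow\mathbf y$.
\item Decompose on the currents outside $\Lambda_n(x)\cup\Lambda_n(y)$ and use Griffiths' inequality. This bounds the cost by $\bigl(\max_{A}\langle\sigma_A\rangle_{\Lambda_n^\bullet}/\langle\sigma_A\rangle_{\Lambda_n}\bigr)^4$, one factor for each box and each current.
\item Invoke Lemma~\ref{lem:weak weak mix} from \cite{DGR20}. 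It states that this ratio is at most $e^{\varepsilon n^{d-1}}$ for \emph{any} $\varepsilon>0$ once $n$ is large. A plain Hamiltonian comparison would only give $e^{Cn^{d-1}}$ with a large $C$; the smallness of $\varepsilon$ rests on the uniqueness of the infinite-volume FK measure.
\item Choose $4\varepsilon<c/2$, with $c$ the constant from Remark~\ref{rem: connectivity fk} applied to $\Lambda_{n/2}(x)$ and $\Lambda_{n/2}(y)$. This gives \eqref{eq:kk} and closes the proof.
\end{itemize}
Without this, or an equivalent quantitative comparison with arbitrarily small surface-order cost, your proposal does not establish the statement.
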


In order to prove this result, we first need to introduce some additional notation.
Consider the weighted graph 
$\Lambda_N^{\bullet\bullet}:=\Lambda_N^{\bullet\bullet}(x,y,n,\beta,\beta')$ obtained from $\widetilde{\Lambda_N}[\beta,\beta']$ by replacing $\Lambda_{n/2}(x)$ and $\Lambda_{n/2}(y)$ by two vertices ${\bf x}$ and ${\bf y}$, which are connected to each vertex outside $\Lambda_{n/2}(x)\cup
 \Lambda_{n/2}(y)$ by as many edges as the number of edges between this vertex and $\Lambda_{n/2}(x)$ or $\Lambda_{n/2}(y)$. Similarly, we define $\Lambda_{n}^\bullet(x)$ to be the weighted graph obtained from $\Lambda_{n}(x)[\beta]$ by merging all the vertices in $\Lambda_{{n/2}}(x)$ into a single vertex.
For the proof, we also need the following lemma from \cite{DGR20}. 

\begin{Lem}[\hspace{1pt}{\cite[Lemma 4.2]{DGR20}}]\label{lem:weak weak mix}
Let $\beta>\beta_c$ and $\varepsilon>0$. For every $n$ large enough and every $A\subset\partial\Lambda_n$ of even cardinality,
\begin{equation}\label{eq:wwmixing}\langle{\sigma_A}\rangle_{\Lambda_n^\bullet} \leq e^{\varepsilon n^{d-1}}\langle {\sigma_A}\rangle_{\Lambda_n[\beta]}.\end{equation}
\end{Lem}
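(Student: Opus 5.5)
The plan is to pass to the FK-Ising representation and compare the free measure with the measure in which $\Lambda_{n/2}$ is wired, paying only a surface-order price. By Corollary~\ref{cor: ES correlations}, $\langle\sigma_A\rangle_{\Lambda_n^\bullet}=\phi_{\Lambda_n^\bullet}[\mathcal F_A]$ and $\langle\sigma_A\rangle_{\Lambda_n[\beta]}=\phi_{\Lambda_n[\beta]}[\mathcal F_A]$. Since $A\subset\partial\Lambda_n$, the event $\mathcal F_A$ only records which $A$-clusters are joined. Wiring $\Lambda_{n/2}$ therefore matters only through the merging of those clusters of $\omega$ that reach $\Lambda_{n/2}$. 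Every such cluster containing a point of $A$ must cross the annulus $\Lambda_{n}\setminus\Lambda_{n/2}$.

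The key step is to construct, in the free measure, an event $W$ on which the clusters of $\omega$ that cross the annulus $\mathrm{Ann}:=\Lambda_{3n/4}\setminus\Lambda_{5n/8}$ (say) are all connected to one another inside $\mathrm{Ann}$. On $W$ the connection pattern of $A$ outside $\Lambda_{5n/8}$ under the free measure already contains every identification that the wiring of $\Lambda_{n/2}$ could produce. To obtain $W$, I fix a mesoscopic scale $k$ and tile a neighbourhood of $\partial\Lambda_{11n/16}$ by boxes of size $k$. I then let $W$ be the intersection over these boxes of local uniqueness events $U(k)$ together with the crossing events linking adjacent boxes; this is the usual coarse-graining, as in Remark~\ref{rem: connectivity fk}. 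By Proposition~\ref{prop: supercritical sharpness FK-Ising}, each of these events has conditional probability at least $1-\delta(k)$, uniformly in boundary conditions, with $\delta(k)\to0$. The domain Markov property, applied box by box, then gives $\phi[W\mid \text{outside}]\geq (1-\delta(k))^{C(n/k)^{d-1}}\geq e^{-\varepsilon n^{d-1}/2}$ once $k$ is fixed large enough, uniformly in the configuration and boundary condition seen outside.

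Next I compare the two measures. Conditionally on the configuration $\omega'$ outside $\Lambda_{5n/8}$, the domain Markov property shows that both measures induce on $\omega'$ laws that differ only through the partition function of $\Lambda_{5n/8}$ with the induced boundary condition (free in one case, wired-through-$\Lambda_{n/2}$ in the other). I will bound the ratio of these Radon--Nikodym factors by $e^{Cn^{d-1}\cdot o(1)}$; more concretely, I will sandwich using the event $W$. The argument runs as follows. On $W\cap\{\omega'\text{ outside }\Lambda_{5n/8}\text{ fixed}\}$, the event $\mathcal F_A$ for the free measure coincides with $\mathcal F_A$ for the wired measure. Moreover, $\phi_{\Lambda_n[\beta]}[W\cap \cdot\,]\geq e^{-\varepsilon n^{d-1}/2}\,\phi_{\Lambda_n^\bullet}[\cdot\,]$ on events measurable outside $\Lambda_{5n/8}$. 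To get this, I write both laws as the same outside weight times the probability of the inner boundary condition under the respective inner measures, and use that $W$ is an event of the annulus only, with the uniform lower bound above. Combining these, $\phi_{\Lambda_n[\beta]}[\mathcal F_A]\geq\phi_{\Lambda_n[\beta]}[\mathcal F_A\cap W]\geq e^{-\varepsilon n^{d-1}}\phi_{\Lambda_n^\bullet}[\mathcal F_A]$.

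I expect the main obstacle to be the comparison step. The free and wired inner boundary conditions change the marginal law outside $\Lambda_{5n/8}$ by the ratio of two partition functions, and a naive finite-energy bound costs $e^{Cn^{d-1}}$ with a constant $C$ that is not small. The sub-surface cost $e^{\varepsilon n^{d-1}}$ must come from supercriticality through $W$. I would first try to handle this by writing the wired inner partition function as the free one times $\phi^{\mathrm{free}}_{\Lambda_{5n/8}}[\text{boundary-touching clusters connected}]^{-1}$, up to factors of $2^{\#\text{clusters}}$, and then lower-bounding that probability by the coarse-grained event $W$. If the cluster-count factors cause trouble, I would instead apply Lebowitz's inequality (Proposition~\ref{prop: lebowitz}, in the form of Remark~\ref{rem:twist of lebowitz}), with a field on the merged vertex, to reduce to a comparison of plus-conditioned and free correlations on the annulus.
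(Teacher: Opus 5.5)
\paragraph{The pairing step is fine; the gap is in comparing the two measures.} Your pairing reduction can be made to work. $W$ is measurable with respect to the configuration $\omega'$ outside $\Lambda_{5n/8}$, and every $A$-cluster reaching $\Lambda_{5n/8}$ crosses the annulus. So on $W$ the event $\mathcal F_A$ is determined by $\omega'$ under both measures. Under $\phi_{\Lambda_n^\bullet}$, $\mathcal F_A$ is contained in the $\omega'$-measurable event obtained by merging all $A$-clusters of $\omega'$ that touch $\partial\Lambda_{5n/8}$.

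The genuine gap is the comparison of the two laws of $\omega'$, which is where the entire content of the lemma lies. As written, the inequality $\phi_{\Lambda_n[\beta]}[W\cap\cdot\,]\ge e^{-\varepsilon n^{d-1}/2}\phi_{\Lambda_n^\bullet}[\cdot\,]$ for events measurable outside $\Lambda_{5n/8}$ is false, because $W$ is itself such an event (test it on $W^c$). The two uses of $W$ conflict: the uniform lower bound on $\phi[W\mid\text{outside}]$ needs $W$ unrevealed, while identifying $\mathcal F_A$ needs it revealed.

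What your comparison really needs is a uniform lower bound on a Radon--Nikodym derivative. Write $\phi_{\Lambda_n^\bullet}=\phi_{\Lambda_n[\beta]}[\,\cdot\mid\mathcal E_1]$, where $\mathcal E_1$ is the event that all edges of $\Lambda_{n/2}$ are open. The domain Markov property then gives, for the laws of $\omega'$,
\[
\frac{d\phi_{\Lambda_n[\beta]}}{d\phi_{\Lambda_n^\bullet}}(\omega')=\frac{\phi_{\Lambda_n[\beta]}[\mathcal E_1]}{\phi^{\zeta(\omega')}_{G}[\mathcal E_1]},\qquad G:=\Lambda_{5n/8}[\beta],
\]
where $\zeta(\omega')$ is the induced boundary condition on $\partial\Lambda_{5n/8}$. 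By monotonicity in $\zeta$, you need a bound of the type $\phi^{1}_G[\mathcal E_1]\le e^{\varepsilon n^{d-1}}\phi^{0}_G[\mathcal E_1]$ (wired versus free).

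\paragraph{Why nothing in your argument controls this, and what is missing.} The event $W$ constrains only macroscopic clusters. The a priori $\zeta$-dependence of $\phi^\zeta_G[\mathcal E_1]$ comes from the $\Theta(n^{d-1})$ microscopic features of $\zeta$ (isolated boundary vertices, small boundary clusters). A finite-energy argument bounds each of these only by an $O(1)$ factor. This is exactly the constant $C$ of the naive bound, and exactly the factors $2^{\#\text{clusters}}$ you propose to set aside. In fact, passing to spins one finds
\[
\frac{\phi^{1}_{G}[\mathcal E_1]}{\phi^{0}_{G}[\mathcal E_1]}=\frac{\nu_{G^\bullet}[\sigma\equiv1\text{ on }\partial\Lambda_{5n/8}]}{\nu_{G}[\sigma\equiv1\text{ on }\partial\Lambda_{5n/8}]}=\frac{\sum_{B\subset\partial\Lambda_{5n/8}}\langle\sigma_B\rangle_{G^\bullet}}{\sum_{B\subset\partial\Lambda_{5n/8}}\langle\sigma_B\rangle_{G}},
\]
where $G^\bullet$ is $G$ with $\Lambda_{n/2}$ collapsed. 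So the estimate you need is an averaged form of the lemma itself in a slightly different geometry, and the argument is circular at its key step.

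Your fallbacks do not address this. The probability $\phi^{\mathrm{free}}[\text{all boundary-touching clusters connected}]$ is $e^{-cn^{d-1}}$ with a constant $c$ that cannot be made small (isolated boundary vertices), and $W$ gives no lower bound on it. Lebowitz's inequality (Proposition~\ref{prop: lebowitz}) compares differences of correlations, so by itself it gives no ratio bound at precision $e^{\varepsilon n^{d-1}}$.

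What is missing is a decoupling input for these microscopic boundary effects, which supercritical sharpness (Proposition~\ref{prop: supercritical sharpness FK-Ising}) does not supply. In the proof the paper cites from \cite{DGR20}, one writes $\langle\sigma_A\rangle_{\Lambda_n^\bullet}=\langle\sigma_A\mid\sigma_x=1\ \forall x\in\partial\Lambda_{n/2}\rangle_{\Lambda_n[\beta]}$ and starts from the crude Hamiltonian bound $e^{Cn^{d-1}}$. One then lowers $C$ to $\varepsilon$ by showing that the conditioning at each single point of $\partial\Lambda_{n/2}$ has an arbitrarily small effect on the relevant FK pairing probabilities. That step uses uniqueness of the infinite-volume FK-Ising measure.
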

Let us give a short  heuristic for why \eqref{eq:wwmixing} holds. Note that by the Markov property we have that
\begin{equation*}
\langle{\sigma_A}\rangle_{\Lambda_n^\bullet} = \langle{\sigma_A} \mid \sigma_x=1 \,\forall x\in\partial\Lambda_{n/2} \rangle_{\Lambda_n[\beta]}.
\end{equation*}
 By a straightforward Hamiltonian comparison, we have  $\langle{\sigma_A} \mid \sigma_x=1 \,\forall x\in\partial\Lambda_{n/2} \rangle_{\Lambda_n[\beta]} \leq e^{C n^{d-1}}\langle {\sigma_A}\rangle_{\Lambda_n[\beta]}$ for a large enough constant $C$. The constant $C$ can be turned into a small constant $\varepsilon>0$ by arguing that the effect of the conditioning of each point in $\partial \Lambda_{n/2}$ can be made small. This is proved by using the fact that even correlations can be written in terms of pairing events for the corresponding random cluster measure, and that the infinite volume random cluster measure is unique.

We now prove Proposition~\ref{prop:connectivity_old}, whose proof is similar to that of \cite[Proposition 4.1]{DGR20}.
 
\begin{proof}[Proof of Proposition~\textup{\ref{prop:connectivity_old}}]
If $\Lambda_n(x)\cap\Lambda_n(y)\ne\emptyset$, then the result is straightforward, so we assume that $\Lambda_n(x)\cap\Lambda_n(y)=\emptyset$. 
Consider the graph $\Lambda_N^{\bullet\bullet}$ defined above.
By the switching lemma (see Lemma \ref{lem:switching}), 
\begin{equation}
	\label{eq:collapsed_dc1}
	\mathbf{P}^{S, \emptyset}_{\Lambda_N^{\bullet\bullet}, \Lambda_N^{\bullet\bullet}}[\lr{}{\widehat{\n_1 + \n_2}\:}{{\bf x}}{{\bf y}}]=\frac{\langle{\sigma_{{\bf x}}\sigma_{{\bf y}}\sigma_S}\rangle_{\Lambda_N^{\bullet\bullet}}\,\langle{\sigma_{{\bf x}}\sigma_{{\bf y}}}\rangle_{\Lambda_N^{\bullet\bullet}}}{\langle{\sigma_S}\rangle_{\Lambda_N^{\bullet\bullet}}}\ge \langle{\sigma_{{\bf x}}\sigma_{{\bf y}}}\rangle_{\Lambda_N^{\bullet\bullet}}^2,
\end{equation}
	where in the second inequality we used Griffiths' inequalities (see Proposition~\ref{prop: Griffiths}). By the Edwards--Sokal coupling (see Section \ref{sec:preliminaries}) we have
	$$
    \langle{\sigma_{{\bf x}}\sigma_{{\bf y}}}\rangle_{\Lambda_N^{\bullet\bullet}}=\phi_{\Lambda_N^{\bullet\bullet}}[{\bf x}\longleftrightarrow{\bf y}].
    $$
    Note that we can view $\phi_{\Lambda_N^{\bullet\bullet}}$ as the measure $\phi_{\widetilde{\Lambda_N}[\beta,\beta']}$ with edges in $\Lambda_{n/2}(x)\cup\Lambda_{n/2}(y)$ conditioned to be open (an event we call $\mathcal{E}_1$), and similarly we can view $\phi_{\Lambda_N[\beta']}$ as the measure $\phi_{\widetilde{\Lambda_N}[\beta,\beta']}$ with edges not labelled as $\beta'$ conditioned to be closed (an event we call $\mathcal{E}_2$).
    Thus, applying the FKG inequality twice we obtain
    \begin{align}\label{eq:k}
    \begin{aligned}\langle{\sigma_{{\bf x}}\sigma_{{\bf y}}}\rangle_{\Lambda_N^{\bullet\bullet}}&=\phi_{\widetilde{\Lambda_N}[\beta,\beta']}[\Lambda_{n/2}(x)\longleftrightarrow \Lambda_{n/2}(y)\: |\: \mathcal{E}_1]\\&\ge \phi_{\widetilde{\Lambda_N}[\beta,\beta']}[\Lambda_{n/2}(x)\longleftrightarrow \Lambda_{n/2}(y)]
    \\&\ge \phi_{\widetilde{\Lambda_N}[\beta,\beta']}[\Lambda_{n/2}(x)\longleftrightarrow \Lambda_{n/2}(y)\: |\: \mathcal{E}_2]
    \\&= \phi_{\Lambda_N[\beta']}[\Lambda_{{n/2}}(x)\longleftrightarrow\Lambda_{{n/2}}(y)]\ge 1-\exp(-cn^{d-1}),
    \end{aligned}\end{align}
    for some constant $c>0$,
	where the last inequality follows from \cite{Bod05} as explained in Remark \ref{rem: connectivity fk}.
	
	Now, let us prove that for every $n$ large enough we have that
	\begin{equation}\label{eq:kk}
    \mathbf{P}^{S, \emptyset}_{\widetilde{\Lambda_N}[\beta,\beta'],\widetilde{\Lambda_N}[\beta,\beta']}[\mathcal E]\le e^{c' n^{d-1}}\mathbf{P}^{S, \emptyset}_{\Lambda_N^{\bullet\bullet}, \Lambda_N^{\bullet\bullet}}[\mathcal E]
    \end{equation}
    for some constant $c'<c/2$,
    where $\mathcal{E}=\{\lr{}{\widehat{\n_1 + \n_2}\:}{\Lambda_{n}(x)}{\Lambda_{n}(y)}\}^c$.
	This would be enough to conclude, since we can combine \eqref{eq:k} and \eqref{eq:collapsed_dc1}, and then use \eqref{eq:kk} to obtain that 
	$$\mathbf{P}^{S, \emptyset}_{\widetilde{\Lambda_N}[\beta,\beta'], \widetilde{\Lambda_N}[\beta,\beta']}[\mathcal{E}]\le e^{c' n^{d-1}}\mathbf{P}^{S, \emptyset}_{\Lambda_N^{\bullet\bullet}, \Lambda_N^{\bullet\bullet}}[\mathcal{E}]\le e^{c'n^{d-1}}\mathbf{P}^{S,\emptyset}_{\Lambda_N^{\bullet\bullet},\Lambda_N^{\bullet\bullet}}[\{\lr{}{\widehat{\n_1+\n_2}\:}{\mathbf{x}}{\mathbf{y}}\}^c]\leq {2} e^{-c' n^{d-1}}.$$
We therefore focus on proving \eqref{eq:kk}. Let $\Lambda \coloneqq \Lambda_n(x) \cup \Lambda_n(y)$ and let $E$ denote the set of edges in $\widetilde{\Lambda_N}[\beta,\beta']$ that do not lie in $\tilde{\Lambda}$. Consider a current ${\bf m}\in \mathbb{N}^E$ with $\partial {\bf m}\subset\partial\Lambda$ and define $\mathcal E_{\bf m}$ to be the event
that $\n$ is equal to ${\bf m}$ on $E$. Rewriting the weights we obtain that 
\begin{align*}
\frac{\mathbf{P}^{\emptyset}_{\widetilde{\Lambda_N}[\beta,\beta']}[\mathcal E_{\bf m}]}{\mathbf{P}^{\emptyset}_{\Lambda_N^{\bullet\bullet}}[\mathcal E_{\bf m}]}&=\frac{\langle{\sigma_{\partial{\bf m}}}\rangle_{(\Lambda_{n}(x)\cup\Lambda_{n}(y))[\beta]}}{\langle{\sigma_{\partial{\bf m}}}\rangle_{\Lambda_{n}^\bullet(x)\cup\Lambda_{n}^\bullet(y)}}\,\frac{\displaystyle\sum_{\m' \in  \mathbb{N}^E: \, \partial\m'\subset\partial\Lambda}w_\beta(\m')\langle{\sigma_{\partial\m'}}\rangle_{\Lambda_{n}^\bullet(x)\cup\Lambda_{n}^\bullet(y)}}{\displaystyle\sum_{\m' \in  \mathbb{N}^E: \, \partial\m'\subset\partial\Lambda}w_\beta(\m')\langle{\sigma_{\partial\m'}}\rangle_{(\Lambda_{n}(x)\cup\Lambda_{n}(y))[\beta]}}\\
&\le \max_{\substack{A\subset\partial\Lambda_{n}(x)\\ |A|\text{ even}}}\frac{\langle{\sigma_A}\rangle_{\Lambda_{n}^\bullet(x)}}{\langle{\sigma_A}\rangle_{\Lambda_{n}(x)[\beta]}}\times\max_{\substack{B\subset\partial\Lambda_{n}(y)\\ |B|\text{ even}}}\frac{\langle{\sigma_B}\rangle_{\Lambda_{n}^\bullet(y)}}{\langle{\sigma_B}\rangle_{\Lambda_{n}(y)[\beta]}},
\end{align*}
where we used Griffiths' inequality (see Proposition~\ref{prop: Griffiths}) to argue that $\langle{\sigma_{\partial{\bf m}}}\rangle_{(\Lambda_{n}(x)\cup\Lambda_{n}(y))[\beta]}\leq \langle{\sigma_{\partial{\bf m}}}\rangle_{\Lambda_{n}^\bullet(x)\cup\Lambda_{n}^\bullet(y)}$.
A similar bound can be derived for $\mathbf{P}^{S}_{\Lambda_N[\beta,\beta']}$ and $\mathbf{P}^{S}_{\Lambda_N^{\bullet\bullet}}$ instead of $\mathbf{P}^{\emptyset}_{\Lambda_N[\beta,\beta']}$ and $\mathbf{P}^{\emptyset}_{\Lambda_N^{\bullet\bullet}}$. We deduce, by decomposing on possible values of $(\n_1,\n_2)$ on $E$ that satisfy the event $\mathcal{E}$, 
$$\mathbf{P}^{S, \emptyset}_{\widetilde{\Lambda_N}[\beta,\beta'], \widetilde{\Lambda_N}[\beta,\beta']}[\mathcal E]\le  \Bigg(\max_{\substack{A\subset\partial\Lambda_{n}\\ |A|\text{ even}}}\frac{\langle{\sigma_A}\rangle_{\Lambda_{n}^\bullet}}{\langle{\sigma_A}\rangle_{\Lambda_{n}[\beta]}}\Bigg)^4\ \mathbf{P}^{S, \emptyset}_{\Lambda_N^{\bullet\bullet}, \Lambda_N^{\bullet\bullet}}[\mathcal E],$$
so that \eqref{eq:kk} follows from Lemma~\ref{lem:weak weak mix}.
\end{proof}

To conclude the proof of Proposition~\ref{prop:uniform connectivity}, we show that an arbitrary set of sources can be added to the second current in the statement of Proposition~\ref{prop:connectivity_old}. The proof will use a monotonicity property of random currents with respect to the set of sources first derived in \cite{duminil2021conformal}. We stress that in order apply this monotonicity property, we need to work with annulus-crossing events instead of the box-to-box connection events considered in \cite{DGR20}.

\begin{proof}[Proof of Proposition~\textup{\ref{prop:uniform connectivity}}]
The result holds trivially if $\Lambda_n(x)\cap \partial \Lambda_{N-2n}\neq \emptyset$, so let us assume that $\Lambda_n(x)\cap \partial \Lambda_{N-2n}= \emptyset$.
It follows from Proposition~\ref{prop:connectivity_old} that     
$$\mathbf{P}^{A, \emptyset}_{\widetilde{\Lambda_N}[\beta,\beta'], \widetilde{\Lambda_N}[\beta,\beta']}[\lr{}{\widehat{\n_1 + \n_2}\:}{\Lambda_n(x)}{\partial \Lambda_{N-2n}}] \geq 1 - e^{-cn^{d-1}}\,.$$
To conclude, it therefore suffices to show that 
\begin{equation}\label{eq:current monotonicity}
\mathbf{P}^{A, B}_{\widetilde{\Lambda_N}[\beta,\beta'], \widetilde{\Lambda_N}[\beta,\beta']}[\lr{}{\widehat{\n_1 + \n_2}\:}{\Lambda_n(x)}{\partial \Lambda_{N-2n}}] \geq \mathbf{P}^{A, \emptyset}_{\widetilde{\Lambda_N}[\beta,\beta'], \widetilde{\Lambda_N}[\beta,\beta']}[\lr{}{\widehat{\n_1 + \n_2}\:}{\Lambda_n(x)}{\partial \Lambda_{N-2n}}].
\end{equation}
A similar statement appeared in \cite[Lemma~3.3]{duminil2021conformal}. Let $\mathsf{C}$ denote the union of all clusters that intersect $\Lambda_n(x)$. Consider a set of vertices $C$ (of $\widetilde{\Lambda}_N[\beta,\beta']$) that contains $\Lambda_n(x)$ and does not intersect the annulus $\Lambda_N\setminus\Lambda_{N-2n-1}$ (i.e. a potential realisation of $\mathsf{C}$ under the event $\{\lr{}{\widehat{\n_1 + \n_2}\:}{\Lambda_n(x)}{\partial \Lambda_{N-2n}}\}^c$). Write $\mathbf{C}$ for the set of vertices that lie either in $C$ or are adjacent to an element of $C$. Note that we may view $\mathbf{C}$ as a sub-weighted graph of $\widetilde{\Lambda}_N[\beta,\beta']$. Then, 
\begin{equation}\label{eq:prob cluster}
\mathbf{P}^{A, B}_{\widetilde{\Lambda_N}[\beta,\beta'], \widetilde{\Lambda_N}[\beta,\beta']}[\mathsf{C}=C]=\frac{Z^{\emptyset,\emptyset}_{\mathbf{C},\mathbf{C}}[\mathsf{C}=C]Z^{A}_{\widetilde{\Lambda_N}[\beta,\beta']\setminus \mathbf{C}}Z^{B}_{\widetilde{\Lambda_N}[\beta,\beta']\setminus \mathbf{C}}}{Z^{A}_{\widetilde{\Lambda_N}[\beta,\beta']}Z^{B}_{\widetilde{\Lambda_N}[\beta,\beta']}}.
\end{equation}
Recall that 
\[Z^{B}_{\widetilde{\Lambda_N}[\beta,\beta']}=2^{-|\widetilde{\Lambda_N}[\beta,\beta']|}Z^{\textup{Ising}}_{\widetilde{\Lambda_N}[\beta,\beta']}[\sigma_B] \]
and
\[Z^{B}_{\widetilde{\Lambda_N}[\beta,\beta']\setminus \mathbf{C}}=2^{-|\widetilde{\Lambda_N}[\beta,\beta']|+|\mathbf{C}|}Z^{\textup{Ising}}_{\widetilde{\Lambda_N}[\beta,\beta']\setminus \mathbf{C}}[\sigma_B] .
\] 
We can express $Z^{\textup{Ising}}_{\widetilde{\Lambda_N}[\beta,\beta']}[\sigma_B]$ as $Z^{\textup{Ising}}_{\widetilde{\Lambda_N}[\beta,\beta']\setminus \mathbf{C}}[\sigma_B G(\sigma)]$, where 
\[
G(\sigma)=\sum_{\tau\in \{\pm 1\}^{\mathbf{C}}}\prod_{xy\in E(\mathbf{C})} e^{J_{xy} \tau_x\tau_y}\prod_{xy\in \partial_E \mathbf{C}} e^{J_{xy}\tau_x\sigma_y},
\]
where $J$ is defined as above Figure \ref{fig:doubleedge} and $\partial_E \mathbf{C}$ denotes the edges of $\widetilde{\Lambda_N}[\beta,\beta']$ with one endpoint in $\mathbf{C}$ and one endpoint outside. By Taylor expanding $e^{J_{xy}\tau_x\sigma_y}$ and using Griffiths' inequality (see Proposition~\ref{prop: Griffiths}), we obtain that $\langle \sigma_B G(\sigma)\rangle_{\widetilde{\Lambda_N}[\beta,\beta']\setminus\mathbf{C}}\geq \langle \sigma_B \rangle_{\widetilde{\Lambda_N}[\beta,\beta']\setminus \mathbf{C}}\langle G(\sigma)\rangle_{\widetilde{\Lambda_N}[\beta,\beta']\setminus \mathbf{C}}$. 
Thus,
\begin{align*}
\frac{Z^{B}_{\widetilde{\Lambda_N}[\beta,\beta']}}{Z^{B}_{\widetilde{\Lambda_N}[\beta,\beta']\setminus \mathbf{C}}}&=2^{|\mathbf{C}|}\frac{\langle \sigma_B G(\sigma)\rangle_{\widetilde{\Lambda_N}[\beta,\beta']\setminus \mathbf{C}}}{\langle \sigma_B\rangle_{\widetilde{\Lambda_N}[\beta,\beta']\setminus \mathbf{C}}}\\&\geq 2^{|\mathbf{C}|} \langle G(\sigma)\rangle_{\widetilde{\Lambda_N}[\beta,\beta']\setminus \mathbf{C}}=\frac{Z^{\emptyset}_{\widetilde{\Lambda_N}[\beta,\beta']}}{Z^{\emptyset}_{\widetilde{\Lambda_N}[\beta,\beta']\setminus \mathbf{C}}}.
\end{align*}
Combining the above inequality with \eqref{eq:prob cluster} we deduce that
\[
\mathbf{P}^{A, B}_{\widetilde{\Lambda_N}[\beta,\beta'], \widetilde{\Lambda_N}[\beta,\beta']}[\mathsf{C}=C]\leq \mathbf{P}^{A, \emptyset}_{\widetilde{\Lambda_N}[\beta,\beta'], \widetilde{\Lambda_N}[\beta,\beta']}[\mathsf{C}=C].
\]
Summing over all $C$ that contain $\Lambda_n(x)$ and do not intersect the annulus $\Lambda_N\setminus\Lambda_{N-2n-1}$, and then taking complements, we obtain \eqref{eq:current monotonicity}, which completes the proof.
\end{proof}

\section{Proof of uniqueness via sprinkling}\label{sec:proof uniqueness with sprinkling}

The statement of Proposition~\ref{prop:uniqueness with sprinkling} is almost the same as that of \cite[Proposition 2.2]{SeveroSlabIsing2023}. The only subtle difference that the latter is concerned with the ``uniqueness with sprinkling event''  (namely that every crossing cluster of $\omega$ belongs to the same $(\omega\cup\gamma)$-cluster) which depends on the pair $(\omega, \omega\cup\gamma)$, whereas here we deal with the ``uniqueness without sprinkling event'' $U(L)$ (i.e.~there is exactly one crossing cluster in $\omega\cup\gamma$) which only depends on the (sprinkled) configuration $\omega\cup\gamma$. Below, we deduce the latter from the former via an extra sprinkling and a simple exploration argument.

\begin{proof}[Proof of Proposition~\textup{\ref{prop:uniqueness with sprinkling}}]
    First note that the existence of a cluster crossing $\Lambda_{8 L}\setminus \Lambda_{ L}$ is  implied by the fact that $\omega\in\mathcal{A}_L$, so we only need to focus on the uniqueness part. 

    Enlarge the probability space and write $\gamma=\gamma^1\cup\gamma^2$, with $\gamma^1$ and $\gamma^2$ independent Bernoulli percolations with of parameter $\delta$ given by $1-\varepsilon= (1-\delta)^2$. It follows readily from (the proof of) \cite[Proposition 2.2]{SeveroSlabIsing2023} that 
    \begin{equation}\label{eq:uniqueness with sprinkling}
    	\lim_{L\to\infty} \inf_{\omega\in\mathcal{A}_L(\ell_L)} \mathbb{P}_\varepsilon \left[ \begin{array}{c}
    		\text{all clusters of } \omega \text{ crossing } \Lambda_{4L}\setminus\Lambda_{2L} \\ \text{ are connected to each other in } \omega\cup\gamma^1
    	\end{array} \right] = 1.
    \end{equation}
    
    We now prove that, conditionally on the event in \eqref{eq:uniqueness with sprinkling}, we have $\omega\cup\gamma\in U(L)$ with high probability, thus concluding the proof. Condition on a realization of $\gamma^1$ satisfying the event in \eqref{eq:uniqueness with sprinkling}, and denote by $C_*$ the cluster of $\omega\cup\gamma^1$ containing all the clusters of $\omega$ crossing $\Lambda_{4L}\setminus\Lambda_{2L}$. Fix a vertex $x\in \Lambda_{2L}$ and explore the cluster of $x$ in $\omega\cup\gamma = \omega\cup\gamma^1\cup\gamma^2$ in the standard way: by revealing, at each step, the state of $\gamma^2$ on an edge at the boundary of the current cluster, and adding that edge to the cluster if and only if it is open in $\omega\cup\gamma$. Notice that if the cluster of $x$ crosses the annulus $\Lambda_{4L}\setminus\Lambda_{2L}$, then the exploration reaches each annulus of the form $\Lambda_{2L + i2\ell_L}\setminus\Lambda_{2L + (i-1)2\ell_L}$, for $i\in\{1,2,\ldots, \lfloor L/\ell_L \rfloor$\}. Since $\omega\in\mathcal{A}_L(\ell_L)$, each time the exploration reaches such an annulus for the first time, it is at distance at most $2\ell_L$ from $C_*$, and therefore gets connected to $C_*$ within this annulus with probability at least $\delta^{2\ell_L}$. As a result, since $\ell_L=o(\log L)$, the probability that the cluster of $x$ crosses $\Lambda_{4L}\setminus\Lambda_{2L}$ without being connected to $C_*$ is at most $(1-\delta^{2\ell_L})^{\lfloor L/\ell_L \rfloor} \leq e^{-cL^{1-o(1)}}$. By a union bound in $x$, this yields that $\omega\cup\gamma\in U(L)$ with high probability, thus concluding the proof. 
\end{proof}

\section{Proof of the corollaries}\label{sec:corollaries}

\begin{proof}[Proof of Corollary~\textup{\ref{coro: truncated}}] Let $\beta>\beta_c$ and $n\geq 1$.
Recall the coarse-grained percolation model $\eta^L$ constructed below Theorem~\ref{thm:unique_currents} and import all the notations from there. We fix $L$ large enough so that $\eta^L$ stochastically dominates a Bernoulli site percolation with parameter $p$, which in turn is chosen close enough to $1$ (namely $p>1-p_c^{\textup{site}}(\Z^d)$) so that the existence of closed paths decays exponentially fast in the distance.

$\bullet$ \textbf{Step 1.} We start by proving \eqref{eq:truncated} for $A$ of even cardinality. 
Let $N>2n$ to be taken large enough. By Corollary~\ref{coro:expression truncated switching} we have
 \begin{equation}\label{eq:truncated proof}
     \langle\sigma_A;\sigma_B\rangle_{\Lambda_N[\beta]}^+
     = \langle\sigma_A\sigma_B\rangle_{\Lambda_N[\beta]}^+ \mathbf{P}^{A\cup B, \emptyset}_{\Lambda_N^\fg[\beta],\Lambda_N^\fg[\beta]}[\mathcal{F}_A^{\mathrm{c}}].
 \end{equation}
Due to the sources constraint and the parity of $|A|$, the event $\mathcal{F}_A^{\mathrm{c}}$ implies the existence of two disjoint clusters connecting $A$ and $B$, thus in particular crossing the annulus $\Lambda_{2n}\setminus\Lambda_n$. However, this cannot happen if there exists a blocking surface (i.e.~a cutset) $\eta^L$-open sites separating the inner and outer boundaries of the annulus. By the aforementioned comparison with Bernoulli percolation, the latter event holds with probability at least $1-e^{-cn}$, thus concluding the proof of \eqref{eq:truncated}.

$\bullet$ \textbf{Step 2.} We now turn to \eqref{eq:truncated_pt_to_pt}. Fix $N>|x|$, so that $x\in\Lambda_N$. We have
 \begin{equation}\label{eq:truncated_proof}
     \langle\sigma_0;\sigma_x\rangle^+_{\Lambda_N[\beta]} = \frac{Z^{\{0,x\}}_{\Lambda_N^\fg[\beta]} Z^\emptyset_{\Lambda_N^\fg[\beta]} - Z^{\{0,\fg\}}_{\Lambda_N^\fg[\beta]} Z^{\{x,\fg\}}_{\Lambda_N^\fg[\beta]}}{(Z^\emptyset_{\Lambda_N^\fg[\beta]})^2} = \langle\sigma_0\sigma_x\rangle^+_{\Lambda_N[\beta]} \mathbf{P}^{\{0,x\}, \emptyset}_{\Lambda_N^\fg[\beta],\Lambda_N^\fg[\beta]}[\nlr{}{}{0}{\fg}],
 \end{equation}
where the last equality follows readily from Lemma~\ref{lem:switching}. 
Due to the comparison with a highly supercritical Bernoulli percolation, with probability at least $1-e^{-c|x|}$, there exists an $\eta^L$-open blocking surface separating\footnote{Recall the definition, for a set $S\subset \mathbb Z^d$, of $S_L=\{z\in\Lambda_{\tilde{N}}:~\Lambda_{10 L}(Lz)\cap S\neq\emptyset\}$, introduced below Theorem~\ref{thm:unique_currents}.}  $\{0\}_L$ and $\{x\}_L$,
which in turn is also connected to $\partial \Lambda_{\tilde{N}}$ (where we recall that $\tilde{N}=\lfloor N/L\rfloor-10$) in at least $c \tilde{N}^{d-1}\gg |x|$ many points for some $c>0$. Conditionally on this event, a simple insertion tolerance argument gives that the $\eta$-cluster induced by this $\eta^L$-cluster is connected to $\fg$ with probability at least $1-e^{-c|x|}$. However, on this event, any path from $0$ to $x$ in $\eta$ (which exists due to the source constraint) must be connected to $\fg$. This yields that $\mathbf{P}^{\{0,x\}, \emptyset}_{\Lambda_N^\fg[\beta],\Lambda_N^\fg[\beta]}[\nlr{}{}{0}{\fg}] \leq e^{-c|x|}$, concluding the proof of \eqref{eq:truncated_pt_to_pt}.

$\bullet$ \textbf{Step 3.} Finally, we use Steps 1 and 2 to prove the exponential ratio mixing for $A$ of odd cardinality. Fix $c=c(\beta,d)>0$ which satisfies the conclusions of Steps 1 and 2. Let $A\subset \Lambda_n$ of odd cardinality, $x\in A$, and $B\subset \mathbb{Z}^d\setminus \Lambda_{2n}$. By the Lebowitz inequality (see Proposition \ref{prop: lebowitz} and Remark \ref{rem:twist of lebowitz}),
\[
\langle \sigma_A \sigma_B |\: \sigma_x=1\rangle^+_{\Lambda_N[\beta]} - \langle \sigma_A \sigma_B \rangle^+_{\Lambda_N[\beta]}\geq \Big|\langle \sigma_A |\: \sigma_x=1\rangle^+_{\Lambda_N[\beta]}\langle \sigma_B \rangle^+_{\Lambda_N[\beta]}-\langle \sigma_B |\: \sigma_x=1\rangle^+_{\Lambda_N[\beta]}\langle \sigma_A \rangle^+_{\Lambda_N[\beta]}\Big|.
\]
Writing $\mathbbm{1}_{\sigma_x=1}=\frac{\sigma_x+1}{2}$ and multiplying both sides of the above equation by $2\langle \mathds{1}_{\sigma_x=1}\rangle^+_{\Lambda_N[\beta]}$ yields
\begin{equation}\label{eq: Lebowitz}
\langle \sigma_A \sigma_B \sigma_x\rangle^+_{\Lambda_N[\beta]} - \langle \sigma_A \sigma_B \rangle^+_{\Lambda_N[\beta]}\langle \sigma_x \rangle^+_{\Lambda_N[\beta]}\geq \Big|\langle \sigma_A  \sigma_x\rangle^+_{\Lambda_N[\beta]}\langle \sigma_B \rangle^+_{\Lambda_N[\beta]}-\langle \sigma_B \sigma_x\rangle^+_{\Lambda_N[\beta]}\langle \sigma_A \rangle^+_{\Lambda_N[\beta]}\Big|.
\end{equation}
Rearranging and using that $\sigma_x^2=1$ (so that $\sigma_A\sigma_x=\sigma_{A\setminus \{x\}}$ since $x\in A$) we obtain
\[
\langle \sigma_A \sigma_B \rangle^+_{\Lambda_N[\beta]}\langle \sigma_x \rangle^+_{\Lambda_N[\beta]}\leq \langle \sigma_{A\setminus \{x\}}\sigma_B\rangle^+_{\Lambda_N[\beta]}-\langle \sigma_{A\setminus\{x\}}\rangle^+_{\Lambda_N[\beta]}\langle \sigma_B \rangle^+_{\Lambda_N[\beta]}+\langle \sigma_B \sigma_x\rangle^+_{\Lambda_N[\beta]}\langle \sigma_A \rangle^+_{\Lambda_N[\beta]}.
\]
Now, since $A\setminus \{x\}$ is even, by Step 1, 
\[
\langle \sigma_{A\setminus \{x\}}\sigma_B\rangle^+_{\Lambda_N[\beta]}-\langle \sigma_{A\setminus\{x\}}\rangle^+_{\Lambda_N[\beta]}\langle \sigma_B \rangle^+_{\Lambda_N[\beta]}\leq e^{-cn}\langle \sigma_{A\setminus\{x\}}\rangle^+_{\Lambda_N[\beta]}\langle \sigma_B \rangle^+_{\Lambda_N[\beta]}\leq e^{-cn}\frac{\langle \sigma_A\rangle^+_{\Lambda_N[\beta]}\langle \sigma_B \rangle^+_{\Lambda_N[\beta]}}{\langle \sigma_x\rangle^+_{\Lambda_N[\beta]}},
\]
where we used Griffiths' inequality (see Proposition \ref{prop: Griffiths}) in the last inequality. It remains to prove that 
\begin{equation}\label{eq: target}
\langle \sigma_B \sigma_x\rangle^+_{\Lambda_N[\beta]}\leq (1+e^{-cn})\langle \sigma_B \rangle^+_{\Lambda_N[\beta]}\langle \sigma_x\rangle^+_{\Lambda_N[\beta]},
\end{equation}
from which the desired result follows (with potentially a different constant $c$). To this end, take $y\in \Lambda_n$ such that $\lVert y-x \rVert \geq n$ and apply \eqref{eq: Lebowitz} for $A=\{y\}$. By rearranging, 
\begin{equation}\label{eq:last equ proof trunc}
\langle \sigma_B \sigma_x\rangle^+_{\Lambda_N[\beta]}\langle \sigma_y  \rangle^+_{\Lambda_N[\beta]} \leq \langle \sigma_x \sigma_y \sigma_B \rangle^+_{\Lambda_N[\beta]} - \langle \sigma_y \sigma_B \rangle^+_{\Lambda_N[\beta]}\langle \sigma_x \rangle^+_{\Lambda_N[\beta]}+\langle \sigma_x  \sigma_y\rangle^+_{\Lambda_N[\beta]}\langle \sigma_B \rangle^+_{\Lambda_N[\beta]}.
\end{equation}
By  Griffiths' inequality, the exponential ratio mixing we proved above for $A$ of even cardinality in Step 1, and the exponential decay of truncated $2$-point functions from Step 2, 
\begin{align*}
\langle \sigma_x \sigma_y \sigma_B \rangle^+_{\Lambda_N[\beta]} &- \langle \sigma_y \sigma_B \rangle^+_{\Lambda_N[\beta]}\langle \sigma_x \rangle^+_{\Lambda_N[\beta]}
\\&\leq 
\langle \sigma_x \sigma_y \sigma_B \rangle^+_{\Lambda_N[\beta]} - \langle \sigma_y  \rangle^+_{\Lambda_N[\beta]}\langle \sigma_B \rangle^+_{\Lambda_N[\beta]}\langle \sigma_x \rangle^+_{\Lambda_N[\beta]}
\\ 
&\leq \Big(\langle \sigma_x\sigma_y\rangle_{\Lambda_N[\beta]}^+-\langle \sigma_y  \rangle^+_{\Lambda_N[\beta]}\langle \sigma_x \rangle^+_{\Lambda_N[\beta]}\Big)\langle \sigma_B \rangle^+_{\Lambda_N[\beta]}+e^{-cn}\langle \sigma_x\sigma_y\rangle_{\Lambda_N[\beta]}^+\langle \sigma_B\rangle_{\Lambda_N[\beta]}^+
\\
&\leq 3e^{-cn}\langle \sigma_x \rangle^+_{\Lambda_N[\beta]}\langle  \sigma_y \rangle^+_{\Lambda_N[\beta]}\langle \sigma_B \rangle^+_{\Lambda_N[\beta]}.
\end{align*}
Using again Step 2, we obtain 
\[
\langle \sigma_x  \sigma_y\rangle^+_{\Lambda_N[\beta]}\langle \sigma_B \rangle^+_{\Lambda_N[\beta]}\leq (1+e^{-cn})\langle \sigma_x \rangle^+_{\Lambda_N[\beta]}\langle  \sigma_y \rangle^+_{\Lambda_N[\beta]}\langle \sigma_B \rangle^+_{\Lambda_N[\beta]}.
\]
Combining the two previous equations and \eqref{eq:last equ proof trunc} yields \eqref{eq: target}, thus completing the proof.
\end{proof}

Below we obtain exponential mixing for the Ising model in the supercritical regime by using the exponential decay of the truncated 2-point function, the FKG inequality (see Proposition \ref{prop:FKG}), and the classical observation \eqref{eq: increasing}---see e.g.\ \cite[Proof of Theorem 3.28]{FriedliVelenikIntroStatMech2017}.

\begin{proof}[Proof of Corollary~\textup{\ref{cor: ising mixing}}] Let $\beta>\beta_c$ and denote by $\textup{Cov}(\cdot,\cdot)$ the covariance under $\nu^+_\beta=\langle \cdot\rangle^+_\beta$.
It suffices to prove the desired result for events depending on finitely many vertices.
Let $A$ be an event measurable with respect to $\Lambda_n$ and $B$ an event measurable with respect to a finite subset $S$ of $\mathbb{Z}^d\setminus \Lambda_{2n}$. Observe that the functions 
\begin{equation}\label{eq: increasing}
\pm\mathbbm{1}_A +\frac{1}{2}\sum_{x\in \Lambda_n} \sigma_x, \quad \pm\mathbbm{1}_B+\frac{1}{2}\sum_{x\in S} \sigma_x
\end{equation}
are increasing.
By the FKG inequality,
\begin{align*}
0&\leq \textup{Cov}\Big(\mathbbm{1}_A+\frac{1}{2}\sum_{x\in \Lambda_n} \sigma_x , -\mathbbm{1}_B+\frac{1}{2}\sum_{x\in S} \sigma_x\Big)+ \textup{Cov}\Big(- \mathbbm{1}_A+\frac{1}{2}\sum_{x\in \Lambda_n} \sigma_x, \mathbbm{1}_B+\frac{1}{2}\sum_{x\in S} \sigma_x\Big)\\
&=2\textup{Cov}\Big(\frac{1}{2}\sum_{x\in \Lambda_n} \sigma_x,\frac{1}{2}\sum_{x\in S} \sigma_x\Big)-2\textup{Cov}(\mathbbm{1}_A,\mathbbm{1}_B),
\end{align*}
and 
\begin{align*}
0&\leq \textup{Cov}\Big(\mathbbm{1}_A+\frac{1}{2}\sum_{x\in \Lambda_n} \sigma_x ,\mathbbm{1}_B+ \frac{1}{2}\sum_{x\in S} \sigma_x\Big)+ \textup{Cov}\Big(- \mathbbm{1}_A+\frac{1}{2}\sum_{x\in \Lambda_n} \sigma_x, - \mathbbm{1}_B+\frac{1}{2}\sum_{x\in S} \sigma_x\Big)\\
&=2\textup{Cov}\Big(\frac{1}{2}\sum_{x\in \Lambda_n} \sigma_x,\frac{1}{2}\sum_{x\in S} \sigma_x\Big)+2\textup{Cov}(\mathbbm{1}_A,\mathbbm{1}_B).
\end{align*}
Combining the above displayed equations, we obtain 
\begin{equation*}
|\textup{Cov}(\mathbbm{1}_A,\mathbbm{1}_B)|\leq \frac{1}{4}\textup{Cov}\Big(\sum_{x\in \Lambda_n} \sigma_x,\sum_{x\in S} \sigma_x\Big),    
\end{equation*}
from which it is possible to conclude thanks to Corollary \ref{coro: truncated}.
\end{proof}

\begin{proof}[Proof of Corollary~\textup{\ref{cor: perco ratio mixing}}]
We first prove the result for random currents and for the random cluster measure. Fix $N>2n$. Let $\m_{in}\in \N^{E(\Lambda_n)}$ and $\m_{out}\in \N^{E(\Lambda_N)\setminus E(\Lambda_{2n})}$ be arbitrary current configurations with sources $A:=\partial \m_{in}\subset \partial\Lambda_n$ and $B:=\partial \m_{out}\subset \partial\Lambda_{2n}$. Below, we write $\mathbf P=\mathbf{P}^\emptyset_{\Lambda_N[\beta]}$ and abuse slightly notation by writing $Z^A_E=Z^A_G$ for a weighted graph $G=(V,E,J)$. Note that
\begin{align}
    \label{eq:mix_current_1}
    &\frac {\mathbf{P}[\n_{|E(\Lambda_n)}=\m_{in} ,\, \n_{|E(\Lambda_N)\setminus E(\Lambda_{2n})}=\m_{out}]}{\mathbf{P}[\n_{|E(\Lambda_n)}=\m_{in}]\mathbf{P}[\n_{|E(\Lambda_N)\setminus E(\Lambda_{2n})}=\m_{out}]} 
    = \frac{Z^{A\cup B}_{E(\Lambda_{2n})\setminus E(\Lambda_n)} Z^\emptyset_{\Lambda_N[\beta]}}{Z^{A}_{E(\Lambda_N)\setminus E(\Lambda_n)} Z^{B}_{\Lambda_{2n}[\beta]}} \\
    \label{eq:mix_current_2}
    &~~~~~~~~~~~~~~~~~~~~~~~~~~~~~~~~~~~~~~= \frac{ \sum_{\tilde \m_{in},\, \tilde\m_{out}} \, w(\tilde\m_{in})w(\tilde\m_{out}) Z_{E(\Lambda_{2n})\setminus E(\Lambda_n)}^{A\cup B} Z_{E(\Lambda_{2n})\setminus E(\Lambda_n)}^{\tilde A\cup \tilde B} }{ \sum_{\tilde \m_{in},\, \tilde\m_{out}} \, w(\tilde\m_{in})w(\tilde\m_{out}) Z_{E(\Lambda_{2n})\setminus E(\Lambda_n)}^{A\cup \tilde B} Z_{E(\Lambda_{2n})\setminus E(\Lambda_n)}^{\tilde A\cup B} },
\end{align}
where in the last line we decomposed $Z^\emptyset_{\Lambda_N[\beta]}$,  $Z^{A}_{E(\Lambda_N)\setminus E(\Lambda_n)}$ and $Z^{B}_{\Lambda_{2n}[\beta]}$ according to the restrictions $\tilde \m_{in}$ and $\tilde\m_{out}$ of the currents to
$E(\Lambda_n)$ and $E(\Lambda_N)\setminus E(\Lambda_{2n})$, with sources $\tilde A:=\partial \tilde\m_{in}\subset \partial\Lambda_n$ and $\tilde B:=\partial \tilde \m_{out}\subset \partial\Lambda_{2n}$. By the switching lemma and the proof of Corollary~\ref{coro: truncated} (see also Remark~\ref{rem: free even mixing}), we have
\begin{equation*}
    0\leq \frac{Z_{E(\Lambda_{2n})\setminus E(\Lambda_n)}^{A\cup B}Z_{E(\Lambda_{2n})\setminus E(\Lambda_n)}^{\emptyset}}{Z_{E(\Lambda_{2n}) \setminus E(\Lambda_n)}^{A} Z_{E(\Lambda_{2n})\setminus E(\Lambda_n)}^{ B}} -1 
    = \frac{ \langle\sigma_A;\sigma_B\rangle_{ E(\Lambda_{2n})\setminus E(\Lambda_n)} }{\langle\sigma_A\rangle_{E(\Lambda_{2n})\setminus E(\Lambda_n)} \langle\sigma_B\rangle_{E(\Lambda_{2n})\setminus E(\Lambda_n)} } \leq e^{-cn},
\end{equation*}
uniformly on the even sets $A\subset \partial\Lambda_n$ and $B\subset\partial\Lambda_{2n}$. In particular, we have
\begin{equation*}
    \left| \frac{Z_{E(\Lambda_{2n})\setminus E(\Lambda_n)}^{A\cup B} Z_{E(\Lambda_{2n}) \setminus E(\Lambda_n)}^{\tilde A\cup \tilde B}}{Z_{E(\Lambda_{2n})\setminus E(\Lambda_n)}^{A\cup \tilde B} Z_{E(\Lambda_{2n})\setminus E(\Lambda_n)}^{\tilde A\cup B}} - 1 \right| \leq e^{-cn}.
\end{equation*}
Plugging this bound into \eqref{eq:mix_current_2}, we conclude that 
\begin{equation*}
    \left| \frac{\mathbf{P}[\n_{|E(\Lambda_n)}=\m_{in} ,\, \n_{|E(\Lambda_N)\setminus E(\Lambda_{2n})}=\m_{out}]}{\mathbf{P}[\n_{|E(\Lambda_n)}=\m_{in}]\mathbf{P}[\n_{|E(\Lambda_N)\setminus E(\Lambda_{2n})}=\m_{out}]}  - 1 \right| \leq e^{-cn}.
\end{equation*}
Exponential ratio mixing follows for $\mathbf{P}=\mathbf{P}^\emptyset_{\Lambda_N[\beta]}$ by summing over $\m_{in}\in E$ and $\m_{out}\in F$, and then for $\mathbf{P}_\beta$ by taking $N\to\infty$.

Since $\phi_\beta$ is a Bernoulli sprinkling of $\mathbf{P}_\beta$ (see e.g.~\cite{LupuWerner2016note,AizenmanDuminilTassionWarzelEmergentPlanarity2019}), exponential ratio mixing for the former follows readily from the exponential ratio mixing of the latter.
\end{proof}

\bibliographystyle{alpha}
\bibliography{biblio}
\end{document}